\documentclass[a4paper,draft]{ZbRad}
\usepackage{amssymb}
\usepackage{amscd} 

\theoremstyle{plain}
 \newtheorem{thm}{Theorem}[section]
 \newtheorem{prop}{Proposition}[section]
 \newtheorem{lem}{Lemma}[section]
 \newtheorem{cor}{Corollary}[section]
\theoremstyle{definition}
 \newtheorem{exm}{Example}[section]
 \newtheorem{rem}{Remark}[section]
 \newtheorem{dfn}{Definition}[section]
\numberwithin{equation}{section}

\renewcommand{\le}{\leqslant}
\renewcommand{\ge}{\geqslant}
\renewcommand{\setminus}{\smallsetminus}
\newcommand{\mathset}[1]{{\left\{#1\right\}}}
\newcommand{\absolute}[1]{\left\lvert#1\right\rvert}
\newcommand{\norm}[1]{\left\|#1\right\|}

\DeclareMathOperator{\GL}{GL}
\DeclareMathOperator{\Star}{Star}
\DeclareMathOperator{\kNNh}{\it{k-NN_h}}
\DeclareMathOperator{\height}{ht}
\DeclareMathOperator{\supp}{supp}

\DeclareMathOperator{\closure}{cl}

\newfont{\TITf}{cmssdc10 scaled 1440}

\title[]{Boundary value problems on $p$-adic analytic manifolds}

\author[]{Patrick Erik Bradley}

\begin{document}

\setcounter{page}{1}

\vspace*{40mm}

\thispagestyle{empty}

{
\TITf\setlength{\parskip}{\smallskipamount}

\begin{center}
Patrick Erik Bradley

\bigskip\bigskip\bigskip

{Boundary Value Problems on $p$-Adic Analytic Manifolds}

\end{center}
}

\vspace{6em}{\leftskip3em\rightskip3em
\emph{Abstract}.
An account is given on newest developments on $p$-adic boundary value problems on $p$-adic analytic manifolds and their relationship with diffusion. In particular, novel coordinate Laplacians on $p$-adic analytic $n$-manifolds  constructed with the help of frame bundles, are introduced in this context. These are used to construct elliptic  operators. Related Dirichlet problems are formulated and solved, generalising results on compact subdomains of $p$-adic $n$-space.
In the end, an outlook towards  number-theoretic applications as  well as extensions of this theory to ultrametric analytic manifolds is given. This is a substantial upgrade of the presentation given at Branko's 80-th Birthday Conference in Belgrade, May 2025.

\bigskip\emph{Mathematics Subject Classification} (2010):  
Primary: 35J05; Secondary: 58J32


\bigskip\emph{Keywords}: $p$-adic elliptic operators, $p$-adic analytic manifolds, boundary value problems, elliptic operators, Dirichlet Problem}

\newpage\thispagestyle{empty}

\maketitle
\tableofcontents

\section{Introduction}  

Stochastic processes on $p$-adic domains have been enabled via the Vladimirov operator on $p$-adic number fields and their finite products, and are also known as the Vladimirov-Taibleson operator, cf.\ \cite{VVZ1994,RZ2008}. Further developments on $p$-adic stochastic processes can be found e.g.\ in \cite{Kochubei2001,Zuniga2025}.
Diffusion on other $p$-adic domains has been studied e.g.\ in \cite{PRSW2024} ($p$-adic integers), \cite{PW2025} ($p$-adic vector spaces), 
\cite{RodriguezDiss,Rodriguez2025} (certain $p$-adic Lie groups).
Diffusion on compact $p$-adic analytic manifolds using an atlas is first defined in \cite{DiffMfp}, and applied to hearing their Serre invariant \cite{HearingSerre}. 
\newline

Dirichlet problems on $p$-adic number fields are studied in 
\cite{Kochubei2023}. Elliptic operators on compact subdomains of such are developed, and associated boundary value problems are studied in \cite{ellipticBVP}. Ongoing work also aims at extending the results here to ultrametric manifolds, in order to generalise \cite{VPZeta}.
\newline

The theory presented here is developped in more detail in the ongoing habilitation project \cite{diff_AMf_p}.

\section{Diffusion on $p$-adic analytic manifolds}

\subsection{Basic constructions}

Let $K$ be a $p$-adic number field, i.e.\ a finite extension of the field $\mathbb{Q}_p$ of $p$-adic numbers. The absolute value on $K$ is written here as $\absolute{\cdot}$, and the maximum norm on the vector space $K^n$ is
\[
\norm{x}_{K^n}=\max\mathset{\absolute{x_1},\dots,\absolute{x_n}}
\]
with $x=(x_1,\dots,x_n)\in K^n$.
The ring of integers of $K$ is denoted as $O_K$, and an element $\pi\in O_K$ with
\[
\absolute{\pi}=1
\]
is a \emph{uniformiser} of $K$.
For those unexperienced with such a general $p$-adic field $K$, it suffices to think of $K=\mathbb{Q}_p$. In this case, $O_K=\mathbb{Z}_p$, i.e.\ the ring of $p$-adic integers, and take $p=\pi$ in this case.
\newline

The Haar measure on $K^n$ is denoted as $\mu$, or as $\absolute{dx}$, where the latter is usually used in the context of integration, e.g.\ as
\[
\int_Kf(x)\,\absolute{dx}=\int_Kf(x)\,d\mu(x)=\int_Kf(x_1,\dots,x_n)\,\absolute{dx_1}\wedge\dots\wedge\absolute{dx_n}
\]
using the product Haar measure. It is normalised as
\[
\mu(O_K^n)=1\,,
\]
and, more precisely, 
\[
\mu_i(O_K)=1
\]
for the component measures of
\[
\mu=\absolute{dx_1}\wedge\dots\wedge\absolute{dx_n}
\]
on each copy of $K$ making up the vector space $K^n$.
\newline

The space $\mathcal{D}(X)$ on a topological space $X$ is defined as the set of functions $X\to\mathbb{C}$ which are locally constant with compact support.

\begin{dfn}
A function $F\colon U\to K^n$, where $U\subseteq K^n$ is an open subset,
is called $K$-analytic, if it is locally given by a convergent power series with $n$
variables and coeﬃcients in $K$.
\end{dfn}

An \emph{atlas} $\mathcal{A}$ on a Hausdorff space $X$ is a collection of compatible charts on $X$. The latter is a pair $(U,\phi)$, where $U\subseteq X$
is open, and $\phi\colon U\to K^n$ is a homeomorphism onto its image. The compatibility property means that for each pair of charts $(U_\alpha,\phi_\alpha)$, $(U_\beta,\phi_\beta)\in\mathcal{A}$, it
holds true that the transition function
\[
\tau_{\alpha\beta}\colon\phi_\alpha(U_\alpha\cap U_\beta)\to\phi_\beta(U_\alpha\cap U_\beta),\; x\mapsto \phi_\beta(\phi^{-1}_\alpha(x))
\]
is one-to-one, and its inverse $\tau_{\alpha\beta}^{-1}$ coincides with the transition function $\tau_{\beta\alpha}$.
In the case of an analytic manifold, the transition maps (and their inverses) are
all locally bi-analytic maps. In other words, they are for $x \in U_\alpha\cap U_\beta$ given in
a small neighbourhood of $\phi_\alpha(x)\in\phi_\alpha(U_\alpha\cap U_\beta)$ as convergent power series
in $n$ variables with coeﬃcients in $K$.

\begin{dfn}
A \emph{$p$-adic analytic manifold} is a pair $(X,\mathcal{A})$, where $X$ is a
Hausdorff topological space, and $\mathcal{A}$ an atlas on $X$ such that all transition maps
between overlapping charts are $K$-analytic functions. Its dimension is $n\in\mathbb{N}$, if
for all charts $(U,\phi)\in\mathcal{A}$, the map $\phi$ takes $U$ to $K^n$. In this case, $(X,\mathcal{A})$ is also called a \emph{$p$-adic analytic $n$-manifold}.
\end{dfn}

Two atlantes $\mathcal{A}$ and $\mathcal{B}$ on $X$ are \emph{equivalent}, if their union $\mathcal{A}\cup\mathcal{B}$ is again
an atlas on $X$. This is readily seen to be an equivalence relation on the set of
all atlantes of $X$. An equivalence class of a given atlas $\mathcal{A}$ is called a \emph{$p$-adic
analytic structure} on $X$. From the books \cite{Schneider2011,Serre1992}, one can learn more about about
$p$-adic analytic manifolds. Here, it is of interest to identify in a set $X$ with a
given $p$-adic analytic structure $[\mathcal{A}]$ a certain type of atlantes within $[\mathcal{A}]$.

\begin{rem}
It is assumed that $p$-adic analytic manifolds are paracompact, as then they are Polish spaces, and thus amenable to stochastic processes.
\end{rem}

Given an analytic differential $n$-form $\omega$ on a $p$-adic analytic $n$-manifold $(X,\mathcal{A})$, one obtains in a natural way a measure on $X\setminus V(\omega)$, where $V(\omega)\subset X$ is the vanishing locus of $\omega$. This construction is explained in \cite[Chapter 7.4]{Igusa2002} and in \cite[Chapter 2.2]{Weil1982}, and goes as follows: locally, on a chart $(U,\phi)$, the differential form is given as
\[
\omega|_U=\phi_*\omega=f_U\,dx=f_U\,dx_1\wedge\dots\wedge dx_n\,,
\]
where $f_U\colon \phi(U)\to K$ is a $K$-analytic function. Then
\[
\mu_{\omega|_U}(A)=\int_{\phi(U)}\absolute{f_U(x)}\,\absolute{dx}=\int_{\phi(U)}\absolute{f_U(x_1,\dots,x_n)}\absolute{dx_1}\wedge\dots\wedge\absolute{dx_n}\,,
\]
where $A\subset U$ is a Borel set, and 
$\absolute{dx_i}$ is the normalised Haar measure on $K$, and
\[
\absolute{dx}=\absolute{dx_1}\wedge\dots\wedge\absolute{dx_n}
\]
is the product Haar measure on $K^n$. These local measures glue together to a measure $\mu_\omega=\absolute{\omega}$ on $X$, such that
\[
\mu_\omega(A)=\int_X\absolute{\omega}=\int_{X}d\mu_\omega(x)
\]
for $A\subset X$ a Borel set. That this construction is well-defined can be shown via the transformation rule for integration which reflects in absolute value the transformation rule for differential forms.

\subsection{Tangent Bundle on a $p$-Adic Manifold}

The brief construction of the tangent bundle on a real manifold, as can be found e.g.\ in \cite[Chapter 1.1]{Taira1998}, carries over to the $p$-adic case as follows:
\newline

Let $(X,\mathcal{A})$ be a $p$-adic analytic manifold. The following relation is an equivalence relation: the set consists of triples $(U,\phi,v)$ with $(U,\phi)\in\mathcal{A}$, and $v\in K^n$, and the relation $(U,\phi,v)\sim(V,\psi,w)$ has the meaning
\[
(\psi\circ\phi^{-1})'(\phi(x))(v)=w\,,
\]
where $(\psi\circ\phi^{-1})'$ is the derivative of 
\[
\psi\circ\phi^{-1}\colon \phi(U\cap V)\to\psi(U\cap V)\,,
\]
which is a $K$-analytic map for the overlap $U\cap V$ of the charts $(U,\phi),(V,\psi)$ in their purpose of being local neighbourhoods of a fixed point $x\in X$.
The \emph{tangent space} $T_x(X)$ in $x$ is the set of equivalence classes for this relation $\sim$.
It can be endowed with the structure of a $K$-vector space by assigning a class $[(U,\phi,v)]$ to
$v\in K^n$, and then transporting the vector space structure on $K^n$ via this in fact well-defined bijection.
\newline

Just like in the real case, the tangent bundle is defined via the individual tangent spaces $T_x(X)$ of a $p$-adic analytic $n$-manifold $(X,\mathcal{A}(X))$, whose atlas is denoted here as $\mathcal{A}(X)$, as follows:
\[
TX=T(X)=\bigsqcup\limits_{x\in X}T_x(X)
\]
with the projection map
\[
\pi_X\colon T(X)\to X,\;T_x(X)\ni\bar{v}\mapsto x\,.
\]
Let $(U_\alpha,\phi_\alpha)\in\mathcal{A}(X)$. Then define
\[
\tau_\alpha\colon\pi^{-1}_X(U_\alpha)\to\phi_\alpha(U_\alpha)\times
K^n,\;T_x(X)\ni\bar{v}\mapsto(\phi_\alpha(x),v)
\]
with $x\in U_\alpha$, 
and transition map
\[
\tau_\alpha\circ\tau_\beta^{-1}\colon
\phi_\alpha(U_\alpha\cap U_\beta)\times K^n\to\phi_\beta(U_\alpha\cap U_\beta)\times K^n
\]
via
\[
(\phi_\alpha(x),v)\mapsto\left(\phi_\beta(x),\left(\phi_\beta\circ\phi_\alpha^{-1}\right)'(\phi_\alpha(x))v\right)
\]
for $x\in U_\alpha\cap U_\beta$, $v\in K^n$. 

\begin{lem}
The set $T(X)$ together with the atlas $\mathcal{A}(T(X))$ consisting of the charts $(\pi^{-1}(U_\alpha),\tau_\alpha)$, where $(U_\alpha,\phi_\alpha)\in\mathcal{A}(X)$, is a 
$p$-adic analytic $2n$-manifold.
\end{lem}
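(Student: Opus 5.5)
The plan is the standard one from the real case: put a topology on $T(X)$ that makes each $\tau_\alpha$ a homeomorphism onto an open subset of $K^{2n}=K^n\times K^n$, then show that the transition maps are $K$-analytic. First I check that $\tau_\alpha$ is well defined and bijective onto $\phi_\alpha(U_\alpha)\times K^n$. For fixed $x\in U_\alpha$, every class in $T_x(X)$ has a unique representative of the form $(U_\alpha,\phi_\alpha,v)$, because the relation $\sim$ with $\psi=\phi$ forces $w=v$ (the derivative of the identity is the identity). Hence $\bar v\mapsto v$ is the bijection $T_x(X)\to K^n$ already used to define the vector space structure, and $\tau_\alpha$ is bijective. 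The image $\phi_\alpha(U_\alpha)\times K^n$ is open in $K^{2n}$ since $\phi_\alpha(U_\alpha)$ is open in $K^n$.

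Next, the transition maps. On the overlap, $\tau_\beta\circ\tau_\alpha^{-1}$ sends
\[
(y,v)\mapsto\left(\tau_{\alpha\beta}(y),\,\tau_{\alpha\beta}'(y)v\right),\qquad y\in\phi_\alpha(U_\alpha\cap U_\beta)\,.
\]
I show this is $K$-analytic. Near any $y_0$, the map $\tau_{\alpha\beta}$ is a convergent power series, so its first component is analytic. The entries of the Jacobian $\tau_{\alpha\beta}'(y)$ are termwise derivatives of convergent power series, and these still converge on the same polydisc: over $K$, formal differentiation multiplies coefficients by integers $m$ with $\absolute{m}\le1$, so it does not shrink the radius of convergence. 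The second component $\sum_j\partial_j(\tau_{\alpha\beta})_i(y)\,v_j$ is then a power series in $(y,v)$, polynomial in $v$, converging on (polydisc)$\,\times K^n$. The inverse is the transition in the other direction, $\tau_\alpha\circ\tau_\beta^{-1}$. By the chain rule applied to $\tau_{\beta\alpha}\circ\tau_{\alpha\beta}=\mathrm{id}$ it is indeed inverse, so the charts are compatible. The chain rule for $K$-analytic maps also gives the transitivity of $\sim$ and the well-definedness of $\tau_\alpha$, which the statement implicitly relies on.

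Then the topology. I declare $W\subseteq T(X)$ open iff $\tau_\alpha(W\cap\pi^{-1}(U_\alpha))$ is open in $K^{2n}$ for every $\alpha$. Because the transition maps are homeomorphisms between open sets, each $\pi^{-1}(U_\alpha)$ is open and each $\tau_\alpha$ is a homeomorphism onto its image. The $\pi^{-1}(U_\alpha)$ cover $T(X)$, and every chart maps into $K^{2n}$, so the dimension is $2n$.

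The main obstacle, modest as it is, is the Hausdorff property required in the paper's definition of a manifold. I handle it in two cases. If two distinct points lie in fibres over distinct points $x\neq x'$, I choose disjoint open sets around $x$ and $x'$ in $X$ (Hausdorff) and pull them back by $\pi$, which is continuous since $\pi$ reads as the projection $(y,v)\mapsto\phi_\alpha^{-1}(y)$ in charts. If the two points lie in the same fibre, both lie in a single chart domain $\pi^{-1}(U_\alpha)$, which is homeomorphic to an open subset of the Hausdorff space $K^{2n}$, so they can be separated there. Paracompactness, assumed in the paper's remark, passes to $T(X)$ locally via the product structure, though that is not part of the statement.
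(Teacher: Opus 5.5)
Your proposal is correct and follows the paper's route. The paper's one-line proof rests on exactly your key step, namely that $(y,v)\mapsto(\tau_{\alpha\beta}(y),\tau_{\alpha\beta}'(y)v)$ is bi-analytic because the Jacobian is analytic and invertible, while you also supply the topology on $T(X)$, the Hausdorff property and the convergence of differentiated power series, which the paper leaves implicit. Only your closing aside is loose, since paracompactness is not a local property; argue it instead by refining the chart cover of $X$ to pairwise disjoint open sets $V_i\subseteq U_{\alpha(i)}$ (possible for paracompact $p$-adic manifolds), so that $T(X)\cong\bigsqcup_i\phi_{\alpha(i)}(V_i)\times K^n$ is a disjoint union of metrizable spaces.
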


\begin{proof}
This follows from the fact  the Jacobian matrix $\left(\phi_\beta\circ\phi_\alpha^{-1}\right)'$ is a bi-analytic isomorphism at $\phi_\alpha(x)$ for any $x\in X$.
\end{proof}

\subsection{Frame bundles}

The tangent bundle associated with a $p$-adic analytic $n$-manifold is given by the projection map
\[
\pi_X\colon T(X)\to X\,,
\]
where $T(X)$ is the tangent bundle of $X$. This is an example of a vector bundle on the manifold $X$. A  \emph{$K$-analytic frame} in $x\in X$ w.r.t.\ the tangent bundle is given by the assignment
\begin{align}\label{analyticFrame}
x\mapsto (b_1(x),\dots,b_n(x))\,,
\end{align}
where each $(b_1(x),\dots,b_n(x))$ is a basis of the tangent space $T_x(X)$ in $x\in X$. This can be made explicit by pulling back the standard basis of $K^n$ via the isomorphism
\[
T_x(X)\to K^n
\]
in each fibre of $\pi_X$ in $x\in X$, such that the resulting assignment (\ref{analyticFrame}) is a $K$-analytic map into the space $\mathcal{F}(TX)$ of frames, which itself is defined as the underlying space of the fibre bundle
\[
\mathcal{F}(TX)\to X\,,
\]
given as a so-called \emph{analytic principal $\GL_n(K)$-bundle} without going into the details of its definition, here. What is important here, is that each $b_i(x)$ is the $i$-th column of a matrix
\[
(b_1(x),\dots,b_n(x))\in\GL_n(K)
\]
for $x\in X$ and $i=1,\dots,n$.
\newline

The existence of $K$-analytic frames for compact $p$-adic analytic manifolds $X$ lies in the triviality of the tangent bundle $T(X)$:
\[
T(X)=X\times K^n\,,
\]
which follows from Serre's Theorem that $X$ is $K$-analytically isomorphic with the disjoint union of finitely many copies of $O_K^n$, cf.\ \cite[Th\'eor\`eme (1)]{Serre1965}. 

\begin{rem}
The triviality of the tangent bundle of a compact $p$-adic analytic manifold $X$ also follows inherently from the existence of a nowhere vanishing $K$-analytic differential form on $X$. This is in stark contrast to the case of a compact algebraic variety, where (in general) algebraic differential forms do have zeros, and this is an obstruction to the triviality of the associated (algebraic) tangent bundles.
\end{rem}

\subsection{Integral Structures}

\begin{dfn}
An integral structure on an $n$-dimensional $K$-vector space
is a finitely generated $O_K$-submodule 
$L\subset V$ which spans $V$ as a $K$-vector space.
\end{dfn}

\begin{dfn} Let $X$ be a $p$-adic analytic $n$-manifold. An integral structure $\Lambda$ on $X$ is the assignment $x\mapsto\Lambda_x$ of an integral structure on each tangent
space $T_x(X)$, which is locally constant in the sense that for each chart $(U, \phi)$ and
$z \in U$, there is an integral structure $L_z\subset K^n$ such that
\[
T_x \phi(\Lambda_x) = L_z
\]
for all $x$ in a neighbourhood of $z$.
\end{dfn}

\begin{dfn}
An atlas $\mathcal{A}$ on a $p$-adic analytic $n$-manifold is said to be $O_K$-
compatible, if for all $(U,\phi), (V, \psi) \in\mathcal{A}$ and $x \in U \cap V$ it holds true that the
derivative $T_x \tau_{UV} =\tau'_{UV}$ of the transition function
\[
\tau_{UV}\colon\phi(U) \to \psi(V)
\]
satisfies $T_x \tau_{UV}\in\GL_n(O_K)$.
\end{dfn}

\begin{lem} Any compact $p$-adic analytic manifold $X$ with an integral structure has a finite $O_K$-compatible atlas $\mathcal{A}$.
\end{lem}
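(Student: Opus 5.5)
The plan is to normalise each chart by a linear change of coordinates so that the integral structure becomes the standard lattice $O_K^n$ in every chart. The transition derivatives then preserve $O_K^n$, which places them in $\GL_n(O_K)$.

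\emph{Step 1: choose good local charts.} Fix any atlas $\mathcal{A}_0$ in the given analytic structure. Let $z\in X$ and let $(U,\phi)\in\mathcal{A}_0$ contain $z$. By the definition of an integral structure there is a lattice $L_z\subset K^n$ with $T_x\phi(\Lambda_x)=L_z$ for all $x$ in some open neighbourhood $W_z\subseteq U$ of $z$. A finitely generated $O_K$-submodule spanning $K^n$ is torsion-free over the discrete valuation ring $O_K$, hence free of rank $n$. So there is $g_z\in\GL_n(K)$ with $g_z(L_z)=O_K^n$. Set $\phi_z=g_z\circ\phi|_{W_z}$. This is again a chart compatible with $\mathcal{A}_0$, since a linear map is bi-analytic. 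Its derivative at every $x\in W_z$ is $g_z\circ T_x\phi$, so $T_x\phi_z(\Lambda_x)=O_K^n$ for all $x\in W_z$.

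\emph{Step 2: finiteness.} The sets $W_z$ cover $X$. By compactness, finitely many $W_{z_1},\dots,W_{z_m}$ suffice. Let $\mathcal{A}=\{(W_{z_i},\phi_{z_i})\}$. If one wants disjoint charts, one can refine to compact open balls, which is possible since $X$ is totally disconnected and compact. This is not needed for the statement, however.

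\emph{Step 3: compatibility.} Take $x\in W_{z_i}\cap W_{z_j}$ and let $\tau=\phi_{z_j}\circ\phi_{z_i}^{-1}$. By the chain rule,
\[
T\tau(\phi_{z_i}(x))=T_x\phi_{z_j}\circ (T_x\phi_{z_i})^{-1}.
\]
This map sends $O_K^n=T_x\phi_{z_i}(\Lambda_x)$ onto $T_x\phi_{z_j}(\Lambda_x)=O_K^n$. An element of $\GL_n(K)$ mapping $O_K^n$ onto itself has both itself and its inverse with entries in $O_K$, so it lies in $\GL_n(O_K)$. This is exactly the $O_K$-compatibility condition.

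The only step needing care is Step 1, namely that the local-constancy condition in the definition of an integral structure yields a single lattice valid on a whole neighbourhood. This is precisely what that definition provides, so the argument reduces to the structure theorem for lattices over $O_K$ together with compactness.
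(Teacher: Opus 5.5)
Your proof is correct and takes the same route as the paper. The paper obtains an $O_K$-compatible atlas from the integral structure by citing the proof of \cite[Lemma 3.2.6]{BKL2026}, then uses compactness to pass to finitely many charts. You carry out that cited step explicitly: you renormalise each chart by some $g_z\in\GL_n(K)$ so that $T_x\phi_z(\Lambda_x)=O_K^n$ near $z$, and the chain rule then shows the transition derivatives lie in $\GL_n(O_K)$.
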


\begin{proof}
Since, according to the proof of \cite[Lemma 3.2.6]{BKL2026}, every integral structure on $X$ gives rise to an $O_K$-compatible atlas, and due to the compactness
of $X$, the assertion is immediate.
\end{proof}

According to \cite[Section 3.3]{BKL2026}, an integral structure $\Lambda$ on a $p$-adic analytic manifold $X$ induces a Radon measure on $X$ as follows: The integral structure $\Lambda$ induces a unique integral structure on $\bigwedge^nT_x(X)$ for $x\in X$, such that the map
\[
X\to\bigwedge^nT_x(X),\;
x\mapsto\omega_X(x)\,,
\]
where $\omega_X(x)$ is a generator of the $1$-dimensional vector space $\bigwedge^nT_n(X)$ over $K$,
is locally constant. Thus, $O_K\omega_X(x)\subset\bigwedge^nT_x(X)$ defines an integral structure on the exterior power $\bigwedge^nT_x(X)$. Since the differential $n$-form $\omega_X(x)$ defining this integral structure is unique up to a multiplicative constant factor in $O_K^\times$, it follows that the modulus
\[
\absolute{\omega_X(x)}
\]
is uniquely determined by the integral structure $\Lambda$ on $X$. In this way, a unique measure $\mu_\Lambda$ is defined via
\[
\int_X f\,d\mu_\Lambda=\int_Xf(x)\absolute{\omega_X(x)}
\]
for functions $f\colon X\to\mathbb{C}$, and in particular
\[
\mu_\Lambda(A)=\int_X1_A\,d\mu_\Lambda=\int_A\absolute{\omega_X}\,,
\]
using the indicator function $1_A$
for a Borel set $A\subset X$.

\begin{dfn}
The Radon measure $\mu_\Lambda$ is called a \emph{canonical measure} on $X$. 
\end{dfn}

Observe that a canonical measure $\mu_\Lambda$ is always nowhere vanishing on $X$.

\begin{rem}
Since a $K$-analytic differential form $\omega$ on $X$ leads to a locally constant map
\[
X\to\bigwedge^n T_x(X),\;x\mapsto \omega(x)\,,
\]
and this can be used to construct an integral structure on $X$, this now means that
a $p$-adic analytic $n$-manifold always has a canonical measure, because it has a nowhere vanishing differntial $n$-form according to \cite[Th\'eor\`eme (2)]{Serre1965}.
\end{rem}

\subsection{Compact $p$-Adic Analytic Manifolds as Posets}

Given an open covering $\mathcal{U}$ of a topological space $X$, its associated \emph{nerve complex} $N(\mathcal{U})$ is defined as follows:
\begin{align*}
U&\neq\emptyset,\quad U\in\mathcal{U}&\text{(vertices)}
\\
U\cap V&\neq\emptyset,\quad U,V\in\mathcal{U}&\text{(edges)}
\\
&\vdots&\vdots\quad
\\
U_0\cap\dots\cap U_k&\neq\emptyset,\quad U_0,\dots,U_k\in\mathcal{U}\;\text{pairwise disjoint}&\text{($k$-simplices)}
\\
\vdots&&\text{(etc.)}\,.
\end{align*}
For any given atlas $\mathcal{A}$, it will be assumed that any two distinct charts $(U,\phi),(V,\psi)\in\mathcal{A}$ have the property that their underlying sets $U,V$ are also distinct. In this case, it makes sense to speak of the \emph{nerve complex} $N(\mathcal{A})$ of the atlas $\mathcal{A}$, defined as the nerve complex of the covering of the manifold $X$ underlying $\mathcal{A}$.
\newline

Let now $X$ be a compact $p$-adic analytic manifold endowed with an integral structure $\Lambda$, and a finite $O_K$-compatible atlas $\mathcal{A}$, safisfying the condition for having a well-defined nerve complex $N(\mathcal{A})$ as a simplicial complex. A further assumption made here is that the simplicial complex $N(\mathcal{A})$ is connected. This is an important assumption in the light of Serre's Theorem about compact $p$-adic analytic manifolds being disjoint unions of $p$-adic balls, and the connectedness of $N(\mathcal{A})$ can be ensured via taking suitable overlapping unions of such balls as constituents of an atlas $\mathcal{A}$. 

\begin{lem}\label{equalising}
The transition functions of an $O_K$-compatible atlas $\mathcal{A}$ take $p$-adic balls to $p$-adic balls of equal radius.
\end{lem}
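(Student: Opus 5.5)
We need to show: if $\tau=\tau_{UV}$ is $K$-analytic with $\tau'(x)\in\GL_n(O_K)$ for all $x$ in its domain, then for every ball $B=a+\pi^kO_K^n$ contained (small enough) in $\phi(U\cap V)$, the image $\tau(B)$ is the ball $\tau(a)+\pi^kO_K^n$. The plan is a $p$-adic inverse function argument at the level of balls, in which the isometry property of $\GL_n(O_K)$ does the work.

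\textbf{Step 1 (reduction to small balls).} Around each point $a$ of the domain, $\tau$ is given by a convergent power series. After shrinking, write
\[
\tau(a+h)=\tau(a)+\tau'(a)h+R(h)\,,\qquad \norm{R(h)}_{K^n}<\norm{h}_{K^n}
\]
for all $0\neq h$ with $\norm{h}_{K^n}\le r_a$. This holds because the higher-order terms are bounded by $C\norm{h}^2$ once $r_a$ is small, by the ultrametric estimate on the tail of the series.

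\textbf{Step 2 (isometry).} Since $\tau'(a)\in\GL_n(O_K)$, we have $\norm{\tau'(a)h}=\norm{h}$. The strict ultrametric inequality then gives
\[
\norm{\tau(a+h)-\tau(a)}=\norm{h}\,.
\]
Applying the same estimate at every base point of the ball (each point is a centre) shows that $\tau$ is an isometry on $B=a+\pi^kO_K^n$ whenever the radius $\absolute{\pi}^k\le r_a$. Consequently $\tau(B)\subseteq \tau(a)+\pi^kO_K^n=:B'$, and $\tau$ is injective on $B$.

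\textbf{Step 3 (surjectivity onto $B'$).} This is the main obstacle. To solve $\tau(a+h)=y$ for $y\in B'$, use the Newton/contraction map
\[
h\mapsto \tau'(a)^{-1}\bigl(y-\tau(a)-R(h)\bigr)\,.
\]
By Step 1, this map sends $\pi^kO_K^n$ into itself, and its Lipschitz constant is $<1$; the latter comes from the analogous estimate $\norm{R(h)-R(h')}<\norm{h-h'}$, proven the same way from the power series. Completeness of $O_K^n$ then yields a fixed point, giving $\tau(B)=B'$.

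As an alternative for this step, one can avoid the contraction estimate. Apply Step 2 to the inverse transition $\tau_{VU}$, whose derivative is $\tau'(a)^{-1}\in\GL_n(O_K)$. This shows $\tau_{VU}(B')\subseteq B$, so $B'\subseteq\tau(B)$. The catch is that this requires $B'$ to lie in the domain of $\tau_{VU}$ with a compatible radius bound; one handles it by taking $r_a$ as the minimum of the radii for $\tau$ at $a$ and for $\tau^{-1}$ at $\tau(a)$.

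\textbf{Step 4 (conclusion).} Finally, one notes that compactness (finite atlas) permits a uniform radius below which all balls are mapped to balls of equal radius. This is the sense in which the lemma is to be read: it applies to sufficiently small balls, or to balls on which the power-series expansion converges with the stated bound.
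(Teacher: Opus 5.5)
Your argument is correct and rests on the same idea as the paper's proof: a first-order expansion $\tau(a+h)=\tau(a)+\tau'(a)h+\text{h.o.t.}$, together with the fact that $\tau'(a)\in\GL_n(O_K)$ is an isometry of $(K^n,\norm{\cdot}_{K^n})$.

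The paper's proof is less careful than yours. It passes directly from $\tau'O_K^n=O_K^n$ to ``$\tau$ takes $O_K^n$ to a translate of $O_K^n$'' and to $\tau(\pi^kO_K^n)=a_0+\pi^kO_K^n$. It never bounds the higher-order terms, and it never checks that the image fills the target ball. You supply both: the remainder estimate on a small enough ball, and surjectivity via the fixed-point argument or via $\tau_{VU}$. The fixed-point step is sound: by discreteness of the value group, the strict inequality $\norm{R(h)-R(h')}<\norm{h-h'}$ already gives a contraction factor $\le\absolute{\pi}$.

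Your restriction to sufficiently small balls is not a weakness; the unrestricted statement is false. Take $p$ odd and $\tau(x)=x+x^p/p$ on $\mathbb{Z}_p$.
\begin{itemize}
\item $\tau$ is injective on $\mathbb{Z}_p$, hence bi-analytic onto an open set by the inverse function theorem.
\item $\tau'(x)=1+x^{p-1}\in\mathbb{Z}_p^\times$ for every $x\in\mathbb{Z}_p$, so $\mathset{(\mathbb{Z}_p,\mathrm{id}),(\mathbb{Z}_p,\tau)}$ is $O_K$-compatible.
\item Yet $\absolute{\tau(1)-\tau(0)}_p=\absolute{1+p^{-1}}_p=p$, so $\tau(\mathbb{Z}_p)$ is not a ball of radius $1$.
\end{itemize}
The rescalings $x\mapsto p^k\tau(p^{-k}x)$ on $p^k\mathbb{Z}_p$ give the same failure at every radius. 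So the lemma holds only in the local form you prove.

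Two small corrections.

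In Step 2, you cannot apply the estimate at another centre $b\in B$ up to radius $\absolute{\pi}^k$ merely because $\absolute{\pi}^k\le r_a$. Either note that re-expanding the series at $b$ keeps the same convergence bounds, or, more simply, take the isometry on $B$ from your Step~3 difference estimate at the single centre $a$.

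In Step 4, a uniform radius needs the transition domains $\phi(U\cap V)$ to be compact, e.g.\ clopen chart domains. Compactness of $X$ and finiteness of $\mathcal{A}$ are not enough. On $X=\mathbb{Z}_p$, build a chart on $V=\mathbb{Z}_p\setminus\mathset{0}$ from translated, rescaled copies of the map above on the balls $p^mu+p^{m+1}\mathbb{Z}_p$. This gives failing balls of arbitrarily small radius.

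Neither point affects the local statement you actually establish.
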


\begin{proof}
The transition maps $\tau_{UV}$ for overlapping charts $(U,\phi),(V,\psi)$ of $\mathcal{A}$ satisfy $\tau_{UV}'\in\GL_n(O_K)$. Hence, locally in $x\in U\cap V$, 
\[
\tau_{UV}'O_K^n=O_K^n\,,
\]
and thus $\tau_{UV}$ takes $O_K^n$ to a translate of the ball $O_K^n$ in $K^n$, which is a $p$-adic ball of equal radius $1$ as $O_K^n$ is. The power series expansion
\[
\tau_{UV}(T)=a_0+\tau_{UV}'(0)T+\text{h.o.t.}
\]
with $\tau_{UV}'(0)\in\GL_n(K)$ being the constant term in the power series expansion of $\tau_{UV}'(T)$,
now yields
\[
\tau_{UV}(\pi^kO_K^n)=a_0+\pi^kO_K^n
\]
for $k>0$. Since any $p$-adic ball in $K^n$ is a translate of some $\pi^kO_K^n$ with $k\in\mathbb{N}$, the assertion now follows.
\end{proof}

This fact allows to define the notion of a ball on a compact $p$-adic analytic manifold $(X,\mathcal{A})$ by defining a set $B\subset X$ to be a ball of radius $r>0$, if for any  chart $(U,\phi)$, with $U$ containing $B$, the image $\phi(B)$ is a $p$-adic ball in $K^n$ of radius $r>0$. The following exception is to hold by convention: if a $k$-simplex for $\mathcal{A}$ happens to be ball-shaped after being taken into $K^n$ by a chart map, it will nevertheless \emph{not} be called a ball in $X$. The reason is that the distinction ``face'' or ``simplex'' on the one hand, and ``ball'' on the other hand needs to be upheld in order to avoid confusion where this distinction matters. Using this convention, it is also the case for $p$-adic analytic manifolds with integral structure that $p$-adic balls never overlap. 

\begin{exm}
Let $X=\mathbb{P}^1(\mathbb{Q}_2)$ with the standard atlas 
\[
\mathcal{A}=\mathset{(U_0,\phi_0),(U_1,\phi_1)}
\]
with
\begin{align*}
U_0&=\mathset{[x_0:x_1]\in\mathbb{P}^1(\mathbb{Q}_2)\mid\absolute{x_0}_2=1,\;\absolute{x_1}_2\le1}\,,
\\
U_1&=\mathset{[x_0:x_1]\in\mathbb{P}^1(\mathbb{Q}_2)\mid\absolute{x_1}_2=1,\;\absolute{x_0}_2\le1}
\end{align*}
and associated chart maps
\begin{align*}
\phi_0&\colon U_0\to\mathbb{Q}_2,\;[x_0:x_1]\mapsto\frac{x_1}{x_0}\,,
\\
\phi_1&\colon U_1\to\mathbb{Q}_2,\;[x_0:x_1]\mapsto\frac{x_0}{x_1}\,.
\end{align*}
The intersection $U_0\cap U_1$ is mapped to the unit sphere
in $\mathbb{Q}_2$ under both chart maps $\phi_0,\phi_1$, with corresponding transition maps $z\mapsto z^{-1}$. But this image set happens to be a $2$-adic disc, whereas $U_0\cap U_1$ is the $1$-simplex of the nerve complex $N(\mathcal{A})$. 
\end{exm}

Define $\mathfrak{B}(X,\mathcal{A})$ as the set of balls and $k$-simplices in $X$ for $k\in\mathbb{N}$. The inclusion relation endows $\mathcal{B}(X,\mathcal{A})$ with a natural poset structure. In general, this poset is not a tree. It is the disjoint union of the finite poset of the  faces of the nerve complex $N(\mathcal{A})$ and the trees of $p$-adic balls in $X$, each  rooted in a face of $N(\mathcal{A})$, since these are all compact open $p$-adic analytic submanifolds of $X$. 
Define the poset
\[
\overline{\mathfrak{B}}(X,\mathcal{A})=\mathfrak{B}(X,\mathcal{A})\bigsqcup X
\]
as the natural poset extension of $\mathcal{B}(X,\mathcal{A})$ given by appending it with the boundary $X$ of the union of the trees of balls attached to the faces of $N(\mathcal{A})$.

\begin{dfn}
A  \emph{path} $a\leadsto b$ with $n$ points in $\mathfrak{B}(X,\mathcal{A})$ is a sequence
\[
a=x_0,\dots,x_n=b
\]
with $x_0,\dots,x_n\in\mathfrak{B}(X,\mathcal{A})$ pairwise distinct, such that for $i=0,\dots,n-1$, it holds true that either
\[
x_i< x_{i+1}
\]
or 
\[
x_{i+1}< x_i\,,
\]
where $<$ is the ordering of the poset $\mathcal{B}(X,\mathcal{A})$ which excludes the reflexivity property of a partial ordering. 

\smallskip
A \emph{path} in the tree associated to a $p$-adic ball is an infinite nested decreasing sequence of $p$-adic balls such that no $p$-adic ball fits strictly between two consecutive balls in the sequence.  

\smallskip
A \emph{path} $x\leadsto y$ in  $\overline{\mathfrak{B}}(X,\mathcal{A})$ between $x\neq y$ in $X$ is a triple
\[
(\gamma_x,\gamma^\circ_{xy},\gamma_y)\,,
\]
where $\gamma_x,\gamma_y$ are maximal paths in trees rooted in some simplices $U_x,U_y$ in $N(\mathcal{A})$ and ending in $x$, respectively $y$, $\gamma_{xy}$ is a  path $U_x\leadsto U_y$ in $\mathfrak{B}(X,\mathcal{A})$ with finitely many points, and where $x$ and $y$ are each the intersections of the balls constituing the paths $\gamma_x$ and $\gamma_y$, respectively. 
\end{dfn}

\begin{rem}
The fact that $K$ is spherically finite  guarantees that any infinite descending nested sequence of balls always has a non-empty intersection, and this intersection is a point in $K$. In other words, all paths in the trees of balls of $X$ have an endpoint contained in $X$.
\end{rem}

In order to now define a notion of geodetic distance, let
$U_{\mathfrak{s}}$ for $\mathfrak{s}\in\mathfrak{B}(X,\mathcal{A})$ be either a simplex of $N(\mathcal{A})$ represented by the point $\mathfrak{s}\in\mathfrak{B}(X,\mathcal{A})$ contained in the poset of faces associated with $N(\mathcal{A})$, or
a ball in $X$ represented by the point $\mathfrak{s}$ contained in the poset associated with a tree of balls rooted in a face of $N(\mathcal{A})$. The reason for this distinctive language will become apparent in the next paragraph.
\newline

First, define
\[
U_{\mathfrak{s}}^\circ(x)=U_{\mathfrak{s}}\setminus\bigcup\limits_{\mathfrak{t}<\mathfrak{s}\atop x\not<\mathfrak{t}}U_{\mathfrak{t}}
\]
for $x\in X$ and $\mathfrak{s}\in\mathfrak{B}(X,\mathcal{A})$ such that $x<\mathfrak{s}$. If $\gamma\colon x\leadsto y$ is a path between $x,y\in X$ inside $\overline{\mathfrak{B}}(X,\mathcal{A})$, then write $\mathfrak{s}\in\gamma$ for any point $\mathfrak{s}$ in $\mathfrak{B}(X,\mathcal{A})$ lying in the path $\gamma$.
Now, define its \emph{length} $\ell(\gamma)$ as
\begin{align}\label{pathlength}
\ell_\Lambda(\gamma)=\sum\limits_{\mathfrak{s}\in\gamma_x}\mu_\Lambda(U_{\mathfrak{s}}^\circ(x))+\sum\limits_{\mathfrak{s}\in\gamma_y}\mu_\Lambda(U_{\mathfrak{s}}^\circ(x))
+\sum\limits_{\sigma\in\gamma_{xy}^\circ}\mu_\Lambda(U_\sigma)\,.
\end{align}
The \emph{geodetic distance} between $x\neq y$ in $X$ is defined as
\[
d_\Lambda(x,y)=\inf\mathset{\gamma_\Lambda(\gamma)\mid\gamma\colon x\leadsto y\;\text{is a path between $x$ and $y$ in $\mathfrak{B}(X,\mathcal{A})$}}\,,
\]
and depends on the integral structure $\Lambda$ on $X$.

\begin{lem}
The geodetic distance $d_\Lambda(x,y)$ extends to a metric on $X$.
\end{lem}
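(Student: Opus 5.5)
We extend $d_\Lambda$ to all of $X\times X$ by setting $d_\Lambda(x,x)=0$. We then verify the three metric axioms for the resulting function. The tools are the structure of the poset $\overline{\mathfrak{B}}(X,\mathcal{A})$ (finitely many faces of $N(\mathcal{A})$, plus trees of balls rooted in them) and the fact that $\mu_\Lambda$ is a nowhere vanishing Radon measure.

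\emph{Finiteness and symmetry.} Since $N(\mathcal{A})$ is connected and finite, any two faces $U_x,U_y$ are joined by a path $\gamma^\circ_{xy}$ with finitely many points. Every $x\in X$ lies in some maximal tree path $\gamma_x$ rooted in a face, by the Remark on spherical finiteness. Hence the set of paths $x\leadsto y$ is nonempty.

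Each length is finite. In (\ref{pathlength}), the sets $U^\circ_{\mathfrak{s}}(x)$ for $\mathfrak{s}\in\gamma_x$ are pairwise disjoint subsets of the compact root face. So their $\mu_\Lambda$-measures sum to at most $\mu_\Lambda(U_x)<\infty$, and the finite middle sum is also finite.

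Symmetry follows by reversing a path: $(\gamma_x,\gamma^\circ_{xy},\gamma_y)\mapsto(\gamma_y,\gamma^\circ_{yx},\gamma_x)$ preserves the length. Here I read the second sum in (\ref{pathlength}) as using $U^\circ_{\mathfrak{s}}(y)$.

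\emph{Triangle inequality.} Fix $\varepsilon>0$ and choose near-optimal paths $x\leadsto y$ and $y\leadsto z$. Concatenate their middle parts through the face data at $y$, discarding both copies of $\gamma_y$. If the roots of the two copies of $\gamma_y$ differ, insert a face path between them. This path can be taken inside the finite sequence already present, because both roots contain $y$ and are therefore joined through the face $U_{y}\cap U'_{y}$ of $N(\mathcal{A})$, whose measure is bounded by the discarded terms. If the concatenation repeats points, shorten it to a path with pairwise distinct points; removing terms only lowers the length, since all terms are nonnegative. The result is a path $x\leadsto z$ with length at most $\ell(\gamma_1)+\ell(\gamma_2)$, hence $d_\Lambda(x,z)\le d_\Lambda(x,y)+d_\Lambda(y,z)+2\varepsilon$.

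\emph{Positivity.} This is the main obstacle: showing $d_\Lambda(x,y)>0$ for $x\neq y$. Since $x\ne y$ and balls in $K^n$ separate points, there is a smallest level at which the tree paths of $x$ and $y$ inside a common chart diverge. Let $B$ be the ball containing $x$ one level below that. The set $U^\circ_{B}(x)$ is a nonempty compact open set; it contains the sibling balls of the next step along $\gamma_x$. Canonical measures are nowhere vanishing and compact open sets have positive measure, so $\mu_\Lambda(U^\circ_B(x))>0$.

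The point is to obtain a lower bound uniform over all paths $x\leadsto y$. Any such path must contain, in $\gamma_x$, the chain of balls around $x$ down from its root. One then argues that either this root lies inside $B$'s ancestry, so the term above appears, or $\gamma^\circ_{xy}$ contains a face, which has measure bounded below by $c=\min_{\sigma\in N(\mathcal{A})}\mu_\Lambda(U_\sigma)>0$; this minimum exists because $N(\mathcal{A})$ is finite. In either case, $\ell_\Lambda(\gamma)\ge\min\{c,\mu_\Lambda(U^\circ_B(x))\}>0$, and taking the infimum gives $d_\Lambda(x,y)>0$.

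I expect the delicate part to be the bookkeeping of which tree terms must appear once the root face of $\gamma_x$ is allowed to vary. I would handle it by the case distinction just described.
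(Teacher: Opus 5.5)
\textbf{The gap: you have inverted the definition of $U^\circ_{\mathfrak{s}}(x)$.} By definition one removes from $U_{\mathfrak{s}}$ every $U_{\mathfrak{t}}$ with $\mathfrak{t}<\mathfrak{s}$ and $x\not<\mathfrak{t}$, i.e.\ precisely the elements below $\mathfrak{s}$ that \emph{miss} $x$. Hence $x\in U^\circ_{\mathfrak{s}}(x)$ for every $\mathfrak{s}\in\gamma_x$, and two consequences follow. First, these sets are never pairwise disjoint. Second, the sibling balls of the next step along $\gamma_x$ are exactly what is removed, not what remains.

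Read literally, every $y\neq x$ in $U_{\mathfrak{s}}$ lies in a small ball missing $x$, so $U^\circ_{\mathfrak{s}}(x)=\mathset{x}$. This set is $\mu_\Lambda$-null, so the tree sums in (\ref{pathlength}) vanish. Even if only immediate children are removed, $U^\circ_{\mathfrak{s}}(x)$ is the child containing $x$, so the sets are nested, not disjoint.

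Your finiteness claim survives, but for other reasons. The two places where you genuinely need the tree terms do not survive:
\begin{enumerate}
\item In the positivity argument, case (i) yields only $\min\{c,\mu_\Lambda(U^\circ_B(x))\}=0$.
\item In the triangle inequality, the inserted face $U_y\cap U_y'$ has positive measure. It cannot be charged to the discarded copies of $\gamma_y$, whose terms are zero. So the concatenated path need not have length at most $\ell_\Lambda(\gamma_1)+\ell_\Lambda(\gamma_2)$. The claim that this face path ``can be taken inside the finite sequence already present'' is also unsupported.
\end{enumerate}

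\textbf{How the paper avoids this.} The paper does not rely on tree terms in this way.

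For positivity, it argues that there are only finitely many paths $x\leadsto y$, because the nerve is finite and the rest of $\mathfrak{B}(X,\mathcal{A})$ is a finite disjoint union of rooted trees. So the infimum is a minimum. Each path has positive length because it passes through an element of $\mathfrak{B}(X,\mathcal{A})$, in particular through faces of $N(\mathcal{A})$, whose measures are bounded below by your $c$. Thus your case (ii) is what actually carries positivity, and the ``uniformity over all paths'' you flag as the main obstacle is automatic.

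For the triangle inequality, the paper attaches to each point $x$ the root $\sigma(x)$ of the tree of balls containing it. It writes $d_\Lambda(x,y)=\ell_\Lambda(\gamma_x)+d_\Lambda(\sigma(x),\sigma(y))+\ell_\Lambda(\gamma_y)$. It then reduces to the triangle inequality for the weighted path distance on the finite nerve, plus $\ell_\Lambda(\gamma_z)\ge0$.

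\textbf{What to repair.} With roots fixed in this way, the root-switching case never arises. Your concatenate-at-$\sigma(y)$-and-shortcut argument is then correct and amounts to the same proof, since it only drops nonnegative terms. Only the insertion of $U_y\cap U_y'$, justified via the misread tree terms, needs to go.
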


\begin{proof}
The extension is evidently to be taken as $d_\Lambda(x,x)=0$ for $x\in X$. If $x\neq y$, then any path $\gamma\colon x\leadsto y$ contains an element of $\mathfrak{B}(X,\mathcal{A})$. Since the nerve complex is finite, and its complement in $\mathfrak{B}(X,\mathcal{A})$ is a disjoint union of finitely many rooted trees, it follows that there are only finitely many paths between $x$ and $y$, and each has finite length. Hence, the infimum is a positive number, and thus $(x,y)\mapsto d_\Lambda(x,y)$ is a positive definite function.  

\smallskip
The symmetry of $d_\Lambda(x,y)$ is immediate.

\smallskip
The triangle inequality follows thus: first,
\[
d_\Lambda(x,y)=\ell_\Lambda(\gamma_x)+d_\Lambda(\sigma(x),\sigma(y))+\ell_\Lambda(\gamma_y)\,,
\]
where $\gamma_x$ is the geodesic in the rooted tree of balls containing $x$ with root $\sigma(x)$ in the nerve complex $N(\mathcal{A})$. Likewise, $\gamma_y$ is the geodesic connecting simplex $\sigma(y)$ with $y$. Further, $d_\Lambda(\sigma(x),\sigma(y))$ is the third summand in the definition of $\ell_\Lambda(\gamma)$ in (\ref{pathlength}). And $\gamma=(\gamma_x,\gamma_{xy}^\circ,\gamma_y)$ is a minimising path for the geodetic distance whose part in $N(\mathcal{A})$ connects $\sigma(x)$ with $\sigma(y)$. It exists by the argument in the beginning of the proof. Let now $z\in X$ be a third point. Then 
\begin{align*}
d_\Lambda(x,y)&\le\ell_\Lambda(\gamma_x)+d_\Lambda(\sigma(x),\sigma(z))+d_\Lambda(z,y)+\ell_\Lambda(\gamma_y)
\\
&\le\ell_\Lambda(\gamma_x)+d_\Lambda(\sigma(x),\sigma(z))+2\ell(\gamma_z)+d_\Lambda(\sigma(z),\sigma(y))
\\
&=d_\Lambda(x,z)+d_\Lambda(z,y)\,,
\end{align*}
where $\sigma(z)$ is the root of the tree of balls containing $z$, and $\gamma_z$ is the geodesic between $\sigma(z)$ and $z$ in that tree.

\smallskip
The assertion now follows.
\end{proof}

\subsection{Geodetic distance kernels}
Having a geodetic distance on a $p$-adic analytic manifold $X$ with integral structure $\Lambda$, allows to define a 
general kernel function
\[
w(x,y)=w(d_{\Lambda(X)}(x,y))
\]
in order to obtain an integral Laplacian
operator
\[
\Delta_wu(x)=\int_Xw(d_{\Lambda(X)}(x,y))(u(x)-u(y))\,d\mu_\Lambda(y)
\]
for $u\in\mathcal{D}(X)$, where the latter is the space of locally constant functions $X\to\mathbb{C}$.
Special cases of such an operator are the \emph{Vladimirov-Taibleson operator}
\[
\Delta^\alpha u(x)=\int_Xd_\Lambda(x,y)^{-n\alpha}(u(x)-u(y))\,d\mu_\Lambda(y)
\]
with $\alpha\in\mathbb{R}$, 
which naturally generalises the known Vladimirov-Taibleson operator on $K^n$, and is a non-local operator in the sense that its kernel function connects any two points of the manifold $X$.
\newline

Defining a more localised operator takes some preparation.
\newline

The \emph{height} of a point $\xi\in\mathfrak{B}(X,\mathcal{A})$ is defined as
\[
\height(\xi)=\text{length of a maximal chain in $\mathfrak{B}(X,\mathcal{A})$ down to $\xi$}\,,
\]
where ``down to'' means that the vertices of the nerve complex $N(\mathcal{A})$ are considered as being the maximal elements of $\mathfrak{B}(X,\mathcal{A})$, and
the points of $X$ as being the minimal elements of the extended poset $\overline{\mathfrak{B}}(X,\mathcal{A})$.
\newline

The \emph{join} of $\xi,\eta\in\mathfrak{B}(X,\mathcal{A})$ is defined as the supremum of the set $\mathset{\xi,\eta}$ in the poset $\mathfrak{B}(X,\mathcal{A})$, if it exists and is unique. 
It is denoted as
\[
\xi\wedge\eta\,.
\]
The notion of join is naturally extended to $\overline{\mathfrak{B}}(X,\mathcal{A})$. Notice that $x\wedge y$ need not be a ball in $X$, but can also be a simplex of $N(\mathcal{A})$.

\begin{dfn}
The height-$k$-nearest neighbour graphon $\kNNh(X)$
associated with $X$ has the vertex set $X$, and an edge is given by pairs 
$(x,y)\in X$ with 
$x\neq y$ and
\[
\height(x \wedge y) \ge k
\]
for fixed $k \le\dim(N(\mathcal{A}))$.
\end{dfn}

Observe that the edges of the height-$k$-nearest neighbour graphon
are undirected. Observe also that the height-$0$-nearest neighbour graphon is connected, since the nerve complex $N(\mathcal{A})$ is connected.
The \emph{degree} of $x$ in $\kNNh(X)$ is defined as
\[
\deg_k(x)=\int_{y\sim x}\mu_\Lambda(x\wedge y)\,d\mu_\Lambda(y)\,,
\]
where $x\sim y$ means that $x,y\in X$ are connected by an edge in $\kNNh(X)$. Observe that 
\[
\Star_k(x)=\mathset{y\in X\mid x\sim y}
\]
is a measurable subset of $X$, and that $\deg_k$ is locally constant on $X$.

\begin{lem}\label{degreeConst}
The graphon $\kNNh(X)$ is connected, if and only if $k\le\dim(N(\mathcal{A}))$.
If $k\in\mathbb{N}$ is sufficiently large, then  $\deg_k$ is constant on any ball of $X$.
\end{lem}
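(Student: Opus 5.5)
The plan is to treat the two assertions separately, both via the structure of $\overline{\mathfrak{B}}(X,\mathcal{A})$: the finite face poset of the nerve complex $N(\mathcal{A})$, with the trees of balls rooted in its faces hanging below it. Two facts do most of the work. The join $x\wedge y$ of $x\ne y$ is either a ball in one tree (when $x,y$ lie in a common ball) or a face of $N(\mathcal{A})$. And by Lemma~\ref{equalising}, balls correspond under transition maps to balls of equal radius, so heights of balls are well defined independently of the chart.

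For connectedness I will prove both implications.

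\emph{Sufficiency.} Assume $k\le\dim(N(\mathcal{A}))$. It suffices to connect any two points $x,y$ through a finite chain $x=z_0\sim z_1\sim\dots\sim z_m=y$ in $\kNNh(X)$.
\begin{itemize}
\item By connectedness of $N(\mathcal{A})$, choose a path $U_x\leadsto U_y$ of faces in $\mathfrak{B}(X,\mathcal{A})$. Each consecutive pair is comparable, so one face contains the other and their intersection is nonempty.
\item Pick points $z_i$ in the deepest faces along this path. Consecutive points $z_i,z_{i+1}$ then both lie in some face (or in a ball below it) whose height is at least the height of a top-dimensional simplex.
\item Points sharing such a face, or a ball rooted in it, have a join of height $\ge k$ exactly because $k\le\dim N(\mathcal{A})$: a maximal chain from a vertex down to a $\dim N(\mathcal{A})$-simplex has length $\dim N(\mathcal{A})$.
\end{itemize}

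\emph{Necessity.} If $k>\dim N(\mathcal{A})$, an edge forces $x\wedge y$ to be a ball of height $\ge k$ strictly inside a tree. Then $x$ and $y$ lie in a common ball of fixed finite height. Since balls never overlap, the finitely many balls of height $k$ (together with the complement of their union, if nonempty) partition $X$ into at least two pieces with no edges between them. The main obstacle is here: I must check that there are at least two such pieces, or that some point is isolated. I would use that each tree of balls branches into $q^n\ge2$ children, where $q$ is the residue field size. I would also use that the height of the deepest face equals $\dim N(\mathcal{A})$, which needs the convention that maximal chains run from vertices down through faces of increasing dimension.

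For the second assertion, take $k$ larger than the height of every face of $N(\mathcal{A})$ plus the height of a given ball $B$'s root offset. Fix $x\in B$ with $B$ deep enough, say $\height(B)\ge k$.
\begin{itemize}
\item Then $\Star_k(x)$ consists of the $y$ with $x\wedge y$ a ball of height $\ge k$. It depends only on the ball of height $k$ containing $x$, and $\mu_\Lambda(x\wedge y)$ depends only on the position of $y$ relative to the chain of balls above $x$ up to that height.
\item Choose a chart $\phi$ on a neighbourhood of that ball. Translations of $K^n$ within $\phi(B)$ carry $x$ to any other $x'\in B$ and preserve balls and their radii, hence heights.
\item The measure $\mu_\Lambda$ is locally a constant multiple of Haar measure, because $\omega_X$ is locally constant for the integral structure, and so it is translation invariant there.
\end{itemize}
A change of variables then gives $\deg_k(x)=\deg_k(x')$. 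For a general ball, I would refine it into finitely many height-$k$ balls; each of these gives a common value by the same computation, since the degree integral over the nested shells is $\sum_j \mu(B_j)\,\mu(B_j\setminus B_{j+1})$ with radii independent of position.
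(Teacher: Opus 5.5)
The genuine gap is in the sufficiency half of your connectedness argument, and it cannot be closed for $k\ge1$. Your necessity half is the paper's argument: for $k>\dim N(\mathcal{A})$ every edge comes from a ball of height $\ge k$, and these balls cut $X$ into edge-free pieces (you also supply the branching argument for having at least two). Your local argument for $\deg_k$ matches the paper's appeal to regular trees and local constancy of $\absolute{\omega_X}$.

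In the sufficiency half, you assert that consecutive $z_i,z_{i+1}$ lie in a common face of height at least that of a top-dimensional simplex, but nothing supports this. Consecutive minimal faces of a path in $N(\mathcal{A})$ in general have only a vertex, of height $0$, as common upper bound. You also never connect the endpoints $x,y$ to the $z_i$, and a point lying in only one chart domain belongs to no face of positive height.

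The paper's own Example is a counterexample. For $X=\mathbb{P}^1(\mathbb{Q}_2)$ one has $\dim N(\mathcal{A})=1$, and $U_0\setminus U_1=\phi_0^{-1}(2\mathbb{Z}_2)$ is a ball of height $1$. For $k=1$, a point $x\in U_0\setminus U_1$ and any $y\notin U_0\setminus U_1$ have either no common upper bound or $x\wedge y=U_0$, which has height $0$. So $\kNNh(X)$ has the three components $U_0\setminus U_1$, $U_0\cap U_1$, $U_1\setminus U_0$. The ``if'' direction holds only for $k=0$: every pair with a join is then an edge, and connectedness of $N(\mathcal{A})$ suffices, as the paper observes right after defining $\kNNh(X)$. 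At this point the paper's proof merely asserts ``clearly connected'', so there is nothing there that would rescue your step.

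A second, smaller overreach is your refinement step, which extends constancy of $\deg_k$ from balls of height $\ge k$ to an arbitrary ball. Suppose $\mu_\Lambda=c\absolute{dx}$ on a maximal ball of height $\ge k$ whose chart image has Haar measure $q^{-nm}$. Your shell sum then gives $\deg_k=c^2(1-q^{-n})\sum_{j\ge0}q^{-2n(m+j)}$ there, so equal radii do not give equal values once $c$ varies; the paper explicitly allows $\absolute{g_U}$ to be only locally constant. For example, take $X=O_K$ with the identity chart, $\Lambda_x=O_K$ for $x\in\pi^2O_K$ and $\Lambda_x=\pi O_K$ otherwise. Then $\deg_k$ takes two different values on the ball $\pi O_K$ for every $k\ge2$. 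Your claim that translations preserve heights is likewise justified only on balls that no face of $N(\mathcal{A})$ cuts through. So the degree statement is available only for sufficiently small balls, which is what the paper's proof actually gives and what Lemma~\ref{waveletSpec_kNNh} uses.
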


\begin{proof}
If $k\le\dim(N(\mathcal{A}))$, then $\kNNh(X)$ is clearly connected. Conversely, if $k > \dim(N(\mathcal{A}))$, then the
edges of $\kNNh(X)$ are given by pairs
$(x,y)$, where $x\wedge y$ is a ball strictly 
contained in a minimal simplex of $N(\mathcal{A})$ containing it. Consequently, since
minimal simplices of $N(\mathcal{A})$ are roots of trees in $\mathcal{B}(X,\mathcal{A})$, the connected
components of $\kNNh(X)$ are balls forming a disjoint covering of $X$. Because 
$k > \dim(N(\mathcal{A}))$, it follows that there are at least two connected components of $\kNNh(X)$. 

\smallskip
The latter assertion follows in case edges connect only points in $X$ being ends of the same tree rooted in a simplex of $N(\mathcal{A})$, and from the fact that these trees are regular. And this plays out for sufficiently large $k$ by the fact that the modulus $\absolute{\omega_X(x)}$ induced by the integral structure $\Lambda$  is locally constant on $X$.
\end{proof}

The \emph{height-$k$-nearest neighbour graphon Laplacian} $\Delta_k^\alpha$ on $X$ is defined as
\[
\Delta_{k}^\alpha u(x)=
\int_X T_{k}^\alpha(x,y)(u(x)-u(y))\,d\mu_\Lambda(y)
\]
with kernel function
\[
T_{k}^\alpha(x,y)=\begin{cases}
\frac{\displaystyle\deg_k(y)}{\displaystyle\deg_k(x)}\cdot d_\Lambda(x,y)^{-n\alpha},&y\in\Star_{k}(x)
\\
0,&\text{otherwise}
\end{cases}
\]
for $x,y\in X$, acting on functions $u\in\mathcal{D}(X)$.
\newline

A \emph{Kozyrev wavelet} supported on a ball $B(a)\subset K^n$ with $m\in\mathbb{Z}$ is a
function of the following kind:
\[
\psi_{B_m(a),j}(x)=\mu(B_m(a))^{-\frac12}\chi(\pi^{m-1}\tau(j)x)1_{B_m(a)}(x)\,,
\]
where $\chi$ is a unitary character of $K^n$, $x\in K^n$, $j\in\mathbb{F}_q^n$, $\tau\colon\mathbb{F}_q^n\to O_K^n$ is a lift, and $\mu$ is the Haar measure on $K^n$ such that $\mu(O_K^n)=1$, and where
\[
\tau(j)x=\tau_1(x_1)+\dots+\tau_n(x_n)
\]
with $j=(j_1,\dots,j_n)\in\mathbb{F}_q^n$,
$x=(x_1,\dots,x_n)\in K^n$, $\tau=(\tau_1,\dots,\tau_n)$.

\begin{dfn}
A wavelet on $(X,\mathcal{A})$ is a function $\psi\colon X\to\mathbb{C}$ supported in
a ball $B(a)\subset X$ such that for any chart $(U, \phi)$ containing $B$, the function
\[
\phi_*\psi\colon\phi(B)\to\mathbb{C}, \phi(x)\mapsto\psi(x)
\]
with $x \in B(a)$ is a rescaled Kozyrev wavelet supported on $\phi(B(a))$,   such
that $\norm{\psi}_{L^2(X),\mu}=1$.
\end{dfn}

The well-definedness of the notion of wavelet follows from the fact that a point $a\in X$ is given first
by a family of points $\phi(a)\in K^n$ associated with a family of charts $(U,\phi)$ such
that $a\in U$, since the transition maps on overlapping charts $(U,\phi)$, $(V,\psi)$
containing $a$ take $\phi(a)$ to $\psi(a)$, and, secondly by Lemma \ref{equalising}. 
A wavelet on $X$ will be denoted as
\[
\psi=\psi_{B(a),j}(x)
\]
with $a\in X$, ball $B(a)\subset X$ its support, and $j\in\mathbb{F}_q^n$.
\newline

A wavelet on a chart $(U,\phi)$ such that $U$ contains the support of
a wavelet $\psi$ on $X$ has the form
\[
\phi_{*}\psi(\phi(x)) 
= \mu_\Lambda (B(a))^{\frac12}
\chi\left(\pi^{m-1}\tau(j)\phi(x)\right)
1_{\phi(B(a))} (\phi(x))
\]
for $x\in X$, and where $\supp(\psi)=B(a)\subset X$.
\newline

Given a wavelet $\psi$ supported  on $B\subset X$, an important property is the vanishing integral:
\begin{align}\label{vanishingInt}
\int_X\psi(x)\,d\mu_\Lambda(x)=0\,,
\end{align}
which is shown in a similar way as 
\cite[Lemma 2.15]{hearGenusMumf}.
Since $\omega_X$ is, by construction, a nowhere vanishing analytic differential
$n$-form on $X$, it can locally on a chart $(U, \phi)$ containing $B$ be written  as
\[
\omega_X|_{U} (z)= g_U (\phi(z))\, dz
\]
with analytic function $g_U(z)$ defined over $O_K$. Its absolute value coincides
with that of a polynomial, non-vanishing everywhere in $U$. Hence,
\begin{align*}
\int_X\psi(x)\,&\absolute{\omega_X(x)}
=
\int_{\phi(U)}\psi_{\phi(B(a)),j}(z)\absolute{g_U(z)}\,dz
\\
&=C\int_{\phi(U)}\psi_{\phi(B(a)),j}(z)\,dz
\end{align*}
for some $C>0$, if the support $B(a)$ of $\psi$ is sufficiently small. In this case, the vanishing of the Kozyrev wavelet on any measurable
set containing its support is well known, for example, cf. 
\cite[Theorem 2.3.9]{XKZ2018}.

\begin{lem}\label{waveletSpec}
A wavelet $\psi$ supported on a sufficiently small ball $B(a)\subset X$ is an eigenfunction of the Vladimirov-Taibleson operator
 $\Delta^\alpha$ with eigenvalue
\[
\lambda_\psi=\int_{X\setminus B(a)}d_\Lambda(a,y)^{-n\alpha}\,d\mu_\Lambda(y)
+\mu_\Lambda(B(a))^{n-\alpha}
(1-q^{-n}(1+(-1)^n))
\]
for $\alpha\in\mathbb{R}$.
\end{lem}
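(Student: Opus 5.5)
The plan is the standard computation for Kozyrev wavelets, split into the contribution from outside and from inside the support. Write
\[
\Delta^\alpha\psi(x)=\int_X d_\Lambda(x,y)^{-n\alpha}(\psi(x)-\psi(y))\,d\mu_\Lambda(y),
\]
and split the domain of integration as $X=(X\setminus B(a))\sqcup B(a)$. The supports of the resulting terms fix the cases to treat.

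\emph{First case: $x\notin B(a)$.} Here $\psi(x)=0$, so only $-\int_{B(a)}d_\Lambda(x,y)^{-n\alpha}\psi(y)\,d\mu_\Lambda(y)$ remains. The key claim is that $d_\Lambda(x,y)=d_\Lambda(x,a)$ for all $y\in B(a)$ when $x\notin B(a)$. This follows from the description of minimising paths in the proof that $d_\Lambda$ is a metric: the path leaves the tree of balls through the same ancestor balls of $B(a)$, and only the final segment inside $B(a)$ differs. With this, we need that segment to be symmetric or of controlled contribution, so I would first try to show that for $B(a)$ sufficiently small the summands $\mu_\Lambda(U^\circ_{\mathfrak s}(x))$ depend only on the ball. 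This uses the local constancy of $\absolute{\omega_X}$ as in Lemma \ref{degreeConst}. The term then becomes a constant times $\int\psi\,d\mu_\Lambda=0$ by (\ref{vanishingInt}), so $\Delta^\alpha\psi(x)=0=\lambda_\psi\psi(x)$.

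\emph{Second case: $x\in B(a)$.} The outer part gives $\psi(x)\int_{X\setminus B(a)}d_\Lambda(x,y)^{-n\alpha}\,d\mu_\Lambda(y)$. By the same ultrametric-type constancy, $d_\Lambda(x,y)=d_\Lambda(a,y)$ for $y\notin B(a)$, which yields the first summand of $\lambda_\psi$. The inner part $\int_{B(a)}d_\Lambda(x,y)^{-n\alpha}(\psi(x)-\psi(y))\,d\mu_\Lambda(y)$ is transported to the chart $(U,\phi)$. By Lemma \ref{equalising}, for $B(a)$ small the measure $\mu_\Lambda$ is a constant $C$ times Haar measure there. Also, $d_\Lambda$ restricted to $B(a)$ is a function of the sub-ball join, computed from the measures $\mu_\Lambda$ of the nested balls, i.e.\ a radial function $\rho(\norm{\phi(x)-\phi(y)})$. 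Decompose $B(a)$ into the $q^n$ maximal sub-balls. On the sub-ball containing $x$, $\psi$ is constant, so $\psi(x)-\psi(y)=0$. On each other sub-ball $d_\Lambda(x,y)$ takes one constant value, the one determined by $B(a)$ itself, and $\sum\psi(y)$ over the complement of $x$'s sub-ball equals $-\psi(x)\mu(\text{sub-ball})$ by the character sum. This gives $\psi(x)$ times $\mu_\Lambda(B(a))$ times $d$ raised to $-n\alpha$. Expressing $d$ through $\mu_\Lambda(B(a))$ and the level counting factors $q^{-n}$ should produce $\mu_\Lambda(B(a))^{n-\alpha}(1-q^{-n}(1+(-1)^n))$.

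The main obstacle is the last bookkeeping step and the constancy claims for $d_\Lambda$. The path-length (\ref{pathlength}) sums measures of the sets $U^\circ_{\mathfrak s}$ along \emph{both} branches $\gamma_x,\gamma_y$. So $d_\Lambda(x,y)$ for $y$ in a sibling sub-ball is a sum of contributions $\mu_\Lambda(B(a))$ and $\mu_\Lambda$ of the sub-ball, counted on two sides. The parity-type factor $1+(-1)^n$ and the exponent $n-\alpha$ must come out of precisely this double counting. I would first compute $d_\Lambda(x,y)$ explicitly in terms of $\mu_\Lambda(B(a))$ and $q^{-n}$ for sibling sub-balls, then substitute into the sum, and adjust the smallness requirement on $B(a)$ so that all balls involved lie in one chart where $\absolute{g_U}$ is constant.
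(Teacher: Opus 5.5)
Your decomposition is the one the paper uses. For $x\notin B(a)$ you use constancy of $d_\Lambda(x,\cdot)$ on $B(a)$ plus (\ref{vanishingInt}); for $x\in B(a)$ you take the outer integral plus an inner integral reduced to the sibling sub-balls and a character sum. Up to the point where you obtain the inner integral as $\psi(x)\,\mu_\Lambda(B(a))\,d_B^{-n\alpha}$, your computation is right, granting the constancy properties of $d_\Lambda$ that you and the paper both assume. Here $d_B$ is the common value of $d_\Lambda(x,y)$ for $x,y$ in distinct maximal sub-balls of $B(a)$.

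The gap is the final ``bookkeeping'' step, which cannot be carried out. Since $d_B$ does not depend on $\alpha$, your term equals $\mu_\Lambda(B(a))$ at $\alpha=0$, whereas the target equals $\mu_\Lambda(B(a))^n(1-q^{-n}(1+(-1)^n))$. Summing path contributions along two branches can produce factors such as $2$, but never a sign $(-1)^n$.

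In fact the printed formula is false for $n\ge2$. At $\alpha=0$ one has $\Delta^0\psi(x)=\mu_\Lambda(X)\psi(x)-\int_X\psi\,d\mu_\Lambda=\mu_\Lambda(X)\psi(x)$. The stated eigenvalue at $\alpha=0$ is $\mu_\Lambda(X)-\mu_\Lambda(B(a))+\mu_\Lambda(B(a))^n(1-q^{-n}(1+(-1)^n))$, which is strictly smaller once $\mu_\Lambda(B(a))<1$. What your plan actually proves is that $\psi$ is an eigenfunction with eigenvalue $\int_{X\setminus B(a)}d_\Lambda(a,y)^{-n\alpha}\,d\mu_\Lambda(y)+\mu_\Lambda(B(a))\,d_B^{-n\alpha}$, and this passes the $\alpha=0$ test.

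The paper reaches its constant through the same computation, but with two slips.

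First, step $(**)$ asserts $\sum_{\ell\in\mathbb{F}_q^n\setminus\{0\}}(1-\chi(\pi^{-1}\tau(j)\tau(\ell)))=q^n-(1+(-1)^n)$. For $j\neq0$, however, $\ell\mapsto\chi(\pi^{-1}\tau(j)\tau(\ell))$ is a nontrivial character of $\mathbb{F}_q^n$. Its sum over $\ell\neq0$ is therefore $-1$, and the left-hand side equals $q^n$. This is exactly the cancellation $\int_{B(a)\setminus B'}\psi\,d\mu_\Lambda=-\psi(x)\mu_\Lambda(B')$ that you use.

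Second, with $\mu(\phi(B(a)))=q^{-nk}$, the prefactor $q^{\alpha nk-n(k+1)}\cdot q^n$ equals $\mu(\phi(B(a)))^{1-\alpha}$, not $\mu(\phi(B(a)))^{n-\alpha}$. At that step the paper also silently replaces $d_\Lambda$ by the chart norm $\norm{\phi(x)-z}_{K^n}$.

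For $n=1$ the two versions coincide, provided $d_B=\mu_\Lambda(B(a))$. So do not try to extract the parity factor from the path length. Keep your computation and correct the target formula instead.
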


\begin{proof}
First observe that the independence of the
choice of $x\in B(a)$ is immediate. Next, the formula is formally identically
with the one in \cite[Theorem 3]{Kozyrev2004}, and the proof of that carries over, because
in the first integral, it is not needed that $d_\Lambda(x, y)$ is an ultrametric, where
$x \in B(a)$ and $y \in X \setminus B(a)$. The proof uses only the fact that $d_\Lambda$ is an
ultrametric inside $B(a)$. Indeed, in this case, if $x\notin B(a)$, then
\begin{align*}
\Delta^\alpha\psi_{B(a),j} (x) &=
\int_X d_\Lambda(x,y)^{-n\alpha} (\psi_{B(a),j} (x) - \psi_{B(a),j} (y))\,d\mu_\Lambda(y)
\\
&= \psi_{B(a),j} (x)
\int_X d_\Lambda(x,y)^{-n\alpha} \,d\mu_\Lambda(y)
\\
&- d_\Lambda(x,B(a))^{-n\alpha}
\psi_{B(a),j}(y)\, d\mu_\Lambda (y) = 0
\end{align*}
by the vanishing property (\ref{vanishingInt}). If $x\in B(a)$, then
\begin{align*}
\Delta^\alpha\psi_{B(a),j}(x)
&=
\psi_{B(a),j}(x)
\int_{X\setminus B(a)}
d_\Lambda(x,y)^{-n\alpha} d\mu_\Lambda(y)
\\
&+
\int_{B(a)}d_\Lambda(x,y)^{-n\alpha} (\psi_{B(a),j}(x) - \psi_{B(a),j}(y))\, d\mu_\Lambda(y)\,.
\end{align*}
Obtain that the latter integral equals
\begin{align*}
I&=
\int_{\partial B(a)}
d_\Lambda(x,y)^{-n\alpha}(\psi_{B(a),j}(x)-\psi_{B(a),j}(y))\,d\mu_\Lambda(y)
\\
&=
\int_{\partial\phi(B(a))}
\norm{\phi(x) - z}_{K^n}^{-n\alpha}
(\psi_{\phi(B(a)),j}(x) - \psi_{\phi(B(a)),j}(z))
\absolute{g_U(z)}\absolute{dz}\,,
\end{align*}
where $(U,\phi)\in\mathcal{A}$ is a chart containing $B(a)$, and the canonical differential form $\omega_X$ takes on $\phi(U )$ the form $g_U(z)\,dz$. Notice again, that, since $B(a)$ is sufficiently small,
$\absolute{g_U(y)}$ is constant on the circle $\partial\phi(B(x))$. Hence, assuming that $B(x) =
B(a)$ has radius $q^{-k}$, obtain
\[
I = C\int_{\norm{\phi(x) - z}_{K^n}=q^{-k}}
\norm{\phi(x)-z}_K^{-n\alpha}
(\psi_{\phi(B(a)),j} (\phi(x)) - \psi_{\phi(B(a)),j} (z)) \absolute{dz}
\]
for some $C > 0$ and $k\in\mathbb{N}$. Points $z \in K^n$ with
\[
\norm{z-\phi(x)}_{K^n}=q^{-n}
\]
have the form
\[
z=\phi(x) + u + \epsilon
\]
with $u = (u_1,\dots,u_n) \in\pi^k O_K^n$
such that for $i\in\mathset{1,\dots,n}$:
\[
u_i = \pi^k\tau_i(\ell_i),\quad
\ell_i\in\mathbb{F}_q\,,
\]
and $\ell_i\in \mathbb{F}_q^\times$ for  at least one $i\in\mathset{1,\dots, n}$, and with $\epsilon\in\pi^{k+1}O_K$. Hence, 
\begin{align*}
I &= C
\sum\limits_{i=1}^n
\sum\limits_{\ell=(\ell_1,\dots,\ell_n)\in\atop\mathbb{F}_q\times\dots\times\mathbb{F}_q^\times\dots\times\mathbb{F}_q}
\int_{\pi^{k+1}O_K^n}
\norm{\tau(\ell)+\epsilon}_{K^n}^{-n\alpha}[\psi_{\phi(B(a)),j}(\phi(x))
\\
&-\psi_{\phi(B(a)),j}(x+\tau(\ell)+\epsilon)]\absolute{d\epsilon}
\\
&=
 Cq^{\alpha nk-n(k+1)}
\sum\limits_{\ell\in\mathbb{F}_q^n\setminus\mathset{\bar{0}}}
(\psi_{\phi(B(a)),j} (\phi(x))
- \psi_{\phi(B(a)),j} (\phi(x) + \tau(\ell)))
\\
&\stackrel{(*)}{=}
C q^{\alpha nk-n(k+1)}
\sum\limits_{\ell\in\mathbb{F}_q^n\setminus\mathset{\bar{0}}}
(1 - \chi(\pi^{-1}\tau(j)\tau(\ell))) \psi_{\phi(B(a)),j} (\phi(x))
\\
&\stackrel{(**)}{=}
C q^{\alpha nk-n(k+1)}
(q^n - (1 + (-1)^n))\psi_{\phi(B(a)),j} (\phi(x))
\\
&= 
C\mu(\phi(B(a)))^{n-\alpha}
(1- q^{-n} (1 + (-1)^n)
\psi_{\phi(B(a)),j}(\phi(x))
\\
&= \mu_\Lambda(B(a))^{n-\alpha}
( 1 - q^{-n} (1 + (-1)^n )\psi_{B(a),j} (x)\,,
\end{align*}
where $(*)$ follows from the property of Kozyrev wavelets:
\[
\psi_{B(a),j} (x + \pi^k\tau(\ell)) = \chi(\pi^{-1} \tau(j)\tau(\ell))\psi_{B(a),j} (x)
\]
for $x\in K^n$, and
$(**)$ follows from
\[
\sum\limits_{\ell\in\mathbb{F}_q^n\setminus\mathset{\bar{0}}}
(1 - \chi(\pi^{-1}\tau(j)\tau (\ell))) 
= q^n - (1 + (-1)^n)\,,
\]
which follows by induction on $n\in\mathbb{N}$. 
This now implies the asserted eigenvalue.
\end{proof}

\begin{lem}\label{waveletSpec_kNNh}
A wavelet $\psi$ supported on a sufficiently small ball $B(a)\subset X$ is an eigenfunction of the height-$k$-nearest neighbour Laplacian $\Delta_k^\alpha$ with eigenvalue 
\[
\lambda_\psi=\int_{\Star_k(a)\setminus B(a)}
\mu_\Lambda(a\wedge y)^{-n\alpha}\,d\mu_\Lambda(a)
+\mu_\Lambda(B(a))^{n-\alpha}(1-q^{-n}(1+(-1)^n))
\]
for $\alpha\in\mathbb{R}$.
\end{lem}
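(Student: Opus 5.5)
The plan is to mimic the proof of Lemma \ref{waveletSpec} for the kernel $T_k^\alpha$. Two features of that kernel need attention. First, the restriction to $\Star_k(x)$. Second, the degree factor $\deg_k(y)/\deg_k(x)$.

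The first step is to remove the degree factor. Choose $k$ large and $B(a)$ small enough so that, by Lemma \ref{degreeConst}, $\deg_k$ is constant on every ball of $X$ that is relevant. In particular it is constant on $B(a)$. Since $\psi$ is supported in $B(a)$, the ratio $\deg_k(y)/\deg_k(x)$ equals $1$ whenever $x,y\in B(a)$. Next, note that $B(a)\subseteq\Star_k(x)$ for $x\in B(a)$ once $B(a)$ is small, since then $\height(x\wedge y)\ge\height(B(a))\ge k$. On $\Star_k(x)$ the geodetic distance $d_\Lambda(x,y)$ depends only on the join $x\wedge y$. I will identify it with the quantity $\mu_\Lambda(x\wedge y)$ appearing in the statement, up to the normalisation built into (\ref{pathlength}). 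This identification is the point I expect to need the most care, and I will check it directly from the definition of the path length inside a single tree of balls.

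The next step is the case $x\notin B(a)$. Here $\psi(x)=0$, so only the term $-\int T_k^\alpha(x,y)\psi(y)\,d\mu_\Lambda(y)$ remains. For $y\in B(a)$, the join $x\wedge y$ is the same element for all such $y$, because $B(a)$ is a ball not containing $x$. Hence $T_k^\alpha(x,y)$ is constant in $y\in B(a)$. Indeed, either all of $B(a)$ lies in $\Star_k(x)$ or none of it does, and $\deg_k$ is constant on $B(a)$. Then (\ref{vanishingInt}) gives $0$, which is consistent with an eigenfunction.

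The third step is the case $x\in B(a)$. Split $\int_X$ into $\int_{\Star_k(x)\setminus B(a)}$ and $\int_{B(a)}$. On the first region $\psi(y)=0$ and $\deg_k(y)=\deg_k(x)$ for $y\in\Star_k(x)$, at least for large $k$ where stars lie in a single regular tree. That piece therefore contributes $\psi(x)\int_{\Star_k(x)\setminus B(a)}\mu_\Lambda(x\wedge y)^{-n\alpha}\,d\mu_\Lambda(y)$. This integral does not depend on the choice of $x\in B(a)$, because $x\wedge y=a\wedge y$ for $y\notin B(a)$. So it equals the first term of the stated eigenvalue. The integral over $B(a)$ has exactly the form of the integral $I$ in the proof of Lemma \ref{waveletSpec}. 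That computation goes through verbatim: pass to a chart, use that $\absolute{g_U}$ is constant, decompose into spheres, and apply the character sums $(*)$ and $(**)$. It yields $\mu_\Lambda(B(a))^{n-\alpha}(1-q^{-n}(1+(-1)^n))\psi(x)$.

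The main obstacle is the "sufficiently small / sufficiently large $k$" bookkeeping. I have to make sure that $\deg_k$ is constant not only on $B(a)$ but on all of $\Star_k(a)$, so that the ratio disappears in the outer integral. To get this I plan to take $k$ beyond $\dim N(\mathcal{A})$, so that stars are balls inside one rooted tree (proof of Lemma \ref{degreeConst}). Together with the local constancy of $\absolute{\omega_X}$, the regularity of that tree then gives constancy of $\deg_k$ on the star.
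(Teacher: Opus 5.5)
Your strategy is the paper's. Its proof only notes that $\deg_k$ is constant on $B(a)$ (Lemma~\ref{degreeConst}) and that the proof of Lemma~\ref{waveletSpec} then carries over because of (\ref{vanishingInt}). Two of your additions go beyond it. The first is the explicit case $x\notin B(a)$. The second is the observation that the first term needs $\deg_k(y)/\deg_k(a)=1$ on all of $\Star_k(a)\setminus B(a)$, not just on $B(a)$, which the paper never addresses. Taking $k$ large, in particular $k>\dim N(\mathcal{A})$, makes $\Star_k(x)\cup\mathset{x}$ a single ball on which $\absolute{\omega_X}$ is constant, and this does secure it. It does, however, leave the range $k\le\dim N(\mathcal{A})$ stipulated in the definition of $\kNNh(X)$, as the paper itself tacitly does.

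The genuine gap is the step you postpone, and it cannot be closed as planned. First, nothing in (\ref{pathlength}) makes $d_\Lambda(x,y)$ equal or proportional to $\mu_\Lambda(x\wedge y)$. A path $x\leadsto y$ there consists of the full chains $\gamma_x,\gamma_y$ starting at root simplices of $N(\mathcal{A})$, plus a nerve part. The paper makes the same substitution silently, replacing $d_\Lambda(a,y)$ in Lemma~\ref{waveletSpec} by $\mu_\Lambda(a\wedge y)$ here.

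Second, your argument mixes two kernels. Since $B(a)\subseteq\Star_k(x)\cup\mathset{x}$, whatever function $w(x\wedge y)$ of the join you use on the star also governs the inner integral. Because $\psi$ is constant on the child of $B(a)$ containing $x$ and $\int_{B(a)}\psi\,d\mu_\Lambda=0$, that integral is simply $w(B(a))\,\mu_\Lambda(B(a))\,\psi(x)$, with no character sums needed. It is not the integral $I$ of Lemma~\ref{waveletSpec}. That integral uses the chart kernel $\norm{\phi(x)-z}_{K^n}^{-n\alpha}$, which on the relevant sphere equals $\mu(\phi(B(a)))^{-\alpha}$, not $\mu_\Lambda(B(a))^{-n\alpha}$.

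Third, even that verbatim computation does not give the displayed constant. For $j\neq0$, the map $\ell\mapsto\chi(\pi^{-1}\tau(j)\tau(\ell))$ is a nontrivial character of $\mathbb{F}_q^n$, so the sum in $(**)$ equals $q^n$; the paper's value $q^n-(1+(-1)^n)$ is wrong for even $n$. Moreover $Cq^{\alpha nk-n(k+1)}q^n=C\mu(\phi(B(a)))^{1-\alpha}$, so the exponent is $1-\alpha$, not $n-\alpha$.

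Granting that $T_k^\alpha(x,y)=w(x\wedge y)$ on the star, what your method actually proves is
\[
\lambda_\psi=\int_{\Star_k(a)\setminus B(a)}w(a\wedge y)\,d\mu_\Lambda(y)+w(B(a))\,\mu_\Lambda(B(a))\,.
\]
This agrees with the stated formula for $n=1$. For $n\ge2$ it generally does not, because the two displayed terms correspond to different kernels. The paper's one-line proof does not resolve this either.
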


\begin{proof}
If the support $B(a)$ is sufficiently small, then the kernel function $T_k^\alpha(x,y)$ depends only on an ultrametric distance
on the support $B(a)$ of the wavelet, because the degree function $\deg_k(x)$ is
constant on $B(a)$, cf.\ Lemma \ref{degreeConst}. It now follows that the proof of 
Lemma \ref{waveletSpec} carries over, because the necessary vanishing integral property (\ref{vanishingInt})
holds true for the wavelet $\psi$. This proves the asserted value for the wavelet eigenvalue $\lambda_\psi$.
\end{proof}

\begin{rem}
The eigenvalue formulae in Lemmas \ref{waveletSpec} and \ref{waveletSpec_kNNh} extend the wavelet eigenvalue formula from \cite[Theorem 3]{Kozyrev2004} to more general domains, as given by $p$-adic analytic $n$-manifolds.
\end{rem}

\begin{thm}
The operator $\Delta^\alpha$ acting on the Hilbert space 
$L^2(X,\mu_\Lambda)$ is self-
adjoint, positive semi-deﬁnite, and yields an orthogonal decomposition
\[
L^2(X,\mu_\Lambda)
= W_0\oplus L^2(X,\mu_\Lambda)_w \,,
\]
and an orthonormal basis consisting of eigenfunctions of $\Delta^\alpha$ whose wavelet part
spans the summand 
$L^2(X,\mu_\Lambda)_w$, and whose orthogonal complement spans the
finite-dimensional closed subspace $W_0$, 
where $\alpha\in\mathbb{R}$. Each eigenvalue
has only finite multiplicity. For $\alpha > 0$, the spectrum of $\Delta^\alpha$ is a point spectrum.
For $\alpha\le 0$, the operator $\Delta^\alpha$ is a compact operator, and $0$ is an accumulation point
of its spectrum. 
\end{thm}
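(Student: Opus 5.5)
The plan is to follow the structure of the analogous results on compact subdomains of $K^n$ and on $p$-adic manifolds via atlases \cite{DiffMfp}. The backbone is the wavelet eigenvalue computation of Lemma \ref{waveletSpec}, combined with a finite-dimensional remainder.

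\emph{Symmetry and positivity.} The kernel $d_\Lambda(x,y)^{-n\alpha}$ is symmetric in $x,y$ because $d_\Lambda$ is a metric. For $u,v\in\mathcal{D}(X)$, Fubini together with the change of variables $x\leftrightarrow y$ gives the Dirichlet form identity
\[
\langle\Delta^\alpha u,v\rangle=\frac12\int_X\int_X d_\Lambda(x,y)^{-n\alpha}(u(x)-u(y))\overline{(v(x)-v(y))}\,d\mu_\Lambda(y)\,d\mu_\Lambda(x)\,.
\]
This yields symmetry and $\langle\Delta^\alpha u,u\rangle\ge0$. Integrability of the kernel near the diagonal is not an issue, because on $\mathcal{D}(X)$ the difference $u(x)-u(y)$ vanishes when $y$ is close to $x$. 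Since $X$ is compact, $\mathcal{D}(X)$ is dense in $L^2(X,\mu_\Lambda)$.

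\emph{Spectral decomposition.} Self-adjointness will come from an explicit diagonalisation in an orthonormal basis, which makes $\Delta^\alpha$ a multiplication operator on its maximal domain. Choose a level $m$ large enough that every ball $B$ of radius $\le q^{-m}$ is ``sufficiently small'' in the sense of Lemma \ref{waveletSpec}. This is possible by compactness and the local constancy of $\absolute{\omega_X}$. The balls of radius $q^{-m}$ form a finite disjoint cover $B_1,\dots,B_N$ of $X$.

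Let $L^2(X,\mu_\Lambda)_w$ be the closed span of the wavelets $\psi_{B,j}$ with $B\subseteq B_i$ of radius $\le q^{-m}$. Let $W_0$ be the $N$-dimensional space spanned by the indicators $1_{B_i}$. Completeness of the union then follows chartwise from completeness of the Kozyrev basis in $L^2$ of a ball \cite{Kozyrev2004}. The local density $\absolute{g_U}$ is constant on each $B_i$, so it only rescales the measure there. The wavelets are orthogonal to $W_0$ by (\ref{vanishingInt}).

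By Lemma \ref{waveletSpec} each wavelet is an eigenfunction. It remains to see that $\Delta^\alpha$ maps $W_0$ into $W_0$. The proof of Lemma \ref{waveletSpec} computing $\Delta^\alpha\psi(x)$ for $x\notin B(a)$ uses that $d_\Lambda(x,\cdot)$ is constant on $B(a)$; the same fact shows $\Delta^\alpha 1_{B_i}$ is constant on each $B_l$. Hence $\Delta^\alpha|_{W_0}$ is a symmetric positive semi-definite matrix with respect to the inner product weighted by $\mu_\Lambda(B_i)$. Diagonalising it produces the remaining orthonormal eigenfunctions.

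\emph{Behaviour of the eigenvalues.} Write the eigenvalue of Lemma \ref{waveletSpec} as $\lambda_\psi=c_1+c_2\,\mu_\Lambda(B(a))^{n-\alpha}$.
\begin{itemize}
\item The term $c_1=\int_{X\setminus B(a)}d_\Lambda(a,y)^{-n\alpha}\,d\mu_\Lambda(y)$ depends only on $B(a)$. It grows or stays bounded according to the sign of $\alpha$, because $d_\Lambda(a,y)$ for $y$ near $B(a)$ is comparable to the radius of $B(a)$. This follows from the path length (\ref{pathlength}) being a sum of measures of annuli inside a ball.
\item For each radius only finitely many balls, and hence finitely many eigenvalues, occur.
\item For $\alpha>0$ the eigenvalues tend to $\infty$ as the radius shrinks, so each has finite multiplicity and the spectrum is the closure of the eigenvalue set, which is discrete and pure point.
\item For $\alpha\le0$ the eigenvalues tend to $0$, and the finite-level truncations converge in operator norm. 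Hence $\Delta^\alpha$ is compact, and $0$ is an accumulation point of the spectrum.
\end{itemize}

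\emph{Main obstacle.} I expect the hardest step to be the uniform comparison of $d_\Lambda(x,y)$ with $\mu_\Lambda$ of the smallest ball containing $x$ and $y$. This comparison is needed to get the asymptotics of $c_1$, and hence the finite multiplicity and limit behaviour, uniformly over all small balls. I would first prove it within each tree of balls, using regularity of the tree and local constancy of $\absolute{\omega_X}$. The comparison is a two-sided bound $d_\Lambda(x,y)\asymp q^{-nk}$ when $x\wedge y$ has radius $q^{-k}$, with constants depending only on $m$. Finiteness of $N(\mathcal{A})$ then handles the rest.

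For $\alpha=0$ one must also check the borderline case. Here $c_2\mu_\Lambda(B)^n\to0$, but $c_1=\mu_\Lambda(X\setminus B)\to\mu_\Lambda(X)$. I would verify this against the statement and, if needed, treat $\alpha=0$ via $\Delta^0=\mu_\Lambda(X)I-P$, where $P$ is the projection onto constants. Under that reading the claimed compactness would fail at $\alpha=0$, and the statement would need adjusting there.
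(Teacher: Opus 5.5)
Your handling of the decomposition, self-adjointness and positivity follows the paper's route. That route is: wavelet eigenfunctions from Lemma~\ref{waveletSpec}, an invariant finite-dimensional $W_0$ of indicators on which $\Delta^\alpha$ is a weighted complete-graph Laplacian, and density of $\mathcal{D}(X)$. Your Dirichlet-form identity is a cleaner way to get positive semi-definiteness than the paper's appeal to graph eigenvalues.

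The genuine gap is in the eigenvalue asymptotics, where you let the sign of $\alpha$ decide whether $c_1$ blows up.
\begin{itemize}
\item Put $m(a)=\int_X d_\Lambda(a,y)^{-n\alpha}\,d\mu_\Lambda(y)$. The integrand of $c_1$ is nonnegative, so $c_1\uparrow m(a)$ by monotone convergence as $B(a)$ shrinks to $a$.
\item For every $\alpha\le 0$ the kernel is bounded on the compact $X$, so $0<m(a)<\infty$.
\item With your own comparison, the level-$j$ annulus around $a$ contributes $\asymp q^{-nj}\cdot q^{n^2\alpha j}=q^{nj(n\alpha-1)}$. So $m(a)<\infty$ for all $\alpha<1/n$, not only for $\alpha\le 0$.
\item For $\norm{\psi}=1$ the local term $\lambda_\psi-c_1$ equals $\frac12\int_{B(a)}\int_{B(a)}d_\Lambda(x,y)^{-n\alpha}\absolute{\psi(x)-\psi(y)}^2\,d\mu_\Lambda(y)\,d\mu_\Lambda(x)\ge0$. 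It tends to $0$ in this range.
\end{itemize}
Hence wavelets with supports shrinking to $a$ are orthonormal eigenfunctions whose eigenvalues tend to $m(a)>0$. Equivalently, $\Delta^\alpha=M_m-K$, where $K$ is the integral operator with kernel $d_\Lambda^{-n\alpha}$ (Hilbert--Schmidt for $\alpha\le0$) and $M_m$ is multiplication by $m$.

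For all $\alpha\le 0$ this has three consequences.
\begin{itemize}
\item $\Delta^\alpha$ is bounded but \emph{not} compact.
\item $0$ is isolated. Since $c_1$ only increases as the ball shrinks, every wavelet eigenvalue is at least the value of $c_1$ on the level-$m$ ball containing its support, one of finitely many positive numbers. So $0$ is an isolated eigenvalue (the constants), not an accumulation point.
\item At $\alpha=0$ every wavelet has eigenvalue $\mu_\Lambda(X)$; precisely, $\Delta^0=\mu_\Lambda(X)(I-P)$ with $P$ the orthogonal projection onto constants. So multiplicities are infinite.
\end{itemize}
Thus your bullet ``eigenvalues tend to $0$, truncations converge in norm'' fails for every $\alpha\le0$, not only at the borderline $\alpha=0$ you flagged. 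Likewise ``eigenvalues tend to $\infty$'' fails for $0<\alpha<1/n$: there $\Delta^\alpha$ is bounded and its eigenvalues accumulate at the positive values $m(a)$.

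Following the paper will not close this. For $\alpha\le0$ it infers compactness from boundedness plus finite multiplicities, and reads accumulation at $0$ off Lemma~\ref{waveletSpec}. But a diagonalisable bounded operator is compact only if its eigenvalues tend to $0$. And the first summand of that lemma's eigenvalue formula is exactly your $c_1$, which tends to $m(a)>0$. The paper also asserts unboundedness for every $\alpha>0$ without argument.

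So your suspicion is right, but it reaches further than $\alpha=0$. The compactness and accumulation claims for $\alpha\le 0$, finite multiplicity at $\alpha=0$, and the discrete/unbounded claim for small $\alpha>0$ cannot be proved as stated. What your argument does establish is:
\begin{itemize}
\item the decomposition $W_0\oplus L^2(X,\mu_\Lambda)_w$;
\item self-adjointness and positive semi-definiteness;
\item given your two-sided comparison, an unbounded discrete spectrum with finite multiplicities once $n\alpha\ge1$.
\end{itemize}
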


\begin{proof}
The finite-dimensional space $W_0$ is defined to be spanned by copies of indicator functions on the largest possible ball $B\subset X$ which is sufficiently small that $\deg_k$ is constant on $B$, and such that these copies are a finite disjoint covering of $X$. Such a ball exists by to Serre's theorem \cite[Th\'eor\`eme (1)]{Serre1965}. 

\smallskip
The assertions are essentially a consequence of 
Lemma \ref{waveletSpec}, and the observation that the symmetric operator $\Delta^\alpha$ restricted to $W_0$ acts as a symmetric Laplacian on a complete weighted graph. Notice that the direct sum on
the right-hand side is the full space $L^2(X,\mu_\Lambda)$, because any locally constant
function orthogonal to all wavelets must be constant on the maximal balls in
$X$ whose indicators span $W_0$. 
Hence, by density of $\mathcal{D}(X)$ in 
$L^2(X,\mu_\Lambda)$, the equality of spaces follows.
The self-adjointness of $\Delta^\alpha$ now follows, as well as positive-semidefiniteness:
the wavelet eigenvalues are all positive, as well as the graph eigenvalues, except the one corresponding to the constant functions which is zero. Finally,
the eigenvalues exhibited in
Lemma \ref{waveletSpec} clearly have only finite multiplicity. Hence, all eigenvalues of $\Delta^\alpha$ have only finite multiplicity. If $\alpha>0$,
then $\Delta^\alpha$ is a densely defined unbounded operator on $L^2(X,\mu_\Lambda)$, and its spectrum contains no accumulation point. Hence, it is a point spectrum. If $\alpha \le 0$, then compactness of $\Delta^\alpha$ follows from boundedness and the finite multiplicities of the eigenvalues. Lemma \ref{waveletSpec} shows that $0$ is an accumulation
point of the spectrum of $\Delta^\alpha$ in this case. This proves the theorem.
\end{proof}

\begin{thm}\label{kNNh_Spectrum}
The operator $\Delta^\alpha_k$ acting on the Hilbert space $L^2(X,\mu_\Lambda)$ is
diagonalisable, its spectrum is 
non-negative, and each eigenvalue has finite multiplicity. If $\alpha>0$, then its spectrum is discrete.
\end{thm}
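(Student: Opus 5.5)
The plan follows the proof of the preceding theorem for $\Delta^\alpha$, with two adjustments. The kernel $T_k^\alpha$ is not symmetric because of the factor $\deg_k(y)/\deg_k(x)$, and the operator only couples points within $\Star_k(x)$.

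\emph{Wavelet part.} Choose $k$ (or the ball size) so that, by Lemma \ref{degreeConst}, $\deg_k$ is constant on every ball of some fixed small radius. The balls of that radius form a finite disjoint covering $B_1,\dots,B_N$ of $X$, which is possible by Serre's theorem \cite[Th\'eor\`eme (1)]{Serre1965}. Every wavelet supported in a smaller ball is then an eigenfunction by Lemma \ref{waveletSpec_kNNh}. Its eigenvalue is a sum of a non-negative integral and the term $\mu_\Lambda(B(a))^{n-\alpha}(1-q^{-n}(1+(-1)^n))$, which is non-negative. For $x\notin B(a)$, $\Delta_k^\alpha\psi(x)=0$: on $B(a)$ the kernel $T_k^\alpha(x,\cdot)$ is constant in $y$, since $d_\Lambda(x,y)$ and $\deg_k(y)$ are constant there and $\Star_k(x)\cap B(a)$ is either empty or all of $B(a)$. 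The vanishing integral (\ref{vanishingInt}) then kills the term. Since these wavelets form an orthonormal system, they span a closed subspace $L^2(X,\mu_\Lambda)_w$. Its orthogonal complement $W_0$ is spanned by $1_{B_1},\dots,1_{B_N}$, by the same density argument as in the previous proof.

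\emph{Finite-dimensional part.} $\Delta_k^\alpha$ maps $W_0$ into itself, because for $x\in B_i$ the value $\Delta_k^\alpha 1_{B_j}(x)$ does not depend on the choice of $x$. On $W_0$ it is an $N\times N$ matrix of the form $D^{-1}LD$, where $D=\mathrm{diag}(\deg_k(B_i))$ and $L$ is the weighted graph Laplacian with symmetric weights
\[
w_{ij}=\int_{B_i}\int_{B_j}d_\Lambda(x,y)^{-n\alpha}1_{y\in\Star_k(x)}\,d\mu_\Lambda(y)\,d\mu_\Lambda(x),
\]
which is symmetric in $i,j$ because the edge relation is undirected. A symmetric graph Laplacian is positive semidefinite, and conjugation preserves the spectrum, so $\Delta_k^\alpha|_{W_0}$ is diagonalisable with non-negative eigenvalues. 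Combining this with the wavelet basis gives a basis of eigenfunctions of $L^2(X,\mu_\Lambda)$. It is a Riesz basis rather than an orthonormal one, since $\Delta_k^\alpha$ is not self-adjoint; it is self-adjoint only in $L^2(X,\deg_k^{2}\mu_\Lambda)$-type weighted norms. This is the sense of \emph{diagonalisable}.

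\emph{Multiplicities and discreteness.} Wavelet eigenvalues depend only on the support ball $B(a)$, through its radius and position. For a fixed value $\lambda$, only finitely many radii can occur when $\alpha>0$, because the second term grows like $\mu_\Lambda(B(a))^{-\alpha}\cdot\mathrm{const}$ and the first term is non-negative. Each radius gives finitely many balls and $q^n-1$ wavelets per ball, so multiplicities are finite. The same growth shows that the eigenvalues tend to $+\infty$, so the spectrum is discrete for $\alpha>0$.

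\emph{Main obstacle.} The delicate step is finite multiplicity when $\alpha\le0$. Eigenvalues then accumulate at the value of the first integral, and distinct radii could in principle produce equal eigenvalues. I would first try to show that the map from radius to eigenvalue is strictly monotone on each tree of balls, using that the $\Star_k$ integral increases as the ball shrinks. If that fails, I would restrict the finite-multiplicity claim to nonzero eigenvalues, or to those away from the accumulation points.
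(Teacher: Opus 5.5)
Your architecture is the paper's. Small-support wavelets are eigenfunctions by Lemma~\ref{waveletSpec_kNNh}. Their orthogonal complement is spanned by the indicators $1_{B_i}$ of the maximal supports. On that span, $\Delta_k^\alpha$ is a positive diagonal matrix times a symmetric graph Laplacian (the paper's $C_k=D_kA_k$ with detailed balance $C_kD_k=D_kC_k^\top$). Multiplicities and discreteness are then read off from the wavelet eigenvalues.

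\textbf{The finite-dimensional block.} Your identification of it as $D^{-1}LD$, with $L$ the graph Laplacian of the $w_{ij}$, is wrong. That matrix has kernel spanned (componentwise) by $D^{-1}\mathbf{1}$, which in general is not the coordinate vector of $1_X$, whereas $\Delta_k^\alpha 1_X=0$. Write $\delta_i=\deg_k|_{B_i}$ and $\mu_i=\mu_\Lambda(B_i)$. For $x\in B_i$ one gets $\Delta_k^\alpha\bigl(\sum_j c_j1_{B_j}\bigr)(x)=(\delta_i\mu_i)^{-1}\sum_j\delta_jw_{ij}(c_i-c_j)$. So the matrix is $G^{-1}L'$, where $G=\mathrm{diag}(\delta_i^2\mu_i)$ and $L'$ is the graph Laplacian with symmetric weights $\delta_i\delta_jw_{ij}$. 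It is similar to the positive semidefinite matrix $G^{-1/2}L'G^{-1/2}$, so your conclusion survives. Your remark about $L^2(X,\deg_k^2\mu_\Lambda)$ is the cleanest global way to say this. Since $\deg_k(x)^2T_k^\alpha(x,y)$ is symmetric,
\[
\langle\Delta_k^\alpha u,v\rangle_{L^2(X,\deg_k^2\mu_\Lambda)}=\frac12\int_X\int_X\deg_k(x)^2\,T_k^\alpha(x,y)\,\bigl(u(x)-u(y)\bigr)\,\overline{\bigl(v(x)-v(y)\bigr)}\,d\mu_\Lambda(y)\,d\mu_\Lambda(x)\,,
\]
which gives symmetry and non-negativity at once. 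Because $\deg_k$ takes finitely many positive values, the two norms are equivalent, and this is exactly your Riesz-basis statement.

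\textbf{Multiplicity and discreteness: the genuine gaps.} For $\alpha>0$, the term you quote, $\mu_\Lambda(B(a))^{n-\alpha}(1-q^{-n}(1+(-1)^n))$, tends to $0$ as $B(a)$ shrinks whenever $\alpha<n$. It does not grow like $\mu_\Lambda(B(a))^{-\alpha}$. Any growth must therefore come from the first integral. Split $\Star_k(a)\setminus B(a)$ into shells on which $a\wedge y=B_m(a)$ is fixed. Each shell contributes about $\mu_\Lambda(B_m(a))^{1-n\alpha}$, so that integral is unbounded only if $n\alpha\ge 1$. Taken at face value, the lemma even gives bounded wavelet eigenvalues for $0<\alpha<1/n$. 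You need a real estimate here. The paper only asserts that both properties ``follow from'' Lemma~\ref{waveletSpec_kNNh}, so it does not supply one either.

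\textbf{The case $\alpha\le0$.} Your suspicion is right, and at $\alpha=0$ the finite-multiplicity claim is false. Take $B(a)$ small with $\deg_k$ constant on it, and $x\in B(a)$. Then $B(a)\setminus\{x\}\subset\Star_k(x)$, so $\Delta_k^0\psi(x)=\psi(x)\int_{\Star_k(x)}\frac{\deg_k(y)}{\deg_k(x)}\,d\mu_\Lambda(y)-\int_{B(a)}\psi\,d\mu_\Lambda$, and the last integral vanishes. The eigenvalue is a positive number independent of the radius of $B(a)$, so infinitely many wavelets share it. Monotonicity in the radius cannot work there, and restricting to nonzero eigenvalues does not help. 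The claim has to exclude $\alpha=0$, or be restricted to eigenvalues outside the essential spectrum. For $\alpha<0$, monotonicity is the right idea. However, the two summands move in opposite directions as $B(a)$ shrinks, so you must compare them; the $\Star_k$ integral alone does not decide it.
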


\begin{proof}
The supports of wavelets $\psi$ on $X$ being balls, it follows from  the local constancy of the degree function $\deg_k$  that
all wavelets on $X$ with sufficiently small support form an orthonormal system in 
$L^2(X,\mu_\Lambda)$ consisting of
eigenfunctions for $\Delta_k^\alpha$. The orthogonal complement $W_k^\perp$ of the closure of
the span $W_k$ of these wavelets is finite-dimensional due to the compactness of $X$.
Since $\deg_k$ is constant on the supports of the wavelets spanning $W_k$, it follows that $W_k^\perp$ is also invariant under $\Delta_k^\alpha$, because it is spanned by the indicators of the maximal supports of these wavelets. On that space, $\Delta_k$ takes the form
\[
\Delta^\alpha_k= D_kA_k 
=: C_k\,,
\]
where $D_k$ is a diagonal matrix, and $A_k$ is a symmetric matrix. Hence, the
detailed balance property
\begin{align*}
C_k D_k = D_k C_k^\top
\end{align*}
in the terminology of 
\cite[(4.2)]{vanKampen2007} is valid. Hence, $C_k$ is diagonalisable, cf.
\cite[Chapter 7]{vanKampen2007}. The diagonalisability of $\Delta^\alpha_k$ now follows. The assertions
about the discreteness of the spectrum of $\Delta_k^\alpha$, and the finiteness of eigenvalue
multiplicities also follow due to Lemma \ref{waveletSpec_kNNh}. By that lemma, the wavelet eigenvalues are
positive, and the eigenvalues of the restriction of $\Delta_k^\alpha$ to $W_k^\perp$
are non-negative, because that operator is a graph Laplacian. The discreteness of the spectrum of $\Delta_k^\alpha$ for $\alpha>0$ also follows from Lemma \ref{waveletSpec_kNNh}.
\end{proof}

\section{Boundary Value Problems}

\subsection{Markov property}

The \emph{Sobolev space} $W^{1,2}_H(X,\mu_\Lambda)$ with respect to an operator $H$ on $L^2(X,\mu_\Lambda)$ is defined as
\[
W^{1,2}_H(X,\mu_\Lambda)=\mathset{f\in L^2(X,\mu_\Lambda)\mid \norm{H f}_{L^2(X,\mu_\Lambda)}<\infty}
\]
and is endowed with the Sobolev norm
\[
\norm{f}_{W^{1,2}_H(X,\mu_\Lambda)}
=\left(
\norm{f}_{L^2(X,\mu_\Lambda)}^2+
\norm{Hf}_{L^2(X,\mu_\Lambda)}^2
\right)^{\frac12}
\]
for $f\in W^{1,2}_H(X,\mu_\Lambda)$.

\begin{lem}
The Sobolev space $W^{1,2}(X,\mu_\Lambda)$ is a Hilbert space w.r.t.\ the Sobolev norm $\norm{\cdot}_{W^{1,2}(X,\mu_\Lambda)}$.
\end{lem}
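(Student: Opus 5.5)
The plan is to identify $W^{1,2}_H(X,\mu_\Lambda)$, for $H$ one of the operators $\Delta^\alpha$ or $\Delta^\alpha_k$, with the domain of a closed operator equipped with its graph norm. The argument is then the standard fact that the domain of a closed operator is complete in the graph norm. For the operators at hand, closedness comes from the spectral results already proved.

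\textbf{Step 1: inner product and operator.} The Sobolev norm comes from the inner product
\[
\langle f,g\rangle_{W^{1,2}_H}=\langle f,g\rangle_{L^2(X,\mu_\Lambda)}+\langle Hf,Hg\rangle_{L^2(X,\mu_\Lambda)}\,.
\]
This is sesquilinear and positive definite because the $L^2$-part already is. It remains to make sense of $Hf$ for general $f\in L^2(X,\mu_\Lambda)$. By the preceding Theorem (for $\Delta^\alpha$), respectively Theorem \ref{kNNh_Spectrum} (for $\Delta^\alpha_k$), there is a basis of eigenfunctions: the wavelets together with a basis of the finite-dimensional space $W_0$, respectively $W_k^\perp$. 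Writing $f=\sum_\psi c_\psi\psi$, define
\[
Hf=\sum_\psi\lambda_\psi c_\psi\psi\,,
\]
where $\lambda_\psi$ are the eigenvalues. The condition $\norm{Hf}<\infty$ then means
\[
\sum_\psi\absolute{\lambda_\psi c_\psi}^2<\infty\,.
\]
In the case $\Delta^\alpha_k$ the finite-dimensional block is only diagonalisable, not orthogonally. There one would use that a finite-dimensional operator is bounded, so the graph norm on that block is equivalent to the $L^2$-norm and completeness there is automatic.

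\textbf{Step 2: closedness.} Let $(f_m)$ be Cauchy in the Sobolev norm. Then $f_m\to f$ and $Hf_m\to g$ in $L^2(X,\mu_\Lambda)$. Pairing with each eigenfunction $\psi$ in the wavelet part gives
\[
\langle g,\psi\rangle=\lim_{m\to\infty}\lambda_\psi\langle f_m,\psi\rangle=\lambda_\psi\langle f,\psi\rangle\,.
\]
Hence $g=Hf$ coefficientwise on the wavelet part, and on the finite block this follows by continuity. Therefore $f$ lies in $W^{1,2}_H(X,\mu_\Lambda)$ and $f_m\to f$ in the Sobolev norm. Equivalently, $W^{1,2}_H$ is unitarily isomorphic to the weighted sequence space $\ell^2\bigl((1+\absolute{\lambda_\psi}^2)\bigr)$, which is complete by the standard Riesz–Fischer argument.

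\textbf{Main obstacle.} The delicate point is Step 1: the paper defines $H$ only on $\mathcal{D}(X)$, so one must commit to the spectral (maximal) extension, and for $\Delta^\alpha_k$ one must handle the non-orthogonal finite block. For $\alpha\le0$ the operator is bounded, so the Sobolev norm is equivalent to the $L^2$-norm and the claim is immediate. The real content is therefore the case $\alpha>0$, where the diagonal description above does the work.
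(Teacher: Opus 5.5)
Your proposal is correct and takes the same route as the paper. The paper defines the same inner product $\langle f,g\rangle_{L^2(X,\mu_\Lambda)}+\langle Hf,Hg\rangle_{L^2(X,\mu_\Lambda)}$ and then omits the completeness proof as ``standard''; that omitted step is precisely the closed-operator/graph-norm argument you carry out via the eigenbasis expansion, with a weighted $\ell^2$ space on the wavelet part and a bounded finite-dimensional block on $W_0$ resp.\ $W_k^\perp$. Your explicit choice of the maximal spectral extension of $H$ also makes precise the closedness that the paper's argument tacitly relies on, since with $H$ taken only on $\mathcal{D}(X)$ the graph norm would not be complete.
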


\begin{proof}
Define
\[
\langle f, g\rangle_{W^{1,2}_H(X,\mu_\Lambda)} = 
\langle f, g\rangle_{L^2(X,\mu_\Lambda)} + \langle Hf,Hg\rangle_{L^2(X,\mu_\Lambda)}\,,
\]
where
\[
\langle u, v\rangle_{L^2(X,\mu_\Lambda)}
=\int_X
u(x)\overline{v(x)}\,d\mu_\Lambda(x)
\]
for $u,v \in W^{1,2}_H(X,\mu_\Lambda)$. This is readily seen as an inner product on $W^{1,2}_H(X,\mu_\Lambda)$.
The proof of completeness follows a standard approach, and will be omitted here.
\end{proof}

\begin{prop}[Poincar\'e Inequality]\label{PoincareIneq}
Assume that the operator is diagonalisable on $L^2(X,\mu_\Lambda)$, and all eigenvalues are non-negative and unbounded. Assume further that there exists an orthonormal basis of $L^2(X,\mu_\Lambda)$ consisting of eigenfunctions of $H$ which belong to $W^{1,2}_H(X,\mu_\Lambda)$.
Let $u\in W^{1,2}_H(X,\mu_\Lambda)$. Then 
\[
\norm{u}_{L^2(X,\mu_\Lambda)}\le C\norm{u}_{W^{1,2}_H(X,\mu_\Lambda)}
\]
for some $C>0$.
\end{prop}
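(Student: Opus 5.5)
The inequality as stated follows directly from the definition of the Sobolev norm, so I would begin there. By definition,
\[
\norm{u}_{W^{1,2}_H(X,\mu_\Lambda)}^2=\norm{u}_{L^2(X,\mu_\Lambda)}^2+\norm{Hu}_{L^2(X,\mu_\Lambda)}^2\ge\norm{u}_{L^2(X,\mu_\Lambda)}^2
\]
for every $u\in W^{1,2}_H(X,\mu_\Lambda)$. Taking square roots gives the assertion with $C=1$. This step uses none of the spectral hypotheses.

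Since the hypotheses on $H$ are clearly meant to be used, I would also prove the sharper Poincar\'e-type bound they support. This bound is
\[
\norm{u-P_0u}_{L^2(X,\mu_\Lambda)}\le\lambda_1^{-1}\norm{Hu}_{L^2(X,\mu_\Lambda)},
\]
where $P_0$ is the orthogonal projection onto $\ker H$ and $\lambda_1>0$ is the smallest non-zero eigenvalue. It implies the stated inequality on $(\ker H)^\perp$, with right-hand side replaced by $\norm{Hu}$ alone.

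The plan for the sharper bound is as follows.
\begin{enumerate}
\item Fix the orthonormal eigenbasis $(e_i)$ of $L^2(X,\mu_\Lambda)$ given by the hypothesis, with $He_i=\lambda_ie_i$, $\lambda_i\ge0$ and $e_i\in W^{1,2}_H(X,\mu_\Lambda)$.
\item Expand $u=\sum_i\langle u,e_i\rangle e_i$.
\item Show that $\langle Hu,e_i\rangle=\lambda_i\langle u,e_i\rangle$. This is done by testing against $e_i$ and using symmetry of $H$ on its domain.
\item Apply Parseval to get $\norm{Hu}^2=\sum_i\lambda_i^2\absolute{\langle u,e_i\rangle}^2$.
\item Argue that the eigenvalues have no accumulation point at $0$. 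This holds because they are unbounded and each has finite multiplicity, as in Theorem \ref{kNNh_Spectrum}. Hence $\lambda_1=\inf\{\lambda_i\mid\lambda_i>0\}$ is positive.
\item Restrict to the indices with $\lambda_i>0$ and bound $\sum\absolute{\langle u,e_i\rangle}^2\le\lambda_1^{-2}\sum\lambda_i^2\absolute{\langle u,e_i\rangle}^2$.
\end{enumerate}

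The main obstacle is step 5, together with step 3. Unboundedness of the eigenvalues alone does not force them away from $0$. For the concrete operators $\Delta^\alpha$ and $\Delta_k^\alpha$ with $\alpha>0$, I would obtain this from the discreteness of the spectrum, via Theorem \ref{kNNh_Spectrum} and the analogous statement for $\Delta^\alpha$. Step 3 requires self-adjointness, or at least symmetry, of $H$, and this is only implicit in the hypothesis of diagonalisability with an orthonormal eigenbasis. I would make this explicit, noting that an operator with an orthonormal eigenbasis and real eigenvalues is symmetric on the span of that basis.
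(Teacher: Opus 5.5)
Your argument is correct and proves the proposition as stated. Since $\norm{u}_{W^{1,2}_H(X,\mu_\Lambda)}^2=\norm{u}_{L^2(X,\mu_\Lambda)}^2+\norm{Hu}_{L^2(X,\mu_\Lambda)}^2$, the inequality holds with $C=1$, and none of the spectral hypotheses are needed.

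The paper takes a different route. It expands $u=\sum_\psi\alpha_\psi\psi$ in the eigenbasis and asserts $\sum_\psi\absolute{\alpha_\psi}^2\le C\sum_\psi\absolute{\alpha_\psi}^2\lambda_\psi^2=C\norm{Hu}^2_{L^2(X,\mu_\Lambda)}$ ``due to the unboundedness of the eigenvalues''. So it is really after the stronger estimate $\norm{u}_{L^2}\le C\norm{Hu}_{L^2}$. That is what the later coercivity arguments rely on (Proposition~\ref{PIkNNh}, Theorem~\ref{DirichletSolution_kNNh}, the energy estimates), and your $C=1$ version does not provide it. The paper's route would buy a genuinely useful inequality, but as written it fails for exactly the reason you flag. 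The step needs $\inf_\psi\lambda_\psi>0$, which unboundedness does not supply. Moreover, the hypotheses permit the eigenvalue $0$ (indeed $\Delta^\alpha 1_X=0$), and for that eigenfunction $\norm{u}\le C\norm{Hu}$ is false.

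Your sharper bound $\norm{u-P_0u}\le\lambda_1^{-1}\norm{Hu}$, with the kernel projected off and a spectral gap, is the right repair. One correction to your step 5: unbounded eigenvalues of finite multiplicity can still accumulate at $0$ (take eigenvalues $m$ and $1/m$ for $m\ge2$). So $\lambda_1>0$ must come from discreteness of the spectrum, as you go on to say for $\Delta^\alpha$ and $\Delta^\alpha_k$ with $\alpha>0$, or else be assumed outright. For the Dirichlet applications one further needs $\ker H$ to vanish on the relevant space, e.g.\ through the boundary condition.
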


\begin{proof}
The expansion 
\[
u=\sum\limits_{\psi}\alpha_\psi\psi
\]
for $u\in W^{1,2}_H(X,\mu_\Lambda)$, where $\psi\in W^{1,2}_H(X,\mu_\Lambda)$ are taken from an orthonormal basis of $L^2(X,\mu_\Lambda)$ consisting of eigenfunctions of $H$. Due to the unboundedness of the eigenvalues $\lambda_\psi$ corresponding to $\psi$, it follows that
\[
\norm{u}_{L^2(X,\mu_\Lambda)}
=\sum\limits_{\psi} \absolute{\alpha_\psi}^2 \le C \sum\limits_\psi\absolute{\alpha_\psi}^2\absolute{\lambda_\psi}^2
=C\norm{H u}^2_{L^2(X,\mu_\Lambda)}\,,
\]
as asserted.
\end{proof}

\begin{lem}
The semigroup $e^{-tH}$ acts compactly on $L^2(X,\mu_\Lambda)$ for $t>0$, if
$\alpha > 0$.
\end{lem}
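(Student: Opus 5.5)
The plan is to read $H$ as either $\Delta^\alpha$ or $\Delta^\alpha_k$; the statement only makes sense in those cases, since $\alpha$ enters the hypothesis. The argument will go through the spectral decompositions already established. By the theorem on $\Delta^\alpha$ and by Theorem \ref{kNNh_Spectrum}, $H$ is diagonalisable on $L^2(X,\mu_\Lambda)$ with an orthonormal eigenbasis: wavelets $\psi$ with small support, together with a finite-dimensional complement $W_0$ (respectively $W_k^\perp$). Its eigenvalues $\lambda\ge0$ have finite multiplicity, and for $\alpha>0$ the spectrum is discrete. In the $\Delta^\alpha_k$ case the eigenbasis need not be orthonormal on the finite-dimensional part, but it is still a Riesz basis there, which is harmless. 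The semigroup is therefore defined spectrally by
\[
e^{-tH}u=\sum_{\psi}e^{-t\lambda_\psi}\langle u,\psi\rangle\psi+e^{-tH|_{W_0}}P_{W_0}u\,,
\]
where $P_{W_0}$ is the projection onto the finite-dimensional invariant summand.

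The approach is to approximate $e^{-tH}$ in operator norm by finite-rank operators. Fix $N$. Let $P_N$ be the spectral projection onto the span of $W_0$ and all wavelets with $\lambda_\psi\le N$. This span is finite-dimensional, because each eigenvalue has finite multiplicity and, for $\alpha>0$, only finitely many distinct eigenvalues lie below $N$ (discreteness). Then $e^{-tH}P_N$ has finite rank. The remainder satisfies
\[
\norm{e^{-tH}(1-P_N)}\le\sup_{\lambda_\psi>N}e^{-t\lambda_\psi}\le e^{-tN}\to0\,,
\]
since on the wavelet part the eigenbasis is orthonormal. Hence $e^{-tH}$ is a norm limit of finite-rank operators and is therefore compact.

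The main obstacle is justifying that the wavelet eigenvalues accumulate only at $+\infty$ when $\alpha>0$, i.e.\ that $\#\{\psi:\lambda_\psi\le N\}<\infty$. I would verify this directly from Lemma \ref{waveletSpec} (resp.\ Lemma \ref{waveletSpec_kNNh}). For a ball $B(a)$ of radius $q^{-m}$, the second term of $\lambda_\psi$ is a positive constant times $q^{m(\alpha-n)n}$-type growth. More robustly, the first integral over $X\setminus B(a)$ of $d_\Lambda(a,y)^{-n\alpha}$ contains the contribution of the sibling balls inside the parent ball, which is bounded below by $c\,q^{mn\alpha}\cdot\mu_\Lambda(\text{parent}\setminus B(a))\asymp c' q^{mn(\alpha-1)}$. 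I would bound it below using the fact that $d_\Lambda(a,y)$ is at most comparable to the measure of the smallest common ball. If this crude bound is insufficient for $\alpha\le1$, I would fall back on the exact Kozyrev-type computation: summing over the nested shells shows $\lambda_\psi\ge c\,\mu_\Lambda(B(a))^{-\alpha}$, which tends to $\infty$ as $m\to\infty$. Since only finitely many balls of each radius exist (compactness of $X$, via Serre's theorem) and each carries $q^n-1$ wavelets, only finitely many $\psi$ have $\lambda_\psi\le N$. This gives finiteness of $P_N$ and completes the argument.
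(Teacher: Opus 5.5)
Your main argument is correct given the paper's earlier spectral results, and it takes a different route from the paper. The paper simply asserts that $e^{-tH}$ is trace class for $t>0$ and concludes compactness. You instead exhibit $e^{-tH}$ as the operator-norm limit of the finite-rank operators $e^{-tH}P_N$. This needs only that the eigenvalues have finite multiplicity and accumulate only at $+\infty$, which is exactly the discreteness asserted in the spectral theorem for $\Delta^\alpha$ and in Theorem \ref{kNNh_Spectrum}. Your handling of $W_k^\perp$ is fine, since it is invariant and orthogonal to the wavelet span. Trace class is a strictly stronger requirement: it needs $\sum_\psi e^{-t\lambda_\psi}<\infty$ for every $t>0$. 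Since there are $\asymp q^{mn}$ wavelets on balls of radius $q^{-m}$, this forces the level-$m$ eigenvalues $\lambda_m$ to satisfy $\lambda_m/m\to\infty$, which the paper never proves. If they grew only linearly in $m$, the semigroup would still be compact by your argument but not trace class for small $t$.

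The final paragraph, however, should be dropped, because it does not verify $\lambda_\psi\to\infty$. First, the second term of Lemma \ref{waveletSpec}, $\mu_\Lambda(B(a))^{n-\alpha}(1-q^{-n}(1+(-1)^n))\asymp q^{mn(\alpha-n)}$, tends to $0$ for $\alpha<n$. Second, your sibling bound $q^{mn(\alpha-1)}$ grows only for $\alpha>1$. Third, the fallback $\lambda_\psi\ge c\,\mu_\Lambda(B(a))^{-\alpha}$ is the scaling of the Taibleson kernel $\norm{x-y}_{K^n}^{-n-\alpha}$, not of $d_\Lambda^{-n\alpha}$. In your normalisation the shell at radius $q^{-j}$ contributes $\asymp q^{jn(\alpha-1)}$, so summing over shells gives a geometric series. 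For $\alpha>1$ it is comparable to its largest term, which is your sibling bound again; for $\alpha=1$ it grows like $m$; for $\alpha<1$ it stays bounded.

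This growth is the whole content of the lemma. An infinite orthonormal family of eigenfunctions with eigenvalues $\le M$ would satisfy $\norm{e^{-tH}\psi}\ge e^{-tM}$, which rules out compactness. For small $\alpha$, the growth rests only on the discreteness asserted earlier in the paper, and your own heuristics suggest it actually fails there. So present it explicitly as a cited input rather than as something your computation establishes.
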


\begin{proof}
If $\alpha > 0$, 
then the operator $e^{-tH}$ acting on 
$L^2(X,\mu_\Lambda)$ is of trace-class
for $t > 0$. Hence, it operates compactly on that Hilbert space.
\end{proof}

The spectrum of a linear operator $A$ is often denoted as $\sigma(A)$. Its complement $\rho(A)$ in $\mathbb{C}$ is the resolvent set of $A$. The resolvent of an operator $A$ is
defined as the operator
\[
R(\lambda, A) = (\lambda - A)^{-1}\,,
\]
where $\lambda\in\rho(A)$.

\begin{thm}[Hille-Yosida, contraction case]\label{HilleYosidaContraction}
 Let $A$ be a linear operator on
a Banach space $E$. The following statements are equivalent:
\begin{enumerate}
\item $A$ generates a strongly continuous contraction semigroup.
\item $A$ is closed, densely defined, and for every $\lambda > 0$, one has $\lambda\in\rho(A)$ and
\[
\norm{\lambda R(\lambda, A)}\le 1\,.
\]
\item
$A$ is closed, densely defined, and for every $\lambda\in\mathbb{C}$ with $\Re(\lambda)>0$, one has
$\lambda\in\rho(A)$ and
\[
\norm{R(\lambda, A)}\le\frac{1}{\Re(\lambda)}
\]
\end{enumerate}
\end{thm}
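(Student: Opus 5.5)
The plan is to prove the equivalences in the order (1) $\Rightarrow$ (3) $\Rightarrow$ (2) $\Rightarrow$ (1). Here ``$A$ generates'' is taken in the sense that the semigroup $T(t)=e^{tA}$ satisfies $\norm{T(t)}\le 1$. (In the paper's applications one takes $A=-H$ with $H$ a non-negative Laplacian such as $\Delta^\alpha$ or $\Delta^\alpha_k$.)

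\emph{Step (1) $\Rightarrow$ (3).} Let $T(t)$ be a strongly continuous contraction semigroup with generator $A$. That $A$ is closed and densely defined is standard: for density, use the averages $t^{-1}\int_0^t T(s)x\,ds\in D(A)$, which converge to $x$ as $t\to0$. For $\Re(\lambda)>0$, define
\[
R_\lambda x=\int_0^\infty e^{-\lambda t}T(t)x\,dt .
\]
This is a Bochner integral, which converges because $\norm{e^{-\lambda t}T(t)}\le e^{-\Re(\lambda)t}$. The same bound gives $\norm{R_\lambda}\le 1/\Re(\lambda)$. A direct computation, differentiating $T(h)R_\lambda x$ at $h=0$, shows that $R_\lambda$ maps $E$ into $D(A)$ and $(\lambda-A)R_\lambda=\mathrm{id}$. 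Since $A$ commutes with $T(t)$ on $D(A)$, also $R_\lambda(\lambda-A)=\mathrm{id}$ on $D(A)$. Hence $\lambda\in\rho(A)$ and $R(\lambda,A)=R_\lambda$.

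\emph{Step (3) $\Rightarrow$ (2).} This is trivial: restrict (3) to real $\lambda>0$.

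\emph{Step (2) $\Rightarrow$ (1).} This is the main obstacle, and I would use the Yosida approximation. Set
\[
A_\lambda=\lambda A R(\lambda,A)=\lambda^2R(\lambda,A)-\lambda .
\]
Each $A_\lambda$ is bounded, so $e^{tA_\lambda}$ is defined by the exponential series. The key bound is
\[
\norm{e^{tA_\lambda}}\le e^{-\lambda t}e^{t\lambda\norm{\lambda R(\lambda,A)}}\le 1 .
\]
Next, for $x\in D(A)$ one has $\lambda R(\lambda,A)x-x=R(\lambda,A)Ax$, whose norm is at most $\norm{Ax}/\lambda\to0$. Since $D(A)$ is dense and $\norm{\lambda R(\lambda,A)}\le 1$, it follows that $\lambda R(\lambda,A)\to\mathrm{id}$ strongly, and therefore $A_\lambda x\to Ax$ for $x\in D(A)$.

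The approximating semigroups commute with one another, so
\[
e^{tA_\lambda}x-e^{tA_\mu}x=\int_0^t \frac{d}{ds}\left(e^{(t-s)A_\mu}e^{sA_\lambda}x\right)ds .
\]
Together with the contraction bound this gives $\norm{e^{tA_\lambda}x-e^{tA_\mu}x}\le t\norm{A_\lambda x-A_\mu x}$. Hence $e^{tA_\lambda}x$ is Cauchy as $\lambda\to\infty$, uniformly for $t$ in compact intervals. By density and the uniform bound, the limit $T(t)x$ exists for every $x\in E$. It inherits the semigroup law, strong continuity and $\norm{T(t)}\le1$.

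Finally, identify the generator $B$ of $T(t)$. Passing to the limit in $e^{tA_\lambda}x-x=\int_0^te^{sA_\lambda}A_\lambda x\,ds$ shows that $B$ extends $A$. Now fix $\lambda>0$. Then $\lambda-A$ is bijective from $D(A)$ onto $E$ by (2), while $\lambda-B$ is injective by Step (1) applied to $B$. Since $\lambda-B$ agrees with $\lambda-A$ on $D(A)$, it is a bijection from $D(A)$ onto $E$; an injective extension of such a map cannot have a larger domain, so $B=A$.
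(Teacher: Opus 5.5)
Your proof is correct, and it follows the same route as the paper's source: the paper gives no argument of its own and only cites \cite[Theorem II.3.5]{EK2000}, whose proof is the standard one you reproduce. That is, (1)~$\Rightarrow$~(3) via the Laplace-transform representation of the resolvent, (3)~$\Rightarrow$~(2) by restricting to real $\lambda$, and (2)~$\Rightarrow$~(1) via the Yosida approximants $A_\lambda=\lambda^2R(\lambda,A)-\lambda$ followed by the argument that an injective extension of the bijection $\lambda-A$ must equal it.
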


\begin{proof}
Cf.\ \cite[Theorem II.3.5]{EK2000}.
\end{proof}

These results are applied here to the Vladimirov operator $\Delta^\alpha$.

\begin{thm}\label{VTMarkov}
Let $\alpha > 0$. 
The operator $\Delta^\alpha$ is the generator of a strongly continuous semigroup $e^{-t\Delta^\alpha}$ on $W_{\Delta^\alpha}^{1,2}(X,\mu_\Lambda)$ for $t \ge0$, which satisfies the Markov
property.
\end{thm}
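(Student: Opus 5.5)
The plan is to apply the Hille--Yosida theorem (Theorem \ref{HilleYosidaContraction}) to $A=-\Delta^\alpha$, using the spectral description of $\Delta^\alpha$ from the theorem on the decomposition $L^2(X,\mu_\Lambda)=W_0\oplus L^2(X,\mu_\Lambda)_w$. The Markov property will then be checked separately, through the integral (jump-kernel) form of $\Delta^\alpha$.

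\emph{Step 1: generator.} By that theorem, $\Delta^\alpha$ is self-adjoint and positive semi-definite. It has an orthonormal eigenbasis made of the wavelets $\psi_{B(a),j}$, with eigenvalues $\lambda_\psi>0$ from Lemma \ref{waveletSpec}, together with a finite basis of $W_0$ on which the eigenvalues are $\ge0$. On $L^2(X,\mu_\Lambda)$ the domain of $\Delta^\alpha$ is exactly $W^{1,2}_{\Delta^\alpha}(X,\mu_\Lambda)$. This domain contains $\mathcal{D}(X)$, so it is dense, and $-\Delta^\alpha$ is closed because it is self-adjoint. For $\lambda>0$ the resolvent $R(\lambda,-\Delta^\alpha)$ acts diagonally with multipliers $(\lambda+\lambda_\psi)^{-1}\le\lambda^{-1}$, so $\norm{\lambda R(\lambda,-\Delta^\alpha)}\le1$. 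Statement (2) of Theorem \ref{HilleYosidaContraction} therefore gives a strongly continuous contraction semigroup, explicitly
\[
e^{-t\Delta^\alpha}u=\sum_\psi e^{-t\lambda_\psi}\langle u,\psi\rangle\psi .
\]
This semigroup commutes with $\Delta^\alpha$, so it leaves $W^{1,2}_{\Delta^\alpha}(X,\mu_\Lambda)$ invariant. It is also contractive and strongly continuous in the Sobolev norm, since each term in the expansion of $\norm{e^{-t\Delta^\alpha}u}^2_{W^{1,2}}$ is multiplied by $e^{-2t\lambda_\psi}\le1$, and dominated convergence gives continuity as $t\to0$. This settles the semigroup claim on $W^{1,2}_{\Delta^\alpha}(X,\mu_\Lambda)$.

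\emph{Step 2: Markov property.} Two things are needed: positivity preservation, and conservativeness $e^{-t\Delta^\alpha}1=1$, which together give $0\le u\le1\Rightarrow 0\le e^{-t\Delta^\alpha}u\le1$. Conservativeness holds because $\Delta^\alpha 1=0$ directly from the integral formula, so $1\in W_0$ has eigenvalue $0$. For positivity, I will use that $\Delta^\alpha$ is given by a nonnegative symmetric kernel $d_\Lambda(x,y)^{-n\alpha}$. The first approach is the Beurling--Deny criterion for the associated quadratic form
\[
\mathcal{E}(u,u)=\tfrac12\iint d_\Lambda(x,y)^{-n\alpha}\absolute{u(x)-u(y)}^2\,d\mu_\Lambda(x)\,d\mu_\Lambda(y).
\]
Normal contractions (for example $u\mapsto(0\vee u)\wedge1$) do not increase $\absolute{u(x)-u(y)}$, so $\mathcal{E}$ is a Dirichlet form and the semigroup is sub-Markovian. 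Combining this with conservativeness gives the Markov property.

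A second approach avoids form theory. Truncate the kernel to $w_\epsilon=\min(d_\Lambda^{-n\alpha},\epsilon^{-1})$. The truncated operator has the form $\Delta_\epsilon=c_\epsilon-K_\epsilon$, with $c_\epsilon(x)=\int_X w_\epsilon(x,y)\,d\mu_\Lambda(y)$ and $K_\epsilon$ a positive bounded operator, so $e^{-t\Delta_\epsilon}$ is positive. One then passes to the limit $\epsilon\to0$ in the strong resolvent sense, using the wavelet eigenvalues of Lemma \ref{waveletSpec}.

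\emph{Main obstacle.} I expect the main difficulty to be identifying the form $\mathcal{E}$ with the operator $\Delta^\alpha$, that is, showing $\langle\Delta^\alpha u,u\rangle=\mathcal{E}(u,u)$ on $\mathcal{D}(X)$ and that $\mathcal{D}(X)$ is a form core. This requires $\int_X d_\Lambda(x,y)^{-n\alpha}\,d\mu_\Lambda(y)$ to behave well near the diagonal. I would handle this by computing $\mathcal{E}(\psi,\psi)=\lambda_\psi$ on wavelets and indicators via Lemma \ref{waveletSpec}, and then extending by orthogonality, since both sides are diagonal in the same basis.
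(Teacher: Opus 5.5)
Your proposal is correct, but it takes a different route from the paper in both halves.

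\emph{Semigroup.} The paper does not read the resolvent bound off the spectral multipliers $(\lambda+\lambda_\psi)^{-1}$. Instead it proves $\norm{\int_0^t e^{-\tau\Delta^\alpha}u\,d\tau}_{W^{1,2}_{\Delta^\alpha}}\le t\norm{u}_{W^{1,2}_{\Delta^\alpha}}$ on eigenfunctions and extends it by Pythagoras. It then converts this into $\norm{\lambda R(\lambda,-\Delta^\alpha)}\le 1$ via a Laplace-transform formula for the resolvent, and applies Hille--Yosida directly on $W^{1,2}_{\Delta^\alpha}(X,\mu_\Lambda)$. Both arguments rest on the same eigenbasis. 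Yours is shorter. Obtaining the Sobolev statement by restricting the $L^2$ semigroup, which commutes with $\Delta^\alpha$, also tells you that the generator there is the part of $-\Delta^\alpha$ in $W^{1,2}_{\Delta^\alpha}(X,\mu_\Lambda)$.

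\emph{Markov property: what the paper does.} The paper derives positivity and $f\le 1\Rightarrow e^{-t\Delta^\alpha}f\le 1$ from two spectral facts: real-valuedness and torus-invariance of eigenfunction expansions, and non-negativity of the eigenvalues. It uses $e^{-t\Delta^\alpha}1_X=1_X$, and builds an invariant distribution $\pi_{W^{1,2}}$ eigenspace by eigenspace.

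\emph{Markov property: what your route buys.} Your Beurling--Deny (or truncation) argument rests on the non-negativity and symmetry of the kernel $d_\Lambda(x,y)^{-n\alpha}$. That is the input order preservation actually requires; a non-negative spectrum and a real eigenbasis by themselves only give $L^2$-contractivity. The obstacle you flag can be closed in three steps.
\begin{enumerate}
\item On $\mathcal{D}(X)$, the identity $\langle\Delta^\alpha u,v\rangle=\mathcal{E}(u,v)$ is a routine Fubini symmetrisation, because $u(x)-u(y)$ vanishes near the diagonal. This gives the off-diagonal vanishing $\mathcal{E}(\psi,\phi)=0$ that your ``diagonal in the same basis'' step needs.
\item $\mathcal{D}(X)$ contains all finite combinations of wavelets and of the $W_0$ basis, so it is a form core for $(\Delta^\alpha)^{1/2}$.
\item $\mathcal{D}(X)$ is stable under normal contractions, so the closure of $(\mathcal{E},\mathcal{D}(X))$ is a Dirichlet form, and it coincides with the form of $\Delta^\alpha$.
\end{enumerate}
Your truncation variant avoids even this. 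Since $w_\epsilon$ is still a function of $d_\Lambda$, the proof of Lemma \ref{waveletSpec} keeps the same wavelets as eigenfunctions, with eigenvalues converging to $\lambda_\psi$. On the finite-dimensional $W_0$ the matrices simply converge. This gives strong resolvent convergence directly, and positivity passes to the limit.

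\emph{Invariant measure.} The paper counts exhibiting an invariant measure as part of the Markov property, and you omit it. In your setting it costs one line:
$\int_X e^{-t\Delta^\alpha}f\,d\mu_\Lambda=\langle e^{-t\Delta^\alpha}f,1_X\rangle=\langle f,e^{-t\Delta^\alpha}1_X\rangle=\int_X f\,d\mu_\Lambda$.
So $\mu_\Lambda$ itself is invariant. This is a genuine finite measure, rather than the distribution on $W^{1,2}_{\Delta^\alpha}(X,\mu_\Lambda)$ that the paper constructs; you should add this line.
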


The proof follows the argumentation in the proof of \cite[Theorem 6.3]{ellipticBVP}.

\begin{proof}
Notice first of all, that $-t\Delta^\alpha$ is of trace class according to Lemma \ref{waveletSpec}. Hence, it acts compactly on $L^2(X,\mu_\Lambda)$ for $t\ge0$.
Since  its
eigenvalues are bounded from above, as they are non-positive according to
Lemma \ref{waveletSpec},
it follows that $e^{-t\Delta}$ is a strongly continuous semigroup acting on 
$L^2(X,\mu_\Lambda)$.

\smallskip
The space $W^{1,2}_{\Delta^\alpha}(X,\mu_\Lambda)$ is clearly invariant under the action of $e^{-t\Delta^\alpha}$ for
$t \ge 0$. The contraction semigroup property of $e^{-t\Delta^\alpha}$ acting on $W_{\Delta^\alpha}^{1,2}(X,\mu_\Lambda)$
follows, because
\begin{align}\label{importantIneq}
\norm{\int_0^t e^{-\tau\Delta^\alpha u}\,d\tau}_{W_{\Delta^\alpha}^{1,2}(X,\mu_\Lambda)}\le t\norm{u}_{W_{\Delta^\alpha}^{1,2}(X,\mu_\Lambda)}\,,
\end{align}
which can be checked first for $L^2$-normalised eigenfunctions $\psi$ with eigenvalue $\lambda_\psi$:
\begin{align*}
\norm{\int_0^t e^{-\tau\Delta^\alpha}\psi\,d\tau}^2_{W^{1,2}_{\Delta^\alpha}(X,\mu_\Lambda)}&
=\norm{\int_0^te^{-\tau\lambda_\psi}\psi\,d\tau}^2_{L^2(X,\mu_\Lambda)}
\\
&+\norm{\Delta^\alpha\int_0^t e^{-\tau\lambda_\psi}\psi\,d\tau}^2_{L^2(X,\mu_\Lambda)}
\\
&=\left(\int_0^t e^{-\tau\lambda_\psi}\psi\,d\tau\right)^2
(1+\lambda_\psi^2)
\\
&=\frac{(1-e^{-t\lambda_\psi})^2}{\lambda_\psi^2}(1+\lambda_\psi)^2
\\
&\le t^2(1+\lambda_\psi)^2
=t^2\norm{\psi}^2_{W_{\Delta^\alpha}^{1,2}(X,\mu_\Lambda)}\,,
\end{align*}
because
\[
\frac{1-e^{-t\lambda_\psi}}{\lambda_\psi}\le t
\]
for $t\ge 0$. Then, for general 
$u \in W_{\Delta^\alpha}^{1,2}(X,\mu_\Lambda)$, 
this follows by expanding $u$ into
a sum of eigenfunctions, and then applying Pythagoras.

\smallskip
The reason why (\ref{importantIneq}) implies the contraction semigroup property can be
shown as explained by User Mizar on StackExchange Mathematics on Jan.\
19th, 2015 at 23:54 like this: 
using that
\[
R(\lambda,\Delta^\alpha)u =\lambda
\int_0^\infty e^{-\lambda t}
\int_0^te^{-\tau\Delta^\alpha}\,d\tau\,dt
\]
is an expression for the resolvent
\[
R(\lambda,\Delta^\alpha) = (\lambda-\Delta^\alpha)^{-1}\,,
\]
it follows that
\begin{align*}
\norm{R(\lambda,\Delta^\alpha)u}_{W^{1,2}_{\Delta^\alpha}(X,\mu_\Lambda)}
&\le\int_0^\infty e^{-\lambda t} 
\norm{\int_0^t e^{-\tau\Delta^\alpha}u\,d\tau}_{W^{1,2}_{\Delta^\alpha}(X,\mu_\Lambda)}\,dt
\\
&\le \lambda \int_0^\infty e^{-\lambda t}t\norm{u}_{W^{1,2}_{\Delta^\alpha}(X,\mu_\Lambda)}\,dt
\\
&=\frac{1}{\lambda}\norm{u}_{W^{1,2}_{\Delta^\alpha}(X,\mu_\Lambda)}\,,
\end{align*}
implying that
\begin{align}\label{ResolventIneq}
\norm{R(\lambda,\Delta^\alpha)}\le\frac{1}{\lambda}\,,
\end{align}
and so, the Hille-Yosida Theorem for contraction semigroups (Theorem \ref{HilleYosidaContraction})
shows that $e^{-t\Delta}$ with $t\ge0$ is a contraction semigroup on $W^{1,2}_{\Delta^\alpha}(X,\mu_\Lambda)$: Namely, the spectrum of $-t\Delta^\alpha$ being negative implies the condition $\lambda>0$ $\Rightarrow $ $\lambda\in\rho(-t\Delta^\alpha)$. From the unitary diagonalisability and (\ref{ResolventIneq}), the other conditions of Hille-Yosida follow.

\smallskip
What remains is the Markov property. It
follows first from showing that
\begin{align}\label{positive}
f \ge 0\; \text{a.e.}&\Rightarrow e^{-t\Delta^\alpha} f \ge 0\;\text{a.e.}
\\\label{lessThanOne}
f \le 1\; \text{a.e.}&\Rightarrow e^{-t\Delta^\alpha} f \le 1\; \text{a.e.}
\\\label{indicator}
e^{-t\Delta^\alpha}1_X &= 1_X\,,
\end{align}
and then exhibiting an invariant measure for $e^{-t\Delta^\alpha}$ for $t\ge0$.

\smallskip
Statements (\ref{positive}) and (\ref{lessThanOne}) follow like this: If $f$ is real-valued, then $f$ is a
linear combination
of eigenfunctions which is invariant under the action of
the torus $\left(\mathbb{F}_q^\times\right)^n$ on each chart $(U,\phi)$ via the map
\[
x = (\xi_1,\dots,\xi_n) \mapsto \tau(j)x = (\tau_1 (j_1)\xi_1,\dots,\tau_n(j_n)\xi_n),
\]
where $j = (j_1,\dots,j_n)\in\left(\mathbb{F}_q^\times\right)^n$
and a map
\[
\tau\colon\mathbb{F}_q^n\to O_K
\]
which takes each element of $\mathbb{F}_q^n$ to a unique representative modulo $\pi O_K$
which takes each element of $\mathbb{F}_q^n$ to a unique representative modulo $\pi O_K$.
Now, if $f\ge 0$, then it is a positive and torus-invariant linear combination
of eigenfunctions of $\Delta^\alpha$. This is then also the case for $e^{-t\Delta^\alpha}$. If $f\ge 1$, then
again, $e^{-t\Delta^\alpha}$ is real-valued, and also $\ge 1$, because all eigenvalues are non-
negative by Lemma \ref{waveletSpec}. Property (\ref{indicator}) follows from the fact that $1_X$ is an
eigenfunction of $\Delta^\alpha$ with eigenvalue $0$.

\smallskip
An invariant measure is found thus: on each eigenspace $E_\psi$, the observation
\[
e^{-t\Delta^\alpha_\psi} := 
e^{-t\Delta^\alpha}|_{E_\psi} 
= e^{-t\lambda_\psi}\,,
\]
leads to an invariant measure $\pi_\psi$ such that
\[
e^{-t\Delta_\psi^\alpha}\pi_\psi f_\psi(x)
=\int_X f_\psi(y)\,d\pi_\psi(y)\,,
\]
whose existence follows from the symmetry of the operator on $E_\psi$. Write
$f\in\mathcal{D}(X)$ as a finite sum
\[
f=\sum\limits_\psi f_\psi\,,
\]
where $f_\psi$ is the orthogonal projection of $f$ onto $E_\psi$. Then the formal direct
sum
\[
\pi_{W^{1,2}}=\sum\limits_\psi\pi_\psi
\]
satisfies
\[
e^{-t\Delta^\alpha}\pi_{W^{1,2}}f
=\sum\limits_{E_\psi}
e^{-t\Delta^\alpha_\psi}\pi_\psi f_\psi
=\sum\limits_{E_\psi}\int_X f_\psi\,d\pi_\psi
=\int_X f\,d\pi_{W^{1,2}}
\]
as a distribution on $\mathcal{D}(X)$. In order to exhibit $\pi_{W^{1,2}}$ also as a distribution on
$W^{1,2}(X,\mu_\Lambda)$, approximate $f\in W^{1,2}(X,\mu_\Lambda)$ with a convergent sequence of
test functions $f^{(m)} \in\mathcal{D}(X)$, and observe that
\[
\sum\limits_{E_\psi}e^{-t\Delta_\psi^\alpha}\pi_\psi f^{(m)}
=\sum\limits_{E_\psi}\int_X f_\psi^{(m)}\,d\pi_\psi
=e^{-t\Delta^\alpha}\pi_{W^{1,2}}f^{(m)}
\]
converges for $m\to\infty$ to
\begin{align}\label{invariantDist}
\int_X f\,d\pi_{W^{1,2}}
=\sum\limits_{E_\psi}\int_X f_\psi\,d\pi_\psi
= \sum\limits_{E_\psi} e^{-t\Delta^\alpha_\psi}\pi_\psi f_\psi
= \left(\sum\limits_{E_\psi}e^{-t\Delta_\psi^\alpha}\pi_\psi\right)f\,,
\end{align}
and this converges, because $\pi_\psi\in E_\psi$ is a multiplier measure of the form 
$\lambda_\psi$,
and thus $f \in W^{1,2}(X,\mu_\Lambda)$ satisfies
\begin{align*}
\absolute{\int_X f\,d\pi_{W^{1,2}}}^2
&=\absolute{\langle f,\pi_{W^{1,2}} \rangle}^2
=\sum\limits_{E_\psi} \absolute{\langle f_\psi,\pi_{W^{1,2}}}^2
=\sum\limits_{E_\psi} \lambda_\psi^2\norm{f_\psi}^2_{E_\psi}
\\
&\le \norm{f}^2_{W^{1,2}_{\Delta^\alpha}(X,\mu_\Lambda)}\,.
\end{align*}
This means that
\[
\sum\limits_{E_\psi}e^{-t\Delta^\alpha_\psi}\pi_\psi\in W^{1,2}_{\Delta^\alpha}(X,\mu_\Lambda)'
\]
is a distribution on $W^{1,2}(X,\mu_\Lambda)$ which coincides with the formally given
distribution
\[
e^{-t\Delta^\alpha}\pi_{W^{1,2}}
\]
together with the identity (\ref{invariantDist}). Hence, $\pi_{W^{1,2}}$ is the distribution on $W^{1,2}_{\Delta^\alpha}(X,\mu_\Lambda)$
invariant under $e^{-t\Delta^\alpha}$ for $t > 0$. This proves the assertions.
\end{proof}

\begin{rem}
The proof of Theorem \ref{VTMarkov} requires restricting to the Sobolev
space $W^{1,2}_{\Delta^\alpha}(X,\mu_\Lambda)$ in order to prove the existence of an invariant distribution
for $e^{-t\Delta^\alpha}$ for $t > 0$.
\end{rem}
\subsection{$\kNNh$ Dirichlet Problems}

The following is inspired by \cite{Kochubei2023}.
\newline

Let $\Omega\subset X$ be an open subset of a compact $p$-adic analytic manifold
$(X,\mathcal{A})$ with an $O_K$-compatible atlas such that $N(\mathcal{A})$ is connected.
\newline

The \emph{height-$k$-boundary} of an open set $\Omega\subset X$ is defined as
\[
\partial_k\Omega = \mathset{(x,y)\in E_k\mid  x \in\Omega,\; y \in X \setminus\Omega}\,,
\]
where $k\in\mathbb{N}$ is sufficiently large, $E_k$ is the edge set of 
the height-$k$-nearest neighbour graphon $\kNNh (X)$ associated with $X$, and
$m >> 0$.
The \emph{$k$-boundary} of $\Omega$ is defined as
\[
\delta_k\Omega
=\mathset{y\in X\setminus\Omega\mid\exists\;x\in\Omega\colon(x,y)\in\partial_k\Omega}\,.
\]
Now, define
\[
\closure_k\Omega = \Omega\sqcup\delta_k\Omega
\]
as the \emph{$k$-closure} of $\Omega$. Clearly, $\closure_k\Omega$ is a closed-open subset of $X$.
\newline

Denote the atlas on a clopen $Z\subset X$ induced by $\mathcal{A}$ as
\[
\mathcal{A}\cap Z,
\]
as it consists of restricting all charts $(U,\phi)\in\mathcal{A}$ to $U \cap Z$. This gives $Z$ the
structure of a compact $p$-adic analytic manifold. In this way, $(\closure_k\Omega, \mathcal{A}\cap\closure_k\Omega)$
becomes a compact $p$-adic analytic manifold.
\newline

Define
\[
\Star_k^\Omega(x) = \Star_k(x)\cap\closure_k\Omega\,,
\]
and the $k$-nearest neighbour operator on $\closure_k\Omega$ is
\[
\Delta_{k,\Omega}^\alpha f(x)
=\int_{\Star_{k,\Omega}^\alpha}T_{k,\Omega}^\alpha(x,y)(u(x)-u(y))\,d\mu_\Lambda(y)
\]
with kernel function
\[
T_{k,\Omega}^\alpha(x,y)=\begin{cases}
\frac{\displaystyle\deg_k(y)}{\displaystyle\deg_k(x)}\cdot d_\Lambda(x,y)^{-n\alpha},&y\in\Star_{k}^{\Omega}(x)
\\
0,&\text{otherwise}
\end{cases}
\]
for $x,y\in X$, acting on functions on $u\in\mathcal{D}(\closure_k\Omega)$.
\newline

Further, define the Sobolev spaces of interest as
\begin{align*}
W^{1,2,\alpha}(\closure_k\Omega,\mu_k) 
&= \mathset{f \in L^2(\closure_k\Omega,\mu_k)\mid \norm{\Delta^\alpha_{\Omega,k} f}_{L^2(\closure_k\Omega,\mu_\Lambda)} < \infty}\,,
\\
W_0^{1,2,\alpha}(\closure_k\Omega,\mu_\Lambda) 
&=\mathset{f \in W^{1,2,\alpha}\mid f|_{\delta_k\Omega}=0}
\end{align*}
for $\alpha\in\mathbb{R}$, on which the operator $\Delta^\alpha_{k,\Omega}$ is meant to act.

\begin{prop}[Poincar\'e Inequality, $\kNNh$]\label{PIkNNh}
Let $f\in W^{1,2,\alpha}(\closure_k\Omega,\mu_\Lambda)$.
Then there exists $C > 0$ such that
\[
\norm{f}_{L^2(\closure_k \Omega,\mu_\Lambda)} \le 
C\norm{\Delta^\alpha_{\Omega,k}f}_{L^2(\closure_k\Omega,\mu_\Lambda)}
\]
with $\alpha > 0$ fixed.
\end{prop}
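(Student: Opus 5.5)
The plan is to deduce the statement from Proposition \ref{PoincareIneq}, applied to the compact $p$-adic analytic manifold $(\closure_k\Omega,\mathcal{A}\cap\closure_k\Omega)$ with $H=\Delta^\alpha_{k,\Omega}$. Since $\closure_k\Omega$ is clopen in $X$, it inherits an $O_K$-compatible atlas, the integral structure $\Lambda$ and the canonical measure $\mu_\Lambda$. So the whole wavelet machinery of the previous section is available on it. I would then redo the proof of Theorem \ref{kNNh_Spectrum} for the truncated kernel $T^\alpha_{k,\Omega}$.

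\emph{Step 1: wavelets.} Wavelets supported on sufficiently small balls $B(a)\subset\closure_k\Omega$ are eigenfunctions, exactly as in Lemma \ref{waveletSpec_kNNh}. Indeed, for $k$ large, $\deg_k$ is constant on $B(a)$ by Lemma \ref{degreeConst}, and $\Star_k^\Omega(a)\supset B(a)$. The eigenvalue is
\[
\lambda_\psi=\int_{\Star^\Omega_k(a)\setminus B(a)}\mu_\Lambda(a\wedge y)^{-n\alpha}\,d\mu_\Lambda(y)+\mu_\Lambda(B(a))^{n-\alpha}\bigl(1-q^{-n}(1+(-1)^n)\bigr).
\]
For $\alpha>0$ these eigenvalues are positive. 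They tend to $\infty$ as $\mu_\Lambda(B(a))\to0$, with finite multiplicities.

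\emph{Step 2: the complement.} The orthogonal complement of the wavelet span is finite-dimensional, spanned by indicators of the maximal small balls. On it the operator is $D_kA_k$ with $D_k$ diagonal and $A_k$ symmetric. The detailed balance argument from Theorem \ref{kNNh_Spectrum} then gives diagonalisability with non-negative eigenvalues, and an eigenbasis of the whole space lying in $W^{1,2,\alpha}$ (after the symmetrising change of inner product, which I would need to check does not affect the orthonormal basis hypothesis up to equivalence of norms).

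\emph{Step 3: applying Proposition \ref{PoincareIneq}.} Its proof gives
\[
\norm{f}^2_{L^2}=\sum_\psi\absolute{\alpha_\psi}^2\le\Bigl(\min_\psi\lambda_\psi\Bigr)^{-2}\sum_\psi\lambda_\psi^2\absolute{\alpha_\psi}^2,
\]
provided every eigenvalue is bounded below by some $\lambda_0>0$. Here $\lambda_0$ exists: only finitely many eigenvalues lie below any bound, by Step 1 and the finite dimension of the complement. This yields the asserted inequality with $C=\lambda_0^{-1}$.

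\emph{Main obstacle: the kernel.} The constant function $1_{\closure_k\Omega}$ is annihilated by $\Delta^\alpha_{k,\Omega}$. More generally, there is a zero eigenvalue of the graph Laplacian on the finite part for each connected component of $\Star_k^\Omega$. Hence the inequality cannot hold literally on all of $W^{1,2,\alpha}$.

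My first attempt is to use the boundary condition, that is, to work on $W_0^{1,2,\alpha}(\closure_k\Omega,\mu_\Lambda)$, presumably the intended setting for the Dirichlet problem. I would show that on functions vanishing on $\delta_k\Omega$ the finite matrix $D_kA_k$ has trivial kernel. The argument: by the maximum principle for the graph Laplacian, a harmonic vector vanishing on the boundary vertices vanishes on every component that meets $\delta_k\Omega$. Each component of $\Omega$ in the height-$k$ graphon reaches $\delta_k\Omega$ when $\Omega\ne X$. Failing that, I would state the inequality on the orthogonal complement of $\ker\Delta^\alpha_{k,\Omega}$, where Step 3 applies verbatim.
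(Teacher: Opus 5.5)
\textbf{Overall.} You follow the paper's route. Its proof of this proposition expands in an eigenbasis of $\Delta^\alpha_{\Omega,k}$, argues as in Proposition \ref{PoincareIneq}, and invokes ``the unboundedness of the eigenvalues''.

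Your diagnosis goes further than that proof and is correct. The expansion needs a uniform lower bound $\inf_\psi\lambda_\psi>0$, which unboundedness does not supply. That bound fails here, because $1_{\closure_k\Omega}\in W^{1,2,\alpha}(\closure_k\Omega,\mu_\Lambda)$ has positive norm yet is annihilated by $\Delta^\alpha_{\Omega,k}$. So the proposition as stated is false for every $C$, even a $C$ allowed to depend on $f$. The paper's argument has exactly the gap you point out, and any correct version must change the statement.

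\textbf{Gap in your first repair.} You claim that every component of $\Omega$ in the graphon reaches $\delta_k\Omega$ once $\Omega\neq X$. This is false when $k>\dim N(\mathcal{A})$, which is where the standing assumption ``$k$ sufficiently large'' puts you.

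\begin{itemize}
\item In that case the proof of Lemma \ref{degreeConst} shows that the connected components of $\kNNh(X)$ are balls forming a disjoint cover of $X$, with at least two of them. If $\Omega$ contains such a component $B$, then $1_B$ vanishes on $\delta_k\Omega$ and satisfies $\Delta^\alpha_{\Omega,k}1_B=0$.
\item For $\Omega=B$ you even get $\delta_k\Omega=\emptyset$, so $W_0^{1,2,\alpha}=W^{1,2,\alpha}$. The same happens for every compact open $\Omega$ once $k$ is large enough that $\Omega$ is a union of components.
\item Even when every component does reach $\delta_k\Omega$, the boundary condition is only an a.e.\ condition. For $\Omega=B\setminus\{a\}$ one has $\delta_k\Omega=\{a\}$, a null set, so $1_B$ is again a nonzero kernel element of $W_0^{1,2,\alpha}$.
\end{itemize}

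So your maximum-principle step needs an extra hypothesis: every component of the graph on $\closure_k\Omega$ must meet $\delta_k\Omega$ in a set of positive $\mu_\Lambda$-measure. Under that hypothesis $\ker\Delta^\alpha_{\Omega,k}\cap W_0^{1,2,\alpha}=0$. The kernel is finite-dimensional, and the functions vanishing a.e.\ on $\delta_k\Omega$ form a closed subspace of $L^2$. A short compactness argument then gives the inequality on $W_0^{1,2,\alpha}$.

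\textbf{Your fallback is correct.} The inequality holds, with $C=\lambda_0^{-1}$ up to a norm-equivalence constant, on the orthogonal complement of $\ker\Delta^\alpha_{\Omega,k}$ taken in $L^2(\closure_k\Omega,\deg_k^2\mu_\Lambda)$. This weight is the right one because $\deg_k(x)^2\,T^\alpha_{k,\Omega}(x,y)=\deg_k(x)\deg_k(y)\,d_\Lambda(x,y)^{-n\alpha}$ is symmetric, so the operator is self-adjoint there. The weighted norm is equivalent to the $L^2(\mu_\Lambda)$ norm, since $\deg_k$ is locally constant and positive on the compact set $\closure_k\Omega$. This also settles the caveat you raised in Step 2.
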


\begin{proof}
This proof uses the eigenbasis expansion of $L^2(\closure_k\Omega,\mu_\Lambda)$ w.r.t.\ $\Delta^\alpha_{\Omega,k}$,
and proceeds exactly as in the proof of Proposition \ref{PoincareIneq}, whereby using the
unboundedness of the eigenvalues of $\Delta^\alpha_{\Omega,k}$ on $L^2(\closure_k\Omega,\mu_\Lambda)$ for the chosen $\alpha > 0$.
\end{proof}

\begin{rem} The approach of Propositions \ref{PoincareIneq} and \ref{PIkNNh} uses the unboundedness condition for eigenvalues given only for $\alpha> 0$. 
A.\ Kochubei has an approach
via Sobolev inequalities in \cite{Kochubei2023}. In the future that approach should also be applied
to operators on compact $p$-adic analytic manifolds.
\end{rem}

Let in the following $H_{\mathbb{R}}(A)$ denote the function space of real-valued functions on a set $A$ corresponding to a given function space $H(A)$.
 The use of a basis of $H(A)$ of complex-valued eigenfunctions
for operators, like e.g.\ wavelets for $p$-adic Laplacians, poses no problem, because in the cases
considered here, it can easily be replaced by an eigenbasis of real-valued functions
for $H(A)$.
\newline

The homogeneous Dirichlet problem can be formulated as follows: Given
$f\in L^2(\Omega)_{\mathbb{R}}$, solve
\begin{align}\label{Dirichlet_kNNh}
\Delta^\alpha_{\Omega,k}u(x) &= f(x),&x\in\Omega
\\\nonumber
u(x) &= 0,&x \in\delta_k\Omega,
\end{align}
where $\Omega\subset X$ is an open subset, and where a solution $u$ is meant to be in the Sobolev space $W_0^{1,2,\alpha}(\closure_k\Omega,\mu_\Lambda)_{\mathbb{R}}$ of real-valued functions. With the Sobolev
inner product, it is a Hilbert space.
\newline

The Dirichlet form $Q_k$, defined as
\[
Q_k(u,v) = \langle\Delta^\alpha_{\Omega,k}u, v\rangle_{L^2(\closure_k\Omega,\mu_\Lambda)}\,,
\]
is a bilinear form on $W_0^{1,2,\alpha}(\closure_k\Omega,\mu_\Lambda)_{\mathbb{R}}$, and allows to prove the existence
and uniqueness of weak solutions of (\ref{Dirichlet_kNNh}) in Theorem 
\ref{DirichletSolution_kNNh} below.

\begin{thm}\label{DirichletSolution_kNNh}
Assume that $\alpha > 0$. Then the homogeneous Dirichlet problem
(\ref{Dirichlet_kNNh}) has a unique weak solution, i.e.\ for all 
$\phi\in W_0^{1,2,\alpha}(\closure_k\Omega,\mu_\Lambda)_0$, there exists
a unique $u\in W_0^{1,2,\alpha}(\closure_k \Omega,\mu_\Lambda)$ such that
\[
\langle\phi, v\rangle_{W_0^{1,2,\alpha}(\closure_k \Omega,\mu_\Lambda)} = 
Q_k(u,v)
\]
for all $v \in W_0^{1,2,\alpha}(\closure_k\Omega,\mu_\Lambda)$.
\end{thm}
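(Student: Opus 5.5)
The plan is to work entirely in an eigenbasis of $\Delta^\alpha_{\Omega,k}$ on $W_0^{1,2,\alpha}(\closure_k\Omega,\mu_\Lambda)_{\mathbb{R}}$. I do not go through Lax--Milgram in the Sobolev norm, because $Q_k(u,u)=\sum\lambda_\psi\absolute{a_\psi}^2$ cannot dominate $\norm{u}^2_{W}=\sum(1+\lambda_\psi^2)\absolute{a_\psi}^2$.

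\textbf{Step 1 (spectral setup).} Apply Theorem \ref{kNNh_Spectrum} to the compact manifold $(\closure_k\Omega,\mathcal{A}\cap\closure_k\Omega)$, restricted to functions vanishing on $\delta_k\Omega$. Wavelets supported in $\Omega$ with sufficiently small support are eigenfunctions by Lemma \ref{waveletSpec_kNNh}. The finite-dimensional complement of their span is invariant, and there $\Delta^\alpha_{\Omega,k}=D_kA_k$ is diagonalisable by detailed balance. I pass to a real eigenbasis $\{\psi\}$ with eigenvalues $\lambda_\psi\ge0$. Under the Dirichlet condition, the constant function on a connected piece is excluded, so Proposition \ref{PIkNNh} gives $\lambda_\psi\ge\lambda_0:=C^{-1}>0$ for all $\psi$. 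I also need $Q_k$ to be symmetric on $W_0^{1,2,\alpha}$. For this I use the $\deg_k$-weighted symmetry of $T^\alpha_{k,\Omega}$, and on wavelet supports $\deg_k$ is constant by Lemma \ref{degreeConst}. With this, $Q_k(u,v)=\sum_\psi\lambda_\psi a_\psi b_\psi$ for $u=\sum a_\psi\psi$ and $v=\sum b_\psi\psi$.

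\textbf{Step 2 (uniqueness).} Suppose $Q_k(u,v)=0$ for all $v\in W_0^{1,2,\alpha}$. Testing with $v=\psi$ gives $\lambda_\psi a_\psi=0$. Since $\lambda_\psi\ge\lambda_0>0$, every $a_\psi$ vanishes, so $u=0$. By linearity, solutions are unique.

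\textbf{Step 3 (existence).} For $\phi=\sum\varphi_\psi\psi$ the left-hand side is
\[
\langle\phi,v\rangle_{W_0^{1,2,\alpha}}=\sum_\psi(1+\lambda_\psi^2)\varphi_\psi b_\psi .
\]
Matching coefficients forces
\[
u=\sum_\psi\frac{1+\lambda_\psi^2}{\lambda_\psi}\,\varphi_\psi\,\psi ,
\]
that is, $u=(\Delta^\alpha_{\Omega,k})^{-1}\phi+\Delta^\alpha_{\Omega,k}\phi$ in functional-calculus notation. The identity $Q_k(u,v)=\langle\phi,v\rangle_W$ then holds termwise, and it holds for general $v$ by Pythagoras and continuity in $v$. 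Continuity in $v$ is immediate because $\absolute{Q_k(u,v)}\le\norm{\Delta u}_{L^2}\norm{v}_{L^2}$. The function $u$ vanishes on $\delta_k\Omega$ because each $\psi$ does.

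\textbf{Main obstacle.} The hard point is showing $u\in W_0^{1,2,\alpha}$. This requires
\[
\sum_\psi\frac{(1+\lambda_\psi^2)^3}{\lambda_\psi^2}\absolute{\varphi_\psi}^2<\infty ,
\]
which is roughly membership of $\phi$ in the domain of $(\Delta^\alpha_{\Omega,k})^2$. This is exactly where the index $0$ in the hypothesis $\phi\in W_0^{1,2,\alpha}(\closure_k\Omega,\mu_\Lambda)_0$ enters. I would read it as the space of test data: functions in $\mathcal{D}(\closure_k\Omega)_{\mathbb{R}}$ vanishing on $\delta_k\Omega$. This space is dense in $W_0^{1,2,\alpha}$. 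Each such $\phi$ is constant on balls of some fixed small radius, so it is orthogonal to all wavelets of smaller support. Hence its expansion has only finitely many nonzero $\varphi_\psi$, the sum for $u$ is finite, and $u\in W_0^{1,2,\alpha}$ trivially.

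If a larger class of $\phi$ is intended, I would first try to prove the weighted summability above directly, using the wavelet eigenvalue growth from Lemma \ref{waveletSpec_kNNh}. Failing that, I would show that the solution map $\phi\mapsto u$ is closable, and obtain $u$ as a limit of the finite-expansion solutions.
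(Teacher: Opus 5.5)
Your route differs from the paper's, and your reason for avoiding the paper's route is correct. The paper applies Lax--Milgram on $W_0^{1,2,\alpha}(\closure_k\Omega,\mu_\Lambda)$. It checks continuity of $Q_k$ and then claims coercivity via the chain $Q_k(v,v)\ge\norm{\Delta^{\alpha/2}_{\Omega,k}v}\ge C'\norm{\Delta^{\alpha}_{\Omega,k}v}\ge C\norm{v}$, attributing the middle inequality to the wavelet expansion. Your eigenbasis computation shows why this fails. We have $Q_k(v,v)=\sum_\psi\lambda_\psi\absolute{b_\psi}^2$ while $\norm{v}_W^2=\sum_\psi(1+\lambda_\psi^2)\absolute{b_\psi}^2$. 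For $\alpha>0$ the $\lambda_\psi$ are unbounded, which is the very fact used for Proposition \ref{PIkNNh}. So neither $\norm{\Delta^{\alpha/2}_{\Omega,k}v}\ge C'\norm{\Delta^\alpha_{\Omega,k}v}$ nor any bound $Q_k(v,v)\ge\beta\norm{v}_W^2$ can hold. Your explicit inversion avoids this. It gives uniqueness for arbitrary data (from $\lambda_\psi\ge\lambda_0>0$) and existence whenever $\phi$ has a finite expansion.

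Your \emph{Main obstacle} paragraph needs to change. The trailing subscript in $W_0^{1,2,\alpha}(\closure_k\Omega,\mu_\Lambda)_0$ is never defined. Given the Lax--Milgram framing and the earlier space $W_0^{1,2,\alpha}(\closure_k\Omega,\mu_\Lambda)_{\mathbb{R}}$, the paper intends every real $\phi\in W_0^{1,2,\alpha}$. Under that reading your own computation is a counterexample, not an obstacle:
\begin{itemize}
\item Testing with $v=\psi$ forces $a_\psi=(1+\lambda_\psi^2)\varphi_\psi/\lambda_\psi$.
\item So a solution in $W_0^{1,2,\alpha}$ exists only if $\sum_\psi\lambda_\psi^4\absolute{\varphi_\psi}^2<\infty$.
\item Choose wavelets $\psi_m$ supported in $\Omega$ with $\lambda_{\psi_m}\ge m$, and set $\varphi_{\psi_m}=\lambda_{\psi_m}^{-2}$. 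This gives $\phi\in W_0^{1,2,\alpha}$ with no solution.
\end{itemize}
Hence both of your fallbacks cannot work. Eigenvalue growth makes the weighted sum worse, and any limit of approximate solutions would have to carry the forced coefficients. Present your result as the theorem for data with $\sum_\psi\lambda_\psi^4\absolute{\varphi_\psi}^2<\infty$, in particular test functions. Put differently, the left-hand side is $\langle(1+(\Delta^\alpha_{\Omega,k})^*\Delta^\alpha_{\Omega,k})\phi,v\rangle_{L^2}$. For the weak form that (\ref{Dirichlet_kNNh}) actually calls for, $Q_k(u,v)=\langle f,v\rangle_{L^2}$ with $f\in L^2$, your inversion $a_\psi=f_\psi/\lambda_\psi$ works for every $f$.

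Step 1 also needs a repair, for three reasons. First, the kernel $T^\alpha_{k,\Omega}$ is symmetric only with respect to $\deg_k^2\,d\mu_\Lambda$. Second, the subspace of functions vanishing on $\delta_k\Omega$ is not invariant under $\Delta^\alpha_{\Omega,k}$. Third, $\norm{\Delta^\alpha_{\Omega,k}u}_{L^2(\closure_k\Omega,\mu_\Lambda)}$ sees the values of $\Delta^\alpha_{\Omega,k}u$ on $\delta_k\Omega$. So your diagonal formulas for $Q_k$ and $\langle\cdot,\cdot\rangle_W$ hold only on the wavelet block. This is harmless. By the vanishing integral (\ref{vanishingInt}), wavelets supported in $\Omega$ decouple from the finite-dimensional block in both forms. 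On that block the problem is a finite linear system, uniquely solvable once the Dirichlet operator has trivial kernel there. That last point is what you take from Proposition \ref{PIkNNh}. It requires every component of the $\kNNh$ graphon meeting $\Omega$ to reach $\delta_k\Omega$, since constants lie in the kernel of $\Delta^\alpha_{\Omega,k}$.
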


\begin{proof}
First, show that the bilinear form $Q_k$ is both, continuous and coercive,
on the Hilbert space $W_0^{1,2,\alpha}(\closure_k\Omega,\mu_\Lambda)$:
\begin{align*}
\absolute{Q_k(u,v)}&\le\norm{u}_{L^2(\closure_k\Omega,\mu_\Lambda)}
\norm{\Delta^\alpha_{\Omega,k}v}_{L^2(\closure_k\Omega)}
\\
&\le C\norm{\Delta^\alpha_{\Omega,k}u}_{L^2(\closure_k\Omega,\mu_\Lambda)}
\norm{\Delta^\alpha_{\Omega,k}v}_{L^2(\closure_k\Omega,\mu_\Lambda)}
\\
&\le C\norm{u}_{W_0^{1,2,\alpha}(\closure_k\Omega,\mu_\Lambda}\norm{v}_{W_0^{1,2,\alpha}(\closure_k\Omega,\mu_\Lambda)}
\end{align*}
implies that $Q_k$ is continuous. And
\begin{align*}
Q_k(v,v)&=\int_{X}v(x)\Delta^\alpha_{\Omega,k}v(x)\,d\mu_\Lambda(x)
\\
&=\int_X\int_X v(x)T_{\Omega,k}^\alpha(x,y)(u(x)-u(y))\,d\mu_\Lambda(y)\,d\mu_\Lambda(x)
\\
&\ge
\int_X\int_X v(x)T_{\Omega,k}^\alpha(x,y)(u(x)-u(y))\,d\mu_\Lambda(y)\,d\mu_\Lambda(x)
\\
&-\int_X\int_X v(y)T_{\Omega,k}^\alpha(x,y)(u(x)-u(y))\,d\mu_\Lambda(y)\,d\mu_\Lambda(x)
\\
&=\norm{\Delta_{\Omega,k}^{\frac{\alpha}{2}}v}_{L^2(\closure_k\Omega,\mu_\Lambda)}
\\
&\ge C'
\norm{\Delta_{\Omega,k}^\alpha v}_{L^2(\closure_k\Omega,\mu_\Lambda)}
\\
&\ge C\norm{v}_{L^2(\closure_k\Omega,\mu_\Lambda)}
\end{align*}
for some $C,C'>0$, and the last inequality is the Poincar\'e Inequality (Proposition \ref{PIkNNh}). Notice that the second to the last inequality follows from expanding $v$ in the wavelet eigenbasis and Lemma \ref{waveletSpec_kNNh}.
This now implies coerciveness w.r.t.\ the Sobolev norm. The assertion 
follows from the Lax-Milgram Theorem, cf.\ \cite[Corollary 5.8]{Brezis2010}.
\end{proof}

\begin{lem}
The operator $e^{-t\Delta^\alpha_{\Omega,k}}$ acts compactly on $L^2(\closure_k\Omega,\mu_\Lambda)$ for $t > 0$, $\alpha>0$.
\end{lem}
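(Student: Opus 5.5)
The plan is to mirror the earlier lemma on compactness of $e^{-tH}$, now with $H=\Delta^\alpha_{k,\Omega}$ acting on the compact $p$-adic analytic manifold $(\closure_k\Omega,\mathcal{A}\cap\closure_k\Omega)$. The argument shows that $e^{-t\Delta^\alpha_{k,\Omega}}$ is trace class, or at least Hilbert--Schmidt, for $t>0$, and then concludes compactness.

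The first step is to obtain a spectral decomposition of $\Delta^\alpha_{k,\Omega}$ on $L^2(\closure_k\Omega,\mu_\Lambda)$, following the proof of Theorem \ref{kNNh_Spectrum}.
\begin{enumerate}
\item Since $\closure_k\Omega$ is clopen in $X$, wavelets on $X$ with sufficiently small support inside $\closure_k\Omega$ are wavelets on $\closure_k\Omega$. The degree function is locally constant, and the kernel $T^\alpha_{k,\Omega}$ depends only on the ultrametric distance inside the support. Hence the computation of Lemma \ref{waveletSpec_kNNh} carries over: each such wavelet is an eigenfunction, with eigenvalue given by the analogous formula with $\Star_k(a)$ replaced by $\Star_k^\Omega(a)$.
\item The orthogonal complement of the span of these wavelets is finite-dimensional, by compactness of $\closure_k\Omega$ and Serre's theorem.
\item On that complement, $\Delta^\alpha_{k,\Omega}$ acts as $D_kA_k$ with $D_k$ diagonal and $A_k$ symmetric. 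It is therefore diagonalisable with non-negative eigenvalues, by the detailed balance argument of Theorem \ref{kNNh_Spectrum}.
\end{enumerate}
Together these give a (Riesz) basis of eigenfunctions with eigenvalues $\lambda\ge0$.

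The second step is the growth of the wavelet eigenvalues. For a wavelet supported on a ball of measure $\mu_\Lambda(B)\asymp q^{-nm}$, the dominant term is $\mu_\Lambda(B)^{n-\alpha}$-type. More precisely, from the proof of Lemma \ref{waveletSpec}, the eigenvalue behaves like $Cq^{\alpha nm}$ up to bounded factors. The number of wavelets at level $m$ is $O(q^{nm})$, since there are $O(q^{nm})$ balls and $q^n-1$ characters per ball. For $\alpha>0$ we therefore get
\[
\operatorname{tr} e^{-t\Delta^\alpha_{k,\Omega}}\le \dim W_k^\perp+\sum_{m\ge m_0} C'q^{nm}e^{-tcq^{\alpha nm}}<\infty .
\]
This series converges for every $t>0$, because the double-exponential decay beats the geometric growth.

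If the eigenbasis on the finite-dimensional part is not orthonormal, compactness still follows directly. The wavelet part is an orthonormal eigenbasis, and on it the semigroup is a diagonal operator with entries $e^{-t\lambda_\psi}\to0$. The remaining part is a finite-rank operator. The semigroup is thus a norm-limit of finite-rank operators, hence compact.

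The main obstacle is justifying the lower bound $\lambda_\psi\ge cq^{\alpha nm}$ uniformly. The integral term over $\Star_k^\Omega(a)\setminus B(a)$ is non-negative, so it only helps. The real task is to check that the constant $C$ arising from $\absolute{g_U}$ in the proof of Lemma \ref{waveletSpec} is bounded below uniformly over the finitely many charts. To settle this, I will use the finiteness of the $O_K$-compatible atlas and the fact that the canonical measure is nowhere vanishing and locally constant, so that $\absolute{g_U}$ takes only finitely many positive values on the compact set $\closure_k\Omega$.
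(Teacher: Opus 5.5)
You follow the paper's route: build an eigenbasis from small wavelets plus a finite-dimensional graph-Laplacian block as in Theorem \ref{kNNh_Spectrum}, then show the semigroup is trace class, which gives compactness. The paper only asserts this in two lines, and your level-by-level count and your diagonal-plus-finite-rank fallback are the right way to flesh it out. The gap is the growth bound $\lambda_\psi\gtrsim q^{\alpha nm}$, which you attribute to the proof of Lemma \ref{waveletSpec}; that computation does not give it. Your two descriptions are inconsistent. The stated term $\mu_\Lambda(B)^{n-\alpha}\asymp q^{-nm(n-\alpha)}$ is not $q^{\alpha nm}$ up to bounded factors, and it tends to $0$ for $\alpha<n$. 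The line $Cq^{\alpha nk-n(k+1)}(q^n-(1+(-1)^n))$ in that proof is $\asymp q^{nk(\alpha-1)}$. Redo the computation with the kernel actually used there, $d_\Lambda(x,y)^{-n\alpha}$ with $d_\Lambda$ proportional to $\norm{\phi(x)-\phi(y)}_{K^n}$ on small balls. For $x\in B$, only the points of $B$ at chart distance $q^{-m}$ from $x$ contribute. By the vanishing integral their contribution is $\asymp q^{nm\alpha}\mu_\Lambda(B)\psi(x)\asymp\mu_\Lambda(B)^{1-\alpha}\psi(x)$. The outer term $\int_{\Star_k^\Omega(a)\setminus B(a)}d_\Lambda(a,y)^{-n\alpha}\,d\mu_\Lambda(y)$ is $\asymp\sum_{j<m}q^{jn(\alpha-1)}$.

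Hence for $0<\alpha<1$ all wavelet eigenvalues are uniformly bounded. At $\alpha=1$ they grow only linearly in $m$, which gives compactness but not trace class for small $t$. Reading $d_\Lambda(a,y)$ as $\mu_\Lambda(a\wedge y)$, as the formula of Lemma \ref{waveletSpec_kNNh} suggests, only moves the threshold to $\alpha=1/n$. So $e^{-t\lambda_\psi}$ stays bounded away from $0$ on an infinite orthonormal family. Your trace series then diverges, and your diagonal fallback fails, since it also needs $\lambda_\psi\to\infty$. In fact, under that kernel, $e^{-t\Delta^\alpha_{\Omega,k}}$ is not compact at all. The uniform lower bound on $\absolute{g_U}$, which you call the main obstacle, is the harmless part; what is missing is control of the exponent. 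To close the argument you have two options. You can fix a normalisation of the kernel for which the inner term really is $\asymp\mu_\Lambda(B)^{-\alpha}$, e.g.\ $d_\Lambda^{-n(1+\alpha)}$; this is what the threshold $\alpha=0$ in the paper's spectral theorem implicitly presupposes, and your count $\sum_m q^{nm}e^{-tcq^{\alpha nm}}$ then goes through verbatim. Alternatively, restrict to $\alpha>1$, where $\lambda_\psi\gtrsim q^{nm(\alpha-1)}$ still makes the series converge for every $t>0$. The paper's proof takes the same growth for granted, so it does not supply this step either.
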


\begin{proof}
Similarly as in Theorem \ref{kNNh_Spectrum}, and by adapting the wavelet eigenvalue formula of Lemma \ref{waveletSpec_kNNh} accordingly, 
it can be seen the operator 
$e^{-t\Delta_{\Omega,k}^\alpha}$ is trace-class on the Hilbert space $L^2(\closure_k\Omega,\mu_\Lambda)$ for $t > 0$,
$\alpha > 0$. Hence, it acts compactly on $L^2(\closure_k\Omega,\mu_\Lambda)$.
\end{proof}

\begin{thm}
The operator $-\Delta^\alpha_{\Omega,k}$ generates a contraction semigroup $e^{-t\Delta_{\Omega,k}}$ with $t\ge0$ on $W^{1,2,\alpha}(\closure_k\Omega,\mu_\Lambda)$, satisfying the Markov property, where $\alpha > 0$, and $k\in\mathbb{N}$ sufficiently large.
\end{thm}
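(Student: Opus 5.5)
The plan is to transfer the proof of Theorem \ref{VTMarkov} to the operator $\Delta^\alpha_{\Omega,k}$ on the compact $p$-adic analytic manifold $(\closure_k\Omega,\mathcal{A}\cap\closure_k\Omega)$. Everything needed there was the spectral structure of the operator, and the analogue of that structure is already available for $\Delta^\alpha_{\Omega,k}$.

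\emph{Spectral decomposition.} First I will set up the eigenbasis, following Theorem \ref{kNNh_Spectrum} with $X$ replaced by $\closure_k\Omega$. For $k$ sufficiently large, Lemma \ref{degreeConst} makes $\deg_k$ constant on sufficiently small balls. The adapted form of Lemma \ref{waveletSpec_kNNh} then makes wavelets with small support eigenfunctions of $\Delta^\alpha_{\Omega,k}$, with positive eigenvalues. These eigenvalues are unbounded for $\alpha>0$, since the term $\mu_\Lambda(B(a))^{n-\alpha}$ does not stay bounded as the balls shrink. The orthogonal complement $W_k^\perp$ is finite-dimensional and is spanned by indicators of the maximal supports. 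On it the operator has the form $D_kA_k$, with $D_k$ diagonal and $A_k$ symmetric, so it is a graph Laplacian. By the detailed balance argument it is diagonalisable, and its eigenvalues are non-negative; the constant function has eigenvalue $0$. I will use the basis in which the diagonal weight $\deg_k$ symmetrises the operator, so the eigenbasis is orthonormal in the weighted sense. Using real-valued eigenfunctions is allowed by the earlier remark.

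\emph{Semigroup and contraction.} Next I will define $e^{-t\Delta^\alpha_{\Omega,k}}$ by acting with $e^{-t\lambda}$ on each eigenspace. Because every $\lambda\ge0$, this gives a strongly continuous semigroup on $L^2(\closure_k\Omega,\mu_\Lambda)$, and it leaves $W^{1,2,\alpha}(\closure_k\Omega,\mu_\Lambda)$ invariant. The preceding lemma shows it is compact for $t>0$. For contraction, I will check inequality (\ref{importantIneq}) on normalised eigenfunctions using the estimate $(1-e^{-t\lambda})/\lambda\le t$. I then extend it to all of $W^{1,2,\alpha}$ by Pythagoras. The Laplace-transform representation of the resolvent then gives $\norm{R(\lambda,-\Delta^\alpha_{\Omega,k})}\le 1/\lambda$. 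Theorem \ref{HilleYosidaContraction} applies because the operator is closed and densely defined: it is diagonal with an unbounded spectrum, and $\mathcal{D}(\closure_k\Omega)$ is dense.

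\emph{Markov property.} Finally I will verify (\ref{positive}), (\ref{lessThanOne}) and (\ref{indicator}) for $\Delta^\alpha_{\Omega,k}$.
\begin{itemize}
\item Property (\ref{indicator}) holds because the kernel annihilates constants: $u(x)-u(y)=0$, so $1_{\closure_k\Omega}$ has eigenvalue $0$.
\item For positivity I would prefer a cleaner argument than the torus-invariance one. The generator has the form $-\Delta^\alpha_{\Omega,k}u(x)=\int T(x,y)(u(y)-u(x))\,d\mu_\Lambda(y)$ with $T\ge0$. If the kernel is integrable, this is a bounded-plus-multiplication structure, and positivity follows from a Beurling--Deny type criterion or from a Trotter product with a positive jump kernel. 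For $\alpha>0$ the kernel is not integrable, so I will approximate: truncate the kernel to $d_\Lambda(x,y)\ge\varepsilon$, apply the positivity result to the truncated operators, and pass to the limit.
\item Property (\ref{lessThanOne}) then follows from positivity applied to $1-f$, combined with (\ref{indicator}).
\end{itemize}
The invariant measure is $\deg_k\,d\mu_\Lambda$, up to normalisation, because the kernel satisfies detailed balance with respect to it: $\deg_k(x)T(x,y)=\deg_k(y)\,d_\Lambda(x,y)^{-n\alpha}$ is symmetric in $x$ and $y$ on the star relation. Alternatively, one can copy the eigenspace-wise construction of $\pi_{W^{1,2}}$ from Theorem \ref{VTMarkov}.

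The main obstacle is the positivity step. The truncation argument needs convergence of the truncated semigroups to $e^{-t\Delta^\alpha_{\Omega,k}}$, which requires strong resolvent convergence on the wavelet eigenbasis: the truncated wavelet eigenvalues must increase monotonically to the full ones. I would check this directly from the formula of Lemma \ref{waveletSpec_kNNh}.
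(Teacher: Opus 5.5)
Your semigroup and contraction part is the same as the paper's proof. That proof consists only of three ingredients: the non-positivity of the eigenvalues of $-\Delta^\alpha_{\Omega,k}$, the analogue of (\ref{importantIneq}) on $W^{1,2,\alpha}(\closure_k\Omega,\mu_\Lambda)$, and the resolvent/Hille--Yosida step copied from Theorem \ref{VTMarkov}. It says nothing further about the Markov property, so it implicitly rests on the torus-invariance argument and the eigenspace-wise distribution $\pi_{W^{1,2}}$ of Theorem \ref{VTMarkov}.

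Your Markov part takes a genuinely different route:
positivity comes from the non-negativity of the jump kernel, via truncation plus a Trotter or Beurling--Deny argument;
the bound $\le 1$ comes from positivity applied to $1-f$ together with conservativeness;
and you name an explicit invariant measure.
This buys an actual proof of positivity, which torus invariance of a real $f$ does not supply, and a genuine invariant measure instead of a formal sum over eigenspaces. The price is an approximation argument. Once the kernel is symmetric (see below) you can drop the truncation: Beurling--Deny applies directly to the form $\mathcal{E}(u,u)=\frac12\int\int T(x,y)(u(x)-u(y))^2\,d\mu_\Lambda(y)\,d\mu_\Lambda(x)$, which normal contractions do not increase.

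However, your detailed-balance step is wrong as written. For $T(x,y)=\frac{\deg_k(y)}{\deg_k(x)}d_\Lambda(x,y)^{-n\alpha}$, the expression $\deg_k(x)T(x,y)=\deg_k(y)d_\Lambda(x,y)^{-n\alpha}$ is symmetric only if $\deg_k(x)=\deg_k(y)$. In general, $m(x)T(x,y)=m(y)T(y,x)$ forces $m\propto\deg_k^2$, so $\deg_k$ neither symmetrises the operator nor gives an invariant measure. This is not cosmetic. If $\deg_k$ really varied along edges, $\Delta^\alpha_{\Omega,k}$ would be self-adjoint only in $L^2(\deg_k^2\mu_\Lambda)$. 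Its eigenvectors on $W_k^\perp$ would then not be $L^2(\mu_\Lambda)$-orthogonal, and your Pythagoras step in the unweighted norm of $W^{1,2,\alpha}(\closure_k\Omega,\mu_\Lambda)$ would fail. A two-state chain with unequal degrees already shows that the semigroup can then increase that norm.

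The repair is to use the hypothesis that $k$ is sufficiently large. Then each star lies in a small ball on which $\deg_k$ is constant (Lemma \ref{degreeConst}), so $\deg_k(x)=\deg_k(y)$ whenever $y\in\Star_k(x)$, and the kernel is symmetric. Hence $\Delta^\alpha_{\Omega,k}$ is non-negative and self-adjoint on $L^2(\closure_k\Omega,\mu_\Lambda)$ with no weighting, and the contraction estimate holds. Moreover $\mu_\Lambda$ itself is invariant, since $\langle e^{-t\Delta^\alpha_{\Omega,k}}f,1\rangle=\langle f,e^{-t\Delta^\alpha_{\Omega,k}}1\rangle=\langle f,1\rangle$. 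You should state this explicitly, because it is exactly what makes the contraction argument legitimate, both yours and the paper's.

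A minor point: for $0<\alpha<n$ the term $\mu_\Lambda(B(a))^{n-\alpha}$ tends to $0$ as the balls shrink, so it does not make the eigenvalues unbounded. You do not need unboundedness here anyway. An operator that is diagonal with non-negative eigenvalues in an orthonormal eigenbasis is closed and densely defined on its maximal domain, and it generates a contraction semigroup whether or not its spectrum is bounded.
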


\begin{proof}
Since $-\Delta^{\alpha}_{\Omega,k}$ acts on the Hilbert space $W^{1,2,\alpha}(\closure_k\Omega,\mu_\Lambda)$ and its eigenvalues are bounded from above (being non-positive), it follows that $e^{-t\Delta_{\Omega,k}}$
is a strongly continuous semigroup acting on $W^{1,2,\alpha}(\closure_k\Omega,\mu_\Lambda)$ for $t\ge 0$. It
is also a contraction semigroup, because 
\[
\norm{\int_0^t e^{-\tau\Delta^\alpha_{\Omega,k}}u\,d\tau}_{W^{1,2,\alpha}(\closure_k\Omega,\mu_\Lambda)}
\le t\norm{u}_{W^{1,2,\alpha}(\closure_k\Omega,\mu_\Lambda)}
\]
by the same argument as (\ref{importantIneq}). As outlined in the proof of Theorem \ref{VTMarkov}, this
implies the contraction semigroup property.
\end{proof}

\subsection{Coordinate Laplacians}

Now, using the concept of $K$-analytic frames, one obtains in a natural way an integral structure:

\begin{lem}
A $K$-analytic frame on  a $p$-adic analytic $n$-manifold defines an integral structure on $X$.
\end{lem}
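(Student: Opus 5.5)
The natural candidate is the lattice spanned by the frame vectors. Given a $K$-analytic frame $x\mapsto(b_1(x),\dots,b_n(x))$ on $X$, set
\[
\Lambda_x=O_Kb_1(x)+\dots+O_Kb_n(x)\subset T_x(X)
\]
for $x\in X$.

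First I will check that each $\Lambda_x$ is an integral structure on $T_x(X)$. It is finitely generated, namely by $n$ elements. Since $(b_1(x),\dots,b_n(x))$ is a basis of $T_x(X)$, the module $\Lambda_x$ spans $T_x(X)$ over $K$. Hence $\Lambda_x$ is a free $O_K$-lattice of rank $n$.

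The substantive step is local constancy. Fix a chart $(U,\phi)$ and $z\in U$. Through $T_x\phi$, the frame becomes an analytic map
\[
B\colon\phi(U)\to\GL_n(K),\qquad \phi(x)\mapsto\bigl(T_x\phi(b_1(x)),\dots,T_x\phi(b_n(x))\bigr),
\]
and $T_x\phi(\Lambda_x)=B(\phi(x))O_K^n$. The target is
\[
B(\phi(x))O_K^n=B(\phi(z))O_K^n=:L_z
\]
for all $x$ near $z$. This holds if and only if
\[
B(\phi(z))^{-1}B(\phi(x))\in\GL_n(O_K).
\]

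Now $\GL_n(O_K)$ is an open subgroup of $\GL_n(K)$: it contains $1+\pi M_n(O_K)$, which is open. The map $x\mapsto B(\phi(z))^{-1}B(\phi(x))$ is continuous, being analytic, and sends $z$ to the identity. So the preimage of $\GL_n(O_K)$ is an open neighbourhood of $z$, and on it $T_x\phi(\Lambda_x)=L_z$. This is exactly the locality condition in the definition of an integral structure on $X$.

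The main point needing care is that "locally constant" in the definition refers to the chart image $T_x\phi(\Lambda_x)$, not to the frame itself. The frame varies analytically, hence non-constantly, so constancy of the lattice must come from the openness of $\GL_n(O_K)$. If the frame is instead taken as in the paper, pulled back from the standard basis via a trivialisation $T(X)=X\times K^n$, the argument is even simpler: in the trivialising coordinates $\Lambda_x$ corresponds to $O_K^n$ for every $x$. One then only needs that the transition to chart coordinates is continuous into $\GL_n(K)$, and the same openness argument applies.
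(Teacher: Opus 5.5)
Your proof is correct and follows the same route as the paper: take $\Lambda_x=O_Kb_1(x)+\dots+O_Kb_n(x)$ and deduce local constancy from the analyticity, hence continuity, of the frame. Your reduction to $B(\phi(z))^{-1}B(\phi(x))\in\GL_n(O_K)$, combined with the openness of $\GL_n(O_K)$ in $\GL_n(K)$, is the precise form of the paper's one-line step. The paper instead asserts that each individual module $O_Kb_i(x)$ is locally invariant, which generally fails because the vectors $b_i(x)$ can change direction; only the full lattice is locally constant, which is what you prove.
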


\begin{proof}
It suffices to show that the map
\[
x \mapsto O_K b_1(x) \oplus\dots\oplus O_K b_n(x) 
\]
is locally
constant on $X$, where 
$(b_1,\dots,b_n)$ is a $K$-analytic frame.
But since each $b_i(x)$ is a $K$-
analytic function in $x$, it follows that a change of $x$ in a sufficiently small
neighbourhood leaves the $O_K$-module $O_K b_i(x)\subset K^n$ invariant. This proves
the assertion.
\end{proof}

A $p$-adic analytic manifold $X$ is called \emph{parallelisable}, if there exists a $K$-
analytic frame on $X$. This is equivalent to the tangent bundle $T(X)$ being
trivial.

\begin{lem}
Any compact $p$-adic analytic manifold is parallelisable.
\end{lem}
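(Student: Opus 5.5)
The plan is to invoke Serre's classification theorem \cite[Th\'eor\`eme (1)]{Serre1965}, already used above to obtain triviality of the tangent bundle. By that theorem, a compact $p$-adic analytic $n$-manifold $X$ is $K$-analytically isomorphic to a finite disjoint union
\[
\bigsqcup_{i=1}^r O_K^n\,.
\]
Fix such an isomorphism $\Phi\colon X\to\bigsqcup_{i=1}^r O_K^n$. Each restriction $\Phi_i=\Phi|_{X_i}$ to the clopen piece $X_i=\Phi^{-1}(O_K^n\times\{i\})$ is then a chart compatible with the given analytic structure. Hence the collection $\mathset{(X_i,\Phi_i)}_{i=1}^r$ is an atlas in the same class $[\mathcal{A}]$. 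Its charts are pairwise disjoint, so there are no transition maps to check.

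Next, I will construct the frame chart by chart. For $x\in X_i$, let $b_j(x)\in T_x(X)$ be the class $[(X_i,\Phi_i,e_j)]$, where $e_1,\dots,e_n$ is the standard basis of $K^n$. Under the identification $\tau_i\colon\pi_X^{-1}(X_i)\to\Phi_i(X_i)\times K^n$, the assignment
\[
x\mapsto(b_1(x),\dots,b_n(x))
\]
corresponds to $z\mapsto(z,e_1,\dots,e_n)$. This map is constant in the fibre coordinate, hence $K$-analytic. Since $e_1,\dots,e_n$ form a basis, each $(b_1(x),\dots,b_n(x))$ is a basis of $T_x(X)$, and viewed as a matrix it equals the identity in $\GL_n(K)$. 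Because the $X_i$ are disjoint and open, these local assignments glue to a global $K$-analytic section of $\mathcal{F}(TX)\to X$, which gives the required $K$-analytic frame.

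It remains to check analyticity with respect to an arbitrary chart $(V,\psi)\in\mathcal{A}$ rather than the Serre charts $\Phi_i$. On $V\cap X_i$, the frame is given in $\psi$-coordinates by the Jacobian matrix $(\psi\circ\Phi_i^{-1})'(\Phi_i(x))$. This matrix is $K$-analytic in $x$ because the transition map is bi-analytic, and it is invertible because the transition map is locally bi-analytic. The same matrix computation shows the equivalent statement $T(X)\cong X\times K^n$, via $\bar v\mapsto(x,v)$ on each $X_i$.

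The main obstacle is one of foundations rather than computation: the paper only sketches the notion of an analytic principal $\GL_n(K)$-bundle. I will therefore interpret ``$K$-analytic frame'' concretely as an $n$-tuple of analytic sections of $T(X)$ that is pointwise a basis, and verify analyticity in the bundle charts $\tau_\alpha$ of the tangent-bundle lemma. If one prefers to avoid Serre's theorem, an alternative is available. Compactness together with the ultrametric property lets one refine any finite atlas into finitely many disjoint clopen chart domains, by taking successive set differences of balls, which are clopen. This disjoint refinement is really the only input needed.
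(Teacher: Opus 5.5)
Your proposal is correct and follows essentially the paper's route. The paper also invokes Serre's theorem (via Igusa's Lemma 7.5.1) to reduce to finitely many disjoint pieces bi-analytic to translates of $\pi^eO_K^n$, trivialises the tangent bundle on each piece, and glues. Your explicit frame $b_j(x)=[(X_i,\Phi_i,e_j)]$, together with the Jacobian check in arbitrary charts, simply makes that gluing step precise, and your remark that the disjoint clopen refinement of a finite atlas is the only real input is accurate.
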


\begin{proof}
The idea is to use Serre’s Theorem \cite[Théorème (1)]{Serre1965}. Following the
proof in \cite[Lemma 7.5.1]{Igusa2002}, assume that $X$ has a finite atlas whose underlying
sets are compact subsets of $X$. These sets are bi-analytic to disjoint unions of
translates of $\pi^e O_K^n$
for sufficiently large $e\in\mathbb{N}$. As their tangent bundles are
all trivial, it follows that 
$T(X)$ is a trivial vector bundle of rank $n$.
\end{proof}

Observe now that  if $\Lambda(X)$ is an integral structure on $X$, then the assignment
\[
(x,v_x)\to\Lambda(TX)_{(x,v_x)}:=\Lambda(X)_x\times\Lambda(X)_x \subset K^{2n}
\]
defines an integral structure on the $2n$-manifold $T(X)$.
Assuming that $\Lambda(X)$ comes from a frame $(b_1(x),\dots,b_n(x))$, then the coordinate Laplacian
\[
\Delta_w u(x)=\int_X w(b_i(x),b_i(y))(u(x)-u(y))\,d\mu_{\Lambda(X)}(y)
\]
with some kernel function
$w(\xi,\eta)
$ for $\xi,\eta\in T(X)$ defines an operator on $\mathcal{D}(X)$.
This operator is called the
\emph{$i$-th coordinate Laplacian on $X$ with respect to the frame $(b_1(x),\dots,b_n(x))$}.
\newline

For example, the $i$-th coordinate Vladimirov Laplacian:
\[
\Delta_{b_i}^\alpha u(x)=\int_X d_{\Lambda(TX)}(b_i(x),b_i(y))^{-\alpha n}(u(x)-u(y))\,d\mu_{\Lambda(X)}(y)
\]
or the $i$-th coordinate height-$k$-nearest neighbour Laplacian:
\[
\Delta_{k,b_i}^\alpha u(x)=\int_XT_{T\!X,k}^\alpha(b_i(x),b_i(y))(u(x)-u(y))\,d\mu_{\Lambda(X)}(y)
\]
with
\[
T_{T\!X,k}^\alpha(\xi,\eta)=\begin{cases}
\frac{\displaystyle\deg_k(\eta)}{\displaystyle\deg_k(\xi)}\cdot d_{\Lambda(X)}(\xi,\eta)^{-2n\alpha},&
\eta\in\Star_{T\!X,k}(\xi)
\\
0,&\text{otherwise}
\end{cases}
\]
can now be defined accordingly for the frame $(b_1(x),\dots,b_n)$.

\subsection{Elliptic Dirichlet Problem}

The idea of this subsection is to generalise some results about $p$-adic elliptic diffusion operators from \cite{ellipticBVP}. Given an analytic frame $(b_1(x),\dots,b_n(x))$ on a compact $p$-adic analytic $n$-manifold $X$, define the following operator:
\begin{align*}
P(\Delta_k)u&=\sum\limits_{i,j=1}^d\Delta_{k,b_i}^\alpha(a^{ij}\Delta_{k,b_j}^\alpha u)
\end{align*}
for $u\in\mathcal{D}(X)$, using functions
\[
a^{ij}\colon X\to\mathbb{R}
\]
for $i,j=1,\dots,n$
such that 
\[
A(x)=(a^{ij}(x))\in\mathbb{R}^{n\times n}
\]
is symmetric. If $A(x)$ is positive definite for almost all $x \in X$, and the smallest positive eigenvalue of $A$ is always at least $\theta > 0$, then the operator $P(\Delta)$, resp.\ $P(\Delta_k)$ is called \emph{elliptic}.
\newline

The Dirichlet Problem associated with operator $P(\Delta_k)$ is the following:
\begin{align}\label{DirichletProblemElliptic}
P(\Delta_k)u(x)&=f(x),&x\in \Omega\,,
\\\nonumber
u(x)&=0,&x\in\delta_k\Omega\,.
\end{align}
And a function $u\in W_0^{1,2,\alpha}(\closure_k\Omega,\mu_\Lambda)$ is a
\emph{weak solution} of (\ref{DirichletProblemElliptic}), if
\[
B[u,\phi]
:=\int_\Omega\sum\limits_{i,j=1}^d P(\Delta_{\Omega,k}^\alpha)u(x)\phi(x)\,d\mu_\Lambda(x)
=\int_\Omega f(x)\phi(x)\,d\mu_\Lambda(x)
\]
for all $\phi\in W_0^{1,2,\alpha}(\closure_k\Omega,\mu_\Lambda)$.
\newline

Using the adjoint operator $\Delta_{k,b_i}^{\alpha,*}$ of $\Delta^\alpha_{k,b_i}$ for $i=1,\dots,n$, obtain
\[
\int_{\closure_k\Omega}\Delta_{k,b_j}(a^{ij}(x)\Delta_{k,b_i} u(x)\phi(x)\,d\mu_\Lambda(x)
=\int_{\closure_k\Omega}a^{ij}(x)\Delta_{k,b_i}u(x)\Delta_{k,b_j}^{\alpha,*}\phi(x)\,d\mu_\Lambda(x)\,.
\]
Hence,
\begin{align}\label{ellipticForm}
B[u,\phi]=\int_{\closure_k\Omega}\sum\limits_{i,j=1}^n
a^{i,j}(x)\Delta_{k,b_i}u(x)\Delta_{k,b_j}^{\alpha,*}\phi(x)\,d\mu_\Lambda(x)\,.
\end{align}

\begin{thm}[Energy estimates]
Assume that $a^{ij}\in \mathcal{D}(X)$ for $i,j=1,\dots,n$.
Then there exist constants $\alpha,\beta > 0$ such that
\begin{align*}
\absolute{B[u,v]}&\le\alpha\norm{u}_{W_0^{1,2,\alpha}(\closure_k\Omega,\mu_\Lambda)}\norm{v}_{W_0^{1,2,\alpha}(\closure_k\Omega,\mu_\Lambda)}
\\
\beta\norm{u}_{W_0^{1,2,\alpha}(\closure_k\Omega,\mu_\Lambda)}&\le B[u,u]
\end{align*}
for all $u,v\in W_0^{1,2,\alpha}(\closure_k\Omega,\mu_\Lambda)$.
\end{thm}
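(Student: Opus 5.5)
The plan is to treat the two estimates separately, both starting from the representation (\ref{ellipticForm}) of $B[u,v]$ as a sum of $L^2(\closure_k\Omega,\mu_\Lambda)$-pairings of $a^{ij}\Delta_{k,b_i}u$ with $\Delta^{\alpha,*}_{k,b_j}v$. The constants in the statement are renamed here to avoid a clash with the exponent $\alpha$; write them as $c_1,c_2$.

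\emph{Continuity.} Since $a^{ij}\in\mathcal{D}(X)$ and $X$ is compact, each $a^{ij}$ takes only finitely many values. Hence $M:=\max_{i,j}\sup_X\absolute{a^{ij}}<\infty$. Cauchy--Schwarz applied termwise to (\ref{ellipticForm}) then gives
\[
\absolute{B[u,v]}\le M\sum_{i,j=1}^n\norm{\Delta_{k,b_i}u}_{L^2}\norm{\Delta^{\alpha,*}_{k,b_j}v}_{L^2}.
\]
It remains to bound $\norm{\Delta_{k,b_i}u}_{L^2}$ and $\norm{\Delta^{\alpha,*}_{k,b_j}v}_{L^2}$ by the Sobolev norm. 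I would argue via the wavelet eigenbasis. For $k$ sufficiently large, the coordinate operators $\Delta_{k,b_i}^\alpha$ are built from the same kind of kernel as $\Delta^\alpha_{\Omega,k}$, namely the kernel $T_{T\!X,k}^\alpha$ pulled back along $x\mapsto b_i(x)$. The frame $b_i$ is locally constant after refinement and the atlas is $O_K$-compatible (Lemma \ref{equalising}), so small balls in $X$ correspond to balls of equal radius, and wavelets with sufficiently small support are simultaneous eigenfunctions. Lemma \ref{waveletSpec_kNNh}, applied to $T(X)$ with its integral structure $\Lambda(TX)$, then shows that the eigenvalues of $\Delta_{k,b_i}^\alpha$ and of $\Delta_{\Omega,k}^\alpha$ on a given wavelet are both of order $\mu_\Lambda(B(a))^{-\alpha}$ up to bounded factors. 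The finite-dimensional complement $W_k^\perp$ contributes only a bounded operator. This yields $\norm{\Delta_{k,b_i}u}\le C_i\norm{u}_{W^{1,2,\alpha}_0}$, and the same bound for the adjoint, whose eigenvalues on the real eigenbasis are the complex conjugates. Setting $c_1=Mn^2\max_iC_i^2$ proves the first inequality.

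\emph{Coercivity.} Because $a^{ij}$ is locally constant, I would expand $u$ in the common wavelet eigenbasis on balls small enough that $A(x)$ is constant on each support. On the wavelet part, $\Delta_{k,b_i}u$ and $\Delta^{\alpha,*}_{k,b_j}u$ are diagonal. Writing $\xi(x)=(\Delta_{k,b_1}u(x),\dots,\Delta_{k,b_n}u(x))$, the integrand in $B[u,u]$ becomes, up to the relation between $\Delta^*$ and $\Delta$ (equal on the real symmetric part), the quadratic form $\xi^\top A(x)\xi\ge\theta\absolute{\xi}^2$ by ellipticity. This gives $B[u,u]\ge\theta\sum_i\norm{\Delta_{k,b_i}u}^2_{L^2}$. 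Next I would compare $\sum_i\norm{\Delta_{k,b_i}u}^2$ from below with $\norm{\Delta^\alpha_{\Omega,k}u}^2$, using the same two-sided eigenvalue comparison as in the continuity step. Finally the Poincar\'e inequality, Proposition \ref{PIkNNh}, gives $\norm{u}_{L^2}\le C\norm{\Delta^\alpha_{\Omega,k}u}$, so that $B[u,u]\ge c\norm{u}^2_{W_0^{1,2,\alpha}}$. This is the squared-norm form of the stated bound. The version in the statement follows after normalising, or equivalently is the form needed for Lax--Milgram as in Theorem \ref{DirichletSolution_kNNh}.

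\emph{Main obstacle.} The hard step is the two-sided comparison between the coordinate operators $\Delta_{k,b_i}^\alpha$ and $\Delta_{\Omega,k}^\alpha$. This needs a lower bound in the coercivity step, not just an upper bound, including on the finite-dimensional graph part, where $\Delta_{k,b_i}$ could have a kernel larger than the constants. On the wavelet part I would first try to extract the lower bound directly from the eigenvalue formula of Lemma \ref{waveletSpec_kNNh}: its second summand $\mu_\Lambda(B(a))^{n-\alpha}(1-q^{-n}(1+(-1)^n))$ is independent of $i$. On the finite part I would use the Dirichlet condition $u|_{\delta_k\Omega}=0$ to exclude constants and then argue by finite dimensionality.
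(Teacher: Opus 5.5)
Your skeleton matches the paper's: a sup bound on the $a^{ij}$ plus Cauchy--Schwarz for continuity, and ellipticity followed by a Poincar\'e inequality for coercivity. However, you route both estimates through a two-sided comparison between the coordinate operators $\Delta^\alpha_{k,b_i}$ and $\Delta^\alpha_{\Omega,k}$. That step is a genuine gap: it is not proved, and the justification you sketch does not hold.

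\emph{Why the comparison does not go through.} Lemma~\ref{waveletSpec_kNNh} applied to $T(X)$ concerns an operator on functions on $T(X)$, integrated against the measure of $\Lambda(TX)$. By contrast, $\Delta^\alpha_{k,b_i}$ pulls $T^\alpha_{T\!X,k}$ back along the analytic (not locally constant) section $x\mapsto b_i(x)$ and integrates against $\mu_{\Lambda(X)}$ on $X$. So neither the simultaneous eigenfunction property nor the eigenvalue formula transfers for free.

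More seriously, the kernel of $\Delta^\alpha_{k,b_i}$ carries the exponent $-2n\alpha$, while that of $\Delta^\alpha_{\Omega,k}$ carries only $-n\alpha$, and both are integrated over the $n$-dimensional $X$. Since $b_i$ is locally Lipschitz, the distance of $b_i(x),b_i(y)$ in $T(X)$ is locally controlled by that of $x,y$. Hence, in the range of $\alpha$ where these eigenvalues are unbounded at all (as Proposition~\ref{PIkNNh} needs), the eigenvalue of $\Delta^\alpha_{k,b_i}$ on a wavelet grows like a strictly higher power of the inverse radius of its support than that of $\Delta^\alpha_{\Omega,k}$. Their ratio is unbounded. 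So the bound $\norm{\Delta^\alpha_{k,b_i}u}\le C_i\norm{u}_{W_0^{1,2,\alpha}}$ you need for continuity cannot come out of this comparison when the Sobolev norm is built from $\Delta^\alpha_{\Omega,k}$. On the finite-dimensional part, as you concede, the Dirichlet condition only removes constants, not a possibly larger common kernel.

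\emph{What the paper does instead.} The paper never compares with $\Delta^\alpha_{\Omega,k}$. It applies Proposition~\ref{PoincareIneq} directly to $\Delta^\alpha_{k,b_i}$ and to $\Delta^{\alpha,*}_{k,b_i}$, whose eigenvalues coincide. In effect it measures $u$ in the Sobolev norm attached to the operators that actually occur in $B$. Then each $\norm{\Delta^\alpha_{k,b_i}u}$ is dominated by that norm, and Poincar\'e only converts between this seminorm and the full norm. Coercivity is the ellipticity (Rayleigh-quotient) lower bound by $\theta$, followed by Poincar\'e once more. That is the missing idea: drop the comparison with $\Delta^\alpha_{\Omega,k}$ and work with the norm of the coordinate operators themselves.

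\emph{Two smaller points.}
\begin{enumerate}
\item Reducing the integrand of $B[u,u]$ to $\xi^\top A(x)\xi$ requires $\Delta^{\alpha,*}_{k,b_j}u=\Delta^\alpha_{k,b_j}u$. This holds on wavelets of small support, where $\deg_k$ is constant and the kernel is locally symmetric. It fails on the finite-dimensional complement, where the factor $\deg_k(\eta)/\deg_k(\xi)$ breaks symmetry, so ``equal on the real symmetric part'' needs an actual argument.
\item The non-squared bound $\beta\norm{u}\le B[u,u]$ does not follow ``after normalising''. It is not invariant under $u\mapsto tu$ and fails as $t\to0$. The squared form you derive is the correct one, and it is the one Lax--Milgram uses.
\end{enumerate}
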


\begin{proof}
Adapt the proof of \cite[Theorem 5.3]{ellipticBVP} as follows:
\begin{align*}
\absolute{B[u,v]}&\le
\sum\limits_{i,j=1}^n \norm{a^{i,j}}_{L^\infty}
\int_{\closure_k\Omega}\absolute{\Delta_{k,b_i}^\alpha u(x)}\absolute{\Delta_{k,b_j}^{\alpha,*}v(x)}\,d\mu_\Lambda(x)
\\
&\le\alpha\norm{u}_{W_0^{1,2,\alpha}(\closure_k\Omega,\mu_\Lambda)}\norm{v}_{W_0^{1,2,\alpha}(\closure_k\Omega,\mu_\Lambda)}
\end{align*}
for some $\alpha>0$, where the Poincar\'e inequality (Proposition \ref{PoincareIneq}) has been used for both $\Delta_{k,b_i}^\alpha$ and $\Delta_{k,b_i}^{\alpha,*}$. Notice that it is also applicable to the latter operator, because its eigenvalues coincide with those of $\Delta^\alpha_{k,b_i}$ for $i=1,\dots,n$.

\smallskip
The following inequality 
\begin{align*}
\theta&\int_{\closure_k\Omega}
\sum\limits_{i,j}^n\Delta_{k,b_i}^\alpha u(x)\Delta_{k,b_j}^{\alpha,*}u(x)\,d\mu_\Lambda(x)
\\
&\le\int_{\closure_k\Omega}\sum\limits_{i,j=1}^n a^{i,j}(x)\Delta_{k,b_i}u(x)\Delta_{k,b_j}^{\alpha,*}u(x)\,d\mu_\Lambda(x)=B[u,u]
\end{align*}
holds true, because the eigenvalues of the matrix $A(x)=(a^{i,j}(x))$ are locally constant in $x\in X$, and by ellipticity: 
view the terms in the inequality above as coming from a Rayleigh quotient involving $A(x)$ as a linear operator on $W_0^{1,2,\alpha}(X,\mu_\Lambda)$. Using the Poincar\'e Inequality of Proposition \ref{PoincareIneq}, the second inequality now follows.
\end{proof}

\begin{cor}
Assume that the operator $P(\Delta_k)$ is elliptic. Then (\ref{DirichletProblemElliptic}) has a unique weak
solution in $W_0^{1,2,\alpha}(\closure_k\Omega,\mu_\Lambda)$.
\end{cor}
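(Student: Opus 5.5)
The plan is a direct application of the Lax--Milgram Theorem, cf.\ \cite[Corollary 5.8]{Brezis2010}, to the bilinear form $B[\cdot,\cdot]$ from (\ref{ellipticForm}). We work on the Hilbert space $H:=W_0^{1,2,\alpha}(\closure_k\Omega,\mu_\Lambda)_{\mathbb{R}}$ of real-valued functions, as in the homogeneous problem (\ref{Dirichlet_kNNh}). The first step is to check that $H$ is a Hilbert space. It is a closed subspace of $W^{1,2,\alpha}(\closure_k\Omega,\mu_\Lambda)$, because the condition $f|_{\delta_k\Omega}=0$ is preserved under $L^2$-limits, and $\delta_k\Omega$ is clopen. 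Completeness then comes from the Hilbert space lemma for the Sobolev space $W^{1,2}_H$.

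The second step verifies the two hypotheses of Lax--Milgram. Ellipticity of $P(\Delta_k)$ is assumed, and the coefficients $a^{ij}$ are taken in $\mathcal{D}(X)$ as in the Energy estimates theorem. Under these assumptions, the first inequality of that theorem gives boundedness: $\absolute{B[u,v]}\le\alpha\norm{u}_H\norm{v}_H$. The second inequality gives coercivity. Here one must be slightly careful. As printed, the estimate reads $\beta\norm{u}\le B[u,u]$ rather than $\beta\norm{u}^2\le B[u,u]$. I would recheck its proof with the square. The argument runs through the lower bound
\[
B[u,u]\ge\theta\sum_i\norm{\Delta^\alpha_{k,b_i}u}^2_{L^2(\closure_k\Omega,\mu_\Lambda)},
\]
which comes from the Rayleigh quotient of $A(x)$ with smallest eigenvalue at least $\theta$. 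Combining this with the Poincar\'e inequality (Proposition \ref{PIkNNh}, applied to each coordinate Laplacian) yields $\norm{u}_{L^2}^2\le C\norm{\Delta^\alpha_{k,b_i}u}^2_{L^2}$. Together these give $B[u,u]\ge\beta'\norm{u}_H^2$ for a suitable $\beta'>0$.

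The third step treats the right-hand side. For $f\in L^2(\Omega)_{\mathbb{R}}$, extend $f$ by zero to $\closure_k\Omega$ and consider the functional $\ell(\phi)=\int_\Omega f\phi\,d\mu_\Lambda$. This functional is continuous on $H$, since
\[
\absolute{\ell(\phi)}\le\norm{f}_{L^2}\norm{\phi}_{L^2}\le\norm{f}_{L^2}\norm{\phi}_H .
\]
Lax--Milgram then yields a unique $u\in H$ with $B[u,\phi]=\ell(\phi)$ for all $\phi\in H$. By the definition of weak solution of (\ref{DirichletProblemElliptic}), together with the integration-by-parts identity leading to (\ref{ellipticForm}), this $u$ is precisely the unique weak solution.

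I expect the main obstacle to be the coercivity estimate. There are two issues. First, the Rayleigh-quotient step involves mixed terms $\Delta_{k,b_i}u\,\Delta^{\alpha,*}_{k,b_j}u$ rather than a symmetric quadratic form in a single vector. Second, the Sobolev norm on $H$ is built from $\Delta^\alpha_{\Omega,k}$ and not from the coordinate Laplacians. To handle both, I would first try to compare the coordinate operators $\Delta^\alpha_{k,b_i}$ with $\Delta^\alpha_{\Omega,k}$ on the common wavelet eigenbasis (Lemma \ref{waveletSpec_kNNh}), where both act diagonally with comparable eigenvalues for small supports. The remaining finite-dimensional part $W_k^\perp$ I would treat by direct linear algebra. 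If the comparison fails, I would fall back on simply invoking the Energy estimates theorem as stated.
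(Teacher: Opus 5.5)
Your proposal follows the paper's proof. The paper simply invokes the Lax--Milgram Theorem \cite[Corollary 5.8]{Brezis2010}, with boundedness and coercivity of $B$ supplied by the preceding Energy estimates theorem, which is exactly your fallback; your extra steps (closedness of $W_0^{1,2,\alpha}(\closure_k\Omega,\mu_\Lambda)_{\mathbb{R}}$ and continuity of $\phi\mapsto\int_\Omega f\phi\,d\mu_\Lambda$) just make explicit what the paper leaves implicit.

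You are right that coercivity must be read as $\beta\norm{u}^2_{W_0^{1,2,\alpha}(\closure_k\Omega,\mu_\Lambda)}\le B[u,u]$. Your own sketched re-derivation does not close as written, for exactly the reasons you list: the mixed terms involve $\Delta^{\alpha,*}_{k,b_j}$, and the Sobolev norm is built from $\Delta^\alpha_{\Omega,k}$ rather than the coordinate Laplacians. These are legitimate concerns about the Energy estimates theorem itself rather than about this corollary, which takes that theorem as given.
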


\begin{proof}
This follows from the Lax-Milgram Theorem 
\cite[Corollary 5.8]{Brezis2010}.
\end{proof}
\section{Outlook}


Applications of diffusion on ultrametric
analytic manifolds, and in particular the novel elliptic operators, are envisioned in \emph{locally} hierarchical data. For example, if there are interactions between hierarchical clusters of data, then these clusters are ultrametric, whereas the whole dataset is then in general only locally ultrametric. It can be suggested to further develop the methods of 
\cite{AngelDiss} towards using $p$-adic manifold methods for deep learning.
\newline

The application we have in mind for the results developed in this contribution is the $p$-adic analytic manifold underlying an algebraic variety $X$ defined over $p$-adic number fields. One goal in this context is to extract number-theoretic information about $X$ through the spectral properties of, as well as the properties of the stochastic process related with, diffusion operators on $X$, like the elliptic operators of this article. 
\newline

On the other hand, generalising the theory developed here to the case of locally hierarchical structures with irregular branching entails the task of defining ultrametric manifolds, and then develop a notion of elliptic diffusion operators on them. This would allow to generalise results of \cite{VPZeta}, and enable applications on the transcendental part of the complex $p$-adic points of algebraic varieties, as has been initiated in \cite{SchottkyTrans,HalwasBA}.
And this would extend the extraction of number-theoretic information of an algebraic variety through a spectral and stochastic study of its $p$-adic transcendental points.

\section*{Acknowledgements}
Dear Branko, Many years! To you who enabled significant interactions via inviting me to conferences organised by you and through your research in more or less related fields which inspired my own research.
\newline

The author also acknowledges the question raised by Wilson Z\'u\~niga-Galindo on a $p$-adic conference in 2019 on how to possibly construct diffusion operators on $p$-adic  manifolds. This question inspired the author's research up to this day and is foundational  also to this work. Also acknowledged are frequent discussions with \'Angel M\'oran Ledezma which significantly contributed to this work, too.
\newline

This research is partially supported by the
Deutsche Forschungsgemeinschaft
under grant number 469999674.


\begin{thebibliography}{19} 

\bibitem{hearGenusMumf}
P.E. Bradley. \emph{Heat equations and hearing the genus on p-adic Mum
ford curves via automorphic forms}. Moscow Mathematical Journal,
{\bf 25}(4) (2025), 447--478.

\bibitem{ellipticBVP}
P.E. Bradley. \emph{Boundary Value Problems for $p$-Adic Elliptic Parisi-Zúñiga Diffusion}, Journal of Pseudo-Differential Operators and Applications, \textbf{17} (2026), 1.

\bibitem{diff_AMf_p}
P.E. Bradley. \emph{Chapter 1: Diffusion on $p$-adic Analytic Manifolds}, in: \emph{Compact Ultrametric Analytic Manifolds and Applications in
Number Theory}, ongoing habilitation project, Karls\-ruhe Institute of Technology (2026)

\bibitem{SchottkyTrans}
P.E. Bradley. \emph{Schottky invariant diffusion on the transcendent $p$-adic upper half
plane}, Nonlinear Anal. \textbf{263} (2026), 113947

\bibitem{HearingSerre} 
P.E. Bradley and A. Moran Ledezma. \emph{Hearing the Serre invariant of a compact $p$-adic analytic manifold} (2025), arXiv:2511.20631 [math.NT]

\bibitem{DiffMfp}
P.E. Bradley. \emph{Diffusion operators on p-adic analytic manifolds} (2025), arXiv:2510.22563 [math.AP]

\bibitem{VPZeta}
P.E. Bradley. \emph{Vladimirov-Pearson Operators on $\zeta$-regular Ultrametric Cantor Sets},
Mathematische Nachrichten, \textbf{298} (2025), 3613--4016

\bibitem{Brezis2010}
H. Brezis. \emph{Functional Analysis, Sobolev Spaces and Partial Diﬀerential
Equations}. Springer, New York (2010)

\bibitem{BKL2026}
P. Bürgisser, A. Kulkarni, and A. Lerario, \emph{Nonarchimedean integral
geometry}, Selecta Mathematica, \textbf{32} (2026), 10.

\bibitem{EK2000}
K.-J. Engel and R. Nagel. \emph{One-Parameter Semigroups for Linear Evolution
Equations}. Springer, New York (2000)

\bibitem{HalwasBA}
P. Halwas, \emph{The trees associated with the transcendental $p$-adic numbers}.
Bachelor’s thesis, Karlsruhe Institute of Technology (2025).

\bibitem{Igusa2002}
J.-I. Igusa, \emph{An introduction to the theory of local Zeta functions}, volume 14
of AMS/IP studies in advanced mathematics. American Mathematical
Society, International Press (2002).

\bibitem{XKZ2018}
A. Khrennikov, S. Kozyrev, and W.A. Zúniga-Galindo. \emph{Ultrametric Pseudodifferential Equations and Its Applications}. Encyclopedia of Mathematics and Its Applications, vol. 168. Cambridge University Press (2018).

\bibitem{Kochubei2001}
A.N. Kochubei, \emph{Pseudo-Diﬀerential Equations and Stochastics Over Non-Archimedean Fields}. Marcel Dekker, Inc., New York (2001).

\bibitem{Kochubei2023}
A.N. Kochubei, \emph{The Vladimirov-Taibleson operator: Inequalities,
Dirichlet problem, boundary Hölder regularity}, J. Pseudo-Diﬀer. Oper.
Appl., \textbf{14} (2023), 31.

\bibitem{Kozyrev2004}
S.V. Kozyrev, \emph{$p$-Adic pseudodiﬀerential operators and p-adic wavelets}.
Theoretical and Mathematical Physics, \textbf{138} (2004), 322--332.

\bibitem{AngelDiss}
Á.M. Ledezma, \emph{Ultrametric Spaces: Spectral and Stochastic
Methods with Applications in Science},
submitted PhD thesis, Karlsruhe Institute of Technology (2026).

\bibitem{PRSW2024}
T. Pierce, R. Rajkumar, A. Stine, D. Weisbart, and A.M. Yassine. \emph{Brownian motion in a vector space over a local field is a scaling limit}. Expositiones Mathematicae, 42(6):125607 (2024).

\bibitem{PW2025}
T. Pierce and D. Weisbart. \emph{Brownian motion in the p-adic integers is a
limit of discrete time random walks}. J Stat Phys, 192:104 (2025).

\bibitem{RZ2008}
J.J. Rodr\'{\i}guez-Vega and W.A. Z\'u\~niga-Galindo.
\emph{Taibleson operators, $p$-adic parabolic equations and ultrametric diffusion},
Pacific Journal of Mathematics,
\textbf{237} (2), 
(2008).

\bibitem{Schneider2011}
P. Schneider, \emph{$p$-Adic Lie Groups}. Grundlehren der mathematischen Wissenschaften 344. Springer-Verlag, Berlin Heidelberg (2011)

\bibitem{Serre1965}
J.-P. Serre, \emph{Classiﬁcation des variétés analytiques $p$-adiques compactes}.
Topology, {\bf 3} (1965),409--412.

\bibitem{Serre1992}
J.-P. Serre, \emph{Lie Algebras and Lie Groups. Lectures given at Harvard Uni-
versity}. Lecture Notes in Mathematics 1500. Springer (1992).

\bibitem{Taira1998}
K. Taira. \emph{Brownian motion and index formulas for the de {Rham} complex},
{Mathematical Research 106},
{Berlin},
{Wiley-VCH},
{215 p.},
(1998)


\bibitem{vanKampen2007}
N.G. van Kampen. \emph{Stochastic Processes in Physics and Chemistry}. North-Holland Personal Library, 3rd edition, (2007)

\bibitem{RodriguezDiss}
J.P. Velasquez-Rodriguez.
\emph{Análisis armónico en grupos pro-finitos},  PhD Thesis,
Universidad del Valle (2025)

\bibitem{Rodriguez2025}
J.P. Velasquez-Rodriguez.
\emph{Unitary dual and matrix coefficients of compact nilpotent
$p$-adic Lie groups with dimension $d \le 5$},
Bol. Soc. Mat. Mex., \textbf{31} (2025), 37.

\bibitem{VVZ1994}
Vladimirov V.S., Volovich I.V., and Zelenov E.I. \emph{$p$-adic Analysis and
mathematical physics}. Series on Soviet and East European Mathematics,
1. World Scientific Publishing Co., Inc., River Edge, NJ, (1994).

\bibitem{Weil1982}
A. Weil, \emph{Adeles and Algebraic Groups}. Progress in Mathematics 23.
Birkhäuser, Boston (1982).

\bibitem{Zuniga2025}
W.A. Zúñiga-Galindo, \emph{$p$-Adic Analysis: Stochastic Processes and Pseudo-
Diﬀerential Equations}. De Gruyter, Berlin (2025)
\end{thebibliography}
\end{document}